\documentclass[letterpaper, 12pt]{article}
\usepackage[left=1in,top=1in,right=1in,bottom=1in]{geometry}

\usepackage[activeacute, english]{babel}
\usepackage[utf8]{inputenc}
\usepackage[T1]{fontenc}

\usepackage{lmodern}

\usepackage{amsmath,amssymb,amsfonts,amsthm}
\usepackage{colonequals}

\usepackage{cite}
\usepackage{cleveref}

\usepackage{enumitem}
\setlist[enumerate]{topsep=1pt,itemsep=-1ex,partopsep=1ex,parsep=1ex}

\theoremstyle{plain}
	\newtheorem{theorem}{Theorem}
	\newtheorem{conjecture}[theorem]{Conjecture}
	\newtheorem{lemma}[theorem]{Lemma}
	\newtheorem{corollary}[theorem]{Corollary}
\theoremstyle{definition}
	\newtheorem{definition}{Definition}


\title{Stability and Ramsey numbers for cycles and wheels}
\author{Nicolás Sanhueza}
\date{}

\begin{document}
\begin{center}
\textbf{Stability and Ramsey numbers for cycles and wheels} \\
	$ $ \\
	Nicolás Sanhueza \\
	\verb|nsanhueza@dim.uchile.cl| \\
	\emph{Departamento de Ingeniería Matemática, Universidad de Chile} \\
	$ $
\end{center}

\begin{abstract}
We study the structure of red-blue edge colorings of complete graphs, with no copies of the $n$-cycle $C_n$ in red, and no copies of the $n$-wheel $W_n = C_n \ast K_1$ in blue, for an odd integer $n$. Our first main result is that in any such coloring, deleting at most two vertices we obtain a vertex-partition of $G$ into three sets such that the edges inside the partition classes are red, and edges between partition classes are blue. As a second result, we obtain bounds for the Ramsey numbers of $r(C_{2k+1},W_{2j})$ for $k < j$ integers, which asymptotically confirm the values of $4j+1$, as it were conjectured by Zhang et al.
\end{abstract}

\section{Introduction}

We study the structure of red-blue edge-colorings in complete graphs, which avoid certain monochromatic subgraphs. More concretely, we consider the case of odd positive integer $n$, and the forbidden monochromatic graphs given by the red $n$-cycle $C_n$ and the blue $n$-wheel $W_n := C_n \ast K_1$. Our main result is the following:

\begin{theorem} \label{theorem:mainresult}
Let $k \ge 6$ and $N \ge 5k+3$. Suppose $G := K_N$ has a red-blue coloring of its edges in a way such that $C_{2k+1}$ is not a red subgraph of $G$ and $W_{2k+1}$ is not a blue subgraph of $G$. Then, there is a partition of $V(G)$ given by $\{ U_0, U_1, U_2, U_3 \}$ such that $|U_0| \leq 2$, $|U_i| \leq 2k$ for $1 \leq i \leq 3$; and every edge in $G - U_0$ inside the partition classes $\{U_1, U_2, U_3\}$ is red, and blue otherwise.

\end{theorem}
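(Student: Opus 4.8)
The plan is to reduce everything to a single structural claim about the red graph $R$: after deleting a set $U_0$ of at most two vertices, $R - U_0$ is a disjoint union of at most three cliques, each of order at most $2k$. This suffices: between two distinct red cliques every edge is blue, so the blue graph on $G - U_0$ is complete tripartite and hence has bipartite neighbourhoods, ruling out a blue $W_{2k+1}$; a red clique of order at least $2k+1$ would already contain a red $C_{2k+1}$, so each clique has order at most $2k$; and taking $U_1,U_2,U_3$ to be these cliques (padded by empty sets if there are fewer than three) yields the desired partition with all the stated size bounds.

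First I would bound the blue degrees. For any vertex $v$ the blue neighbourhood $N_B(v)$ carries a red--blue colouring with no red $C_{2k+1}$; moreover a blue $C_{2k+1}$ inside $N_B(v)$ would form, together with the hub $v$, a blue $W_{2k+1}$, so $N_B(v)$ contains no blue $C_{2k+1}$ either. Since $k \ge 6$, the classical value $r(C_{2k+1},C_{2k+1}) = 4k+1$ forces $|N_B(v)| \le 4k$, whence $R$ has minimum degree at least $N-1-4k \ge k+2$ and every component of $R$ has at least $N-4k \ge k+3$ vertices. The bound of three parts is then enforced by a wheel argument: if four pairwise-disjoint red cliques (or components) of order at least $k$ were present, I would pick a hub $v$ in one of them and three others $A,B,C$; all edges inside $A \cup B \cup C$ and from $v$ to $A\cup B\cup C$ are blue, and since $|A|,|B|,|C|\ge k = \alpha(C_{2k+1})$ the complete tripartite blue graph on $A\cup B\cup C$ contains a blue $C_{2k+1}$ (using at most $k$ vertices from each part), producing a blue $W_{2k+1}$ -- a contradiction.

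The core of the argument is to show that each red component is, up to a bounded number of deletions, a clique of order at most $2k$. Fix a component $X$; every $x\in X$ sends all of its red edges inside $X$ and all of its $N-|X|$ cross-component edges in blue, so the red degree of $x$ inside $X$ is still at least $N-1-4k \ge k+2$, while $x$ misses at most $|X|-(N-4k)$ vertices of $X$. I would then take a longest red path in $X$ and run the rotation--extension method: both endpoints have all of their (many) red neighbours on the path, and the standard weakly-pancyclic/Erd\H{o}s--Gallai machinery produces red cycles of every length throughout an interval reaching up to $2(N-1-4k)\ge 2k+4$. Unless $X$ is bipartite or fails to be suitably connected, this interval contains $2k+1$ and we obtain a forbidden red $C_{2k+1}$; and a red-bipartite component has each side inducing a blue clique, so a side of order at least $2k+2$ would contain a blue $C_{2k+1}$ attached to a hub in another class, again a blue $W_{2k+1}$, while bounding both sides this way contradicts $N\ge 5k+3$. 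This forces $|X|\le 2k$ together with near-completeness of $X$. I expect this to be the main obstacle: when $N$ is close to $5k+3$ the ratio $\delta(R)/N$ sits near the chromatic threshold of $C_{2k+1}$, so neither a clean bipartiteness dichotomy nor a common-neighbourhood argument is available, and the cycle-length analysis must be carried out at its exact threshold.

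Finally I would gather the exceptional vertices -- those carrying an internal blue edge of a near-clique, or joining two cliques in red -- into the set $U_0$ and show $|U_0|\le 2$. The mechanism is again extremal: three such defect vertices, in the presence of red degrees at least $N-1-4k$, can always be used to splice two red near-cliques together along the path structure into a red $(2k+1)$-cycle, or to expose a blue $W_{2k+1}$, so at most two can survive. After deleting $U_0$, each surviving red vertex keeps red degree at least $N-3-4k$ inside its own clique, so every resulting clique has at least $k+1 \ge k$ vertices; by the wheel argument of the second paragraph there are then at most three of them, and by the first paragraph each has order at most $2k$. Setting $U_1,U_2,U_3$ to be these cliques completes the proof.
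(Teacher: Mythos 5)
Your reduction to the statement ``$R-U_0$ is a disjoint union of at most three cliques'' is the right target, and your opening steps are sound: $\Delta^B(G)\le 4k$ via $r(C_{2k+1},C_{2k+1})=4k+1$ applied inside a blue neighbourhood, hence $\delta^R(G)\ge N-1-4k\ge k+2$, and the $K_{k,k,1}\supseteq C_{2k+1}$ wheel argument correctly caps the number of red components at three. But the core step --- that each red component is, after bounded deletions, a clique of order at most $2k$ --- is not proved, and the machinery you invoke cannot prove it. Weak pancyclicity results (Bondy, Brandt, Brandt--Faudree--Goddard) need minimum degree at least roughly $n/4$ \emph{and} 2-connectivity, whereas here $\delta^R$ can be as small as $k+2$ while a component may have up to $6k$ vertices (the paper notes $N\le 6k$ via $r(C_{2k+1},W_{2k+1})=6k+1$), a ratio near $1/6$; the paper even records the glued-$K_{m,m}$ example showing no such theorem holds below ratio $1/4$. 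Worse, the genuinely hard configuration is a concrete counterexample to your cycle-interval claim: two red $2k$-cliques sharing a single cut vertex is non-bipartite, has red degree about half its order, yet contains no cycle longer than $2k$, so rotation--extension produces nothing beyond $C_{2k}$. Your escape clause ``unless $X$ fails to be suitably connected'' is exactly where the theorem lives: this is the paper's Case B ($S=\{s\}\subseteq W_3$ a red cut vertex, $S=X_3$), and handling it occupies most of the paper's argument. Likewise your final paragraph, asserting that three ``defect vertices'' can ``always be used to splice'' a red $C_{2k+1}$, is a claim, not a construction; it silently contains the content of \Cref{lemma:nodisjointw1w2redpaths}, the Menger separator analysis, and both cases of \Cref{lemma:fuckinglemma}, where one must manufacture red cycles of \emph{every} length in an interval hitting $2k+1$ via the hedgehog path-splicing of \Cref{lemma:joininghedgehogbylongpaths} and \Cref{corollary:nodisjointrededges}.

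There is also a local error: in the red-bipartite-component case you bound each side by about $2k+1$ and claim this ``contradicts $N\ge 5k+3$,'' but that only follows if the bipartite component is the whole graph; with up to three components the count is consistent, and ruling out such a component requires building a blue $C_{2k+1}$ from one side together with outside vertices and finding a hub inside the same side (a fixable but missing argument, analogous to the paper's \Cref{lemma:hrnobiparito}). Note finally that the paper does not work from degree conditions at all: it bootstraps from a blue $W_{2k+2}$ guaranteed by $r(C_{2k+1},W_{2k+2})\le 5k+3$ (\Cref{theorem:secondaryresult}, whose proof is Section 4's own substantial argument), applies the Nikiforov--Schelp stability theorem (\Cref{lemma:partitionofhamiltoniangraph}) to the rim to seed two red cliques of size $k+1$, and then grows a maximal blue-connected triple $(X_1,X_2,X_3)$. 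Your proposal has no counterpart to this bootstrap, and at the stated densities a purely degree-based substitute provably does not exist; so the gap is structural, not a matter of filling in routine details.
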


A similar result was obtained by Nikiforov and Schelp \cite{nikiforov-schelp}, considering the case where the forbidden monochromatic subgraphs are odd cycles. More precisely, they proved that given $k \ge 2$ and $N \ge 3k+2$, if $G := K_N$ has a red-blue coloring of its edges in a way such that $C_{2k+1}$ is neither a red nor a blue subgraph of $G$, then there is a partition of $V(G)$ given by $\{ U_0, U_1, U_2 \}$ such that $|U_0| \leq 1$ and the edges inside the partition classes $U_1$ and $U_2$ have one color; and are colored with the remaining color otherwise.



Our proof of \Cref{theorem:mainresult} depends on certain bounds on asymmetric Ramsey numbers. We focus on the case where $G$ is the $n$-cycle $C_n$, and $H$ is the $m$-wheel $W_m$, for $n, m$ integers. Some Ramsey numbers of $C_n$ and $W_m$ are known (\hspace{1sp}\cite{chen-cheng-miao-ng}, \cite{chen-cheng-ng-zhang}, \cite{zhang-zhang-chen}), depending on the parity of $n$ and $m$ and their relative size. In particular, it is known that \[ r(C_n, W_m) = \left\lbrace \begin{array}{ll}
2n - 1 & \text{\emph{for even $m$, with $m \ge 4$, $n \ge 3m/2 - 1$,}} \\
3n - 2 & \text{\emph{for odd $m$, with $n \ge m \ge 3$, $(n,m) \neq (3,3)$,}} \\
2m + 1 & \text{\emph{for odd $n$, with $m \ge 3(n-1)/2$, $(n,m) \neq (3,3), (n,m) \neq (3,4)$,}} \\
3n - 2 & \text{\emph{for odd $n$ and $m$; with $n < m \leq 3(n-1)/2$.}} \\
\end{array} \right. \] Notice that $r(C_n, W_m)$ is not known for odd $n$ and even $m$ with $n<m< 3(n-1)/2$. Zhang et al. \cite{zhang-zhang-chen} raised a conjecture concerning these values.

\begin{conjecture}[Zhang et al. \cite{zhang-zhang-chen}] \label{conjecture:zhang}
Let $n<m$ integers, with $n$ odd and $m$ even. Then $r(C_n, W_m) = 2m+1$.
\end{conjecture}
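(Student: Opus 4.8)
The plan is to establish the two matching bounds $r(C_n,W_m)\ge 2m+1$ and $r(C_n,W_m)\le 2m+1$ separately; the lower bound is routine and the upper bound carries all of the difficulty. For the lower bound I would colour $K_{2m}$ as follows: split the vertex set into two classes $A,B$ with $|A|=|B|=m$, colour every edge inside $A$ or inside $B$ blue, and every edge between $A$ and $B$ red. The red graph is then $K_{m,m}$, which is bipartite and hence contains no odd cycle, so there is no red $C_n$ (this is the only place the parity of $n$ is used). The blue graph is $K_m \sqcup K_m$; since $W_m$ is connected on $m+1$ vertices whereas each blue component has only $m$ vertices, no blue $W_m$ can occur. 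This gives a good colouring of $K_{2m}$, hence $r(C_n,W_m)>2m$.

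For the upper bound, take an arbitrary colouring of $K_{2m+1}$ with no blue $W_m$ and try to exhibit a red $C_n$; write $R$ and $B$ for the red and blue graphs and set $N=2m+1$. I would first dispose of the bipartite case: if $R$ is bipartite, then one of its two classes has at least $m+1$ vertices, so $B$ contains a clique on $\ge m+1$ vertices, and since $K_{m+1}\supseteq W_m$ this produces a blue $W_m$, a contradiction. So $R$ is non-bipartite. I would then use a weak-pancyclicity engine as the source of a cycle of the exact length $n$: by the theorem of Brandt, Faudree and Goddard, every $2$-connected non-bipartite graph of order $N$ with $\delta\ge (N+2)/3$ contains a cycle of each length between its girth and its circumference. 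High minimum degree makes the girth at most $4$ (so at most $n$, as $n\ge 5$ is odd; the case $n=3$ is treated separately), while Dirac's bound $\mathrm{circ}(R)\ge\min(N,2\delta)$ together with $n<m$ gives circumference $\ge n$; hence $R$ contains $C_n$. Thus the whole upper bound reduces to proving that $R$ is $2$-connected with $\delta(R)\ge (N+2)/3$.

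Establishing that minimum-degree threshold is where I expect the real fight. A vertex $v$ of small red degree has a large blue neighbourhood $S=N_B(v)$, and if $S$ contained a blue $C_m$ then $v$ would be a hub and we would find a blue $W_m$. The naive way to force a blue $C_m$ in $S$ is the cycle--cycle Ramsey number: since $n$ is odd, $r(C_n,C_m)=2m-1$ by Rosta / Faudree--Schelp, so $S$ must contain a blue $C_m$ once $|S|\ge 2m-1$, i.e. only when $v$ has red degree $\le 1$. To reach the threshold $\delta\ge(N+2)/3\approx 2m/3$ one therefore cannot confine the rim of the wheel to a single blue neighbourhood; the wheel must be assembled more globally, using the absence of a red $C_n$ throughout the graph (not just inside $S$) to find a long blue even cycle together with a common blue-neighbour hub. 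This coupling between the red structure (no $C_n$) and the blue structure (a blue $C_m$ plus a hub) is the crux, and the thresholds are genuinely tight in the range $n<m<3(n-1)/2$.

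Consequently I expect this method to deliver the value $2m+1$ only up to lower-order error, i.e. an upper bound of the form $2m+o(m)$ (equivalently $4j+o(j)$), which asymptotically confirms the conjecture; the slack is exactly the gap between the minimum degree one can actually guarantee by global wheel-building and the sharp pancyclicity threshold $(N+2)/3$. Closing that gap to obtain the exact value $2m+1$ is the hard part, and I would attack it by a finer analysis of the near-extremal colourings --- those close to the $K_{m,m}$ construction above --- perhaps bootstrapping from a stability statement in the spirit of \Cref{theorem:mainresult}.
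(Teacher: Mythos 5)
You are attempting to prove a statement that the paper itself does not prove: \Cref{conjecture:zhang} is stated as an open conjecture, and the paper only confirms it asymptotically via \Cref{theorem:secondaryresult}, whose bounds $r(C_{2k+1},W_{2j})\leq \frac{9}{2}j+1$ and $r(C_{2k+1},W_{2j})\leq 4j+334$ fall short of the conjectured $4j+1=2m+1$. Judged against the statement, your proposal has a genuine and (to your credit, self-acknowledged) gap: only the lower bound $r(C_n,W_m)\geq 2m+1$ is actually proved, via the standard two-class colouring, and that part is correct. The upper bound is reduced to showing that the red graph on $N=2m+1$ vertices is $2$-connected with $\delta(R)\geq (N+2)/3$, but this reduction is never discharged, and it cannot be by the tools you invoke: the naive step via $r(C_n,C_m)=2m-1$ only forces a blue $C_m$ in a blue neighbourhood of size $\geq 2m-1$, i.e.\ it only handles vertices of red degree at most $1$, leaving an unbridged chasm up to the threshold $\approx 2m/3$. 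Your own conclusion --- that the method yields only $2m+o(m)$ --- concedes that the statement as posed is not established.

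It is worth noting how close your skeleton is to what the paper actually does for its weaker \Cref{theorem:generalsecondaryresult}, and where it diverges. The paper escapes the degree-threshold trap by working on $N\geq (3j+\beta)/(1-\alpha)$ vertices rather than $2m+1$: then either $\delta^R(G)\geq \alpha N+\beta$, in which case $2$-connectivity of $G^R$ (\Cref{lemma:gr2conexo}), non-bipartiteness, the admissible-pair property and Dirac's bound (\Cref{lemma:dirac}) force a red $C_{2k+1}$; or some vertex $w$ has blue degree at least $3j$ (\Cref{lemma:highbluedegree}), and the entire second half of the argument takes place inside $H=N^B(w)$ --- showing $H^R$ and $H^B$ are both non-bipartite, applying Brandt's edge-count theorem (\Cref{lemma:brandt}) to make $H^B$ weakly pancyclic, proving $\delta^B(H)\geq j$ (\Cref{lemma:gradominimohb}), deducing via Dirac that $H^B$ cannot be $2$-connected (\Cref{lemma:hbno2conexo}), and extracting a red $C_{2k+1}$ from the resulting cut structure. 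That inside-$H$ two-colour analysis is exactly the ``global wheel-building'' you identify as the crux but do not carry out; and even with it, the additive constant $\beta=250$ inherited from \Cref{lemma:brandt2} caps the method at $4j+334$, which is why the exact value $2m+1$ remains open both for you and for the paper.
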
 From now on, suppose $n<m$ are integers, with $n=2k+1$ and $m=2j$. We confirm the previous conjecture asymptotically in terms of $j$.

\begin{theorem} \label{theorem:secondaryresult}
Let $2 < k < j$ be integers. We have the following bounds for the Ramsey number of the $(2k+1)$-cycle versus the $2j$-wheel.
\begin{enumerate}[label=\normalfont{(\alph*)}]
\item If $k \ge 3$, then $r(C_{2k+1}, W_{2j}) \leq \frac{9}{2}j+1$.
\item If $k \ge 3$, then $r(C_{2k+1}, W_{2j}) \leq 4j + 334$.
\end{enumerate}
\end{theorem}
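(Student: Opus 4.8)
The plan is to argue by contraposition: assume $G = K_N$ is red-blue colored with no red $C_{2k+1}$ and no blue $W_{2j}$, and bound $N$ from above, targeting $N \le \tfrac{9}{2}j$ for (a) and $N \le 4j + 333$ for (b). Write $R$ and $B$ for the red and blue spanning subgraphs. Two reductions set up the structure. First, for every vertex $v$ the coloring induced on the blue neighbourhood $N_B(v)$ contains no red $C_{2k+1}$ (by hypothesis) and no blue $C_{2j}$ (such a cycle together with the hub $v$ would form a blue $W_{2j}$); hence $|N_B(v)| < r(C_{2k+1}, C_{2j})$. Using the classical value $r(C_{2k+1}, C_{2j}) = 4j - 1$ (Faudree--Schelp and Rosta; valid since $5 \le 2k+1 < 2j$), we obtain $d_B(v) \le 4j - 2$ for all $v$, equivalently the red minimum degree satisfies $\delta(R) \ge N - 4j + 1$. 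Second, if $R$ contains an independent set $I$ with $|I| = 2j+1$, then $I$ is a blue clique: any $h \in I$ serves as a hub and the remaining $2j$ vertices carry a blue $C_{2j}$, producing a blue $W_{2j}$. So I may assume $\alpha(R) \le 2j$ (and trivially $\omega(R) \le 2k$, since $K_{2k+1} \supseteq C_{2k+1}$).

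The remaining task is purely structural: bound $N$ for a $C_{2k+1}$-free graph $R$ with $\delta(R) \ge N - 4j + 1$ and $\alpha(R) \le 2j$. If $R$ is bipartite, its larger part is a red-independent set of size at least $\lceil N/2 \rceil$, so $\lceil N/2 \rceil \le 2j$ and $N \le 4j$, which already beats both target bounds. The substance is therefore the non-bipartite case. Here I would invoke the stability theory of $C_{2k+1}$-free graphs with large minimum degree: once $\delta(R)$ exceeds the Andrásfai--Erdős--Sós-type threshold $\tfrac{2N}{2k+3}$ the graph is bipartite, so in the non-bipartite case $\delta(R) \le \tfrac{2N}{2k+3}$, and moreover $R$ is close to a balanced blow-up of an odd cycle $C_{2\ell+1}$ with $\ell \ge k+1$. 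Such blow-ups, and their near-extremal perturbations, have independence number exceeding $N/2$ up to a controlled error; feeding this into $\alpha(R) \le 2j$ caps $N$. For part (a) a crude accounting of the error suffices to reach $N \le \tfrac 92 j$, whereas for part (b) an explicit, constant-precision version of the same stability statement---of the kind underlying \Cref{theorem:mainresult}---is needed, and the additive constant $334$ is exactly the budget absorbed by this defect analysis.

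The main obstacle is this non-bipartite analysis. The difficulty is that the minimum degree we can extract, $\delta(R) \ge N - 4j + 1$, is only a small fraction of $N$ when $N$ is near $\tfrac 92 j$ (ratio about $\tfrac 19$ at $k = 3$), far below the thresholds of classical pancyclicity or weak-pancyclicity results; so one cannot simply force the forbidden cycle $C_{2k+1}$ directly. Instead one must exploit both that $C_{2k+1}$ is a single prescribed odd length---so that blow-ups of longer odd cycles are the genuine extremal obstructions---and the independence constraint $\alpha(R) \le 2j$, and, for part (b), track every constant through the stability argument to land at $4j + 334$ rather than merely $4j + o(j)$.
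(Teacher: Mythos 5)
Your two opening reductions are sound: bounding blue degrees via $r(C_{2k+1},C_{2j})=4j-1$ (Faudree--Schelp, correctly applied since $2k+1<2j$) to get $\delta(G^R)\ge N-4j+1$, and observing $\alpha(G^R)\le 2j$ because a red-independent set of size $2j+1$ is a blue $K_{2j+1}\supseteq W_{2j}$. The bipartite case is also fine. But the entire non-bipartite case --- which you yourself identify as ``the substance'' --- rests on a stability theorem that does not exist in the form you invoke. The Andr\'asfai--Erd\H{o}s--S\'os-type statement ``$\delta(R)>\frac{2N}{2k+3}$ forces bipartiteness, and below that threshold $R$ is close to a blow-up of an odd cycle $C_{2\ell+1}$, $\ell\ge k+1$'' is a theorem about graphs of odd girth at least $2k+3$, i.e.\ with \emph{all} odd cycles of length at most $2k+1$ forbidden. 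Your graph $R$ is only $C_{2k+1}$-free: it may contain triangles, pentagons, etc., and then the blow-up dichotomy fails. (Already for $k=3$ the Petersen graph is $C_7$-free, non-bipartite, with $\delta=\frac{3n}{10}>\frac{2n}{9}$.) Worse, even if such a theorem held, you could not apply it: as you concede, in the critical regime $N\approx\frac92 j$ your minimum degree is only $\delta(R)\ge N-4j+1\approx N/9$, well below the threshold $\frac{2N}{2k+3}$ you quote, and at densities that low no blow-up stability is available --- compare the triangle-free setting, where structure already breaks down below $n/3$ (Hajnal's examples of triangle-free graphs with minimum degree just under $n/3$ and unbounded chromatic number). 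Finally, the constants are asserted rather than derived: nothing in the sketch produces $\frac92 j+1$, and the claim that $334$ is ``exactly the budget absorbed by this defect analysis'' is a rationalization with no computation behind it.

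For contrast, the paper obtains both bounds from a single inequality, $r(C_{2k+1},W_{2j})\le\frac{3j+\beta}{1-\alpha}$ for any ``admissible pair'' $(\alpha,\beta)$, where admissibility is a weak-pancyclicity hypothesis (every $2$-connected non-bipartite graph with $\delta\ge\alpha n+\beta$ contains all cycle lengths from $6$ up to its circumference). The constants then come from the Brandt--Faudree--Goddard thresholds: the pair $\bigl(\frac13,\frac23\bigr)$ yields $\frac92 j+1$ and the pair $\bigl(\frac14,250\bigr)$ yields $4j+\frac{1000}{3}\le 4j+334$ --- this is where $334$ actually comes from. The proof itself never needs blow-up stability: if $\Delta^B(G)<(1-\alpha)|G|-(\beta+1)$ then $G^R$ is $2$-connected and non-bipartite, so admissibility plus Dirac's circumference bound forces a red $C_{2k+1}$; hence some vertex $w$ has $|N^B(w)|\ge 3j$, and the contradiction is then extracted \emph{inside} $H=N^B(w)$ (which can host no red $C_{2k+1}$ and no blue $C_{2j}$) via $r(C_{2j},C_{2j})=3j-1$, non-bipartiteness of both $H^R$ and $H^B$, Brandt's weak-pancyclicity, $\delta^B(H)\ge j$, and a cut-vertex analysis of $H^B$. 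If you want to salvage your sketch, the workable move is to redirect your degree bound into an analysis of a largest blue neighbourhood along these lines, rather than hoping for extremal structure of $C_{2k+1}$-free graphs at density $N/9$.
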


In particular, we will make use of the upper bounds of $r(C_{2k+1}, W_{2k+2})$ for our proof of \Cref{theorem:mainresult}. Both bounds of \Cref{theorem:secondaryresult} follow from a more general type of bound that we state and prove in \Cref{section:proofofsecondarytheorem}.

\section{Preliminaries}

We fix a little bit of notation. For every graph $G$, we write $|G|$ and $\Vert G \Vert$ for its number of vertices and edges respectively. The \emph{length} of a path $P$ is $\Vert P \Vert$, its number of edges. For disjoint sets of vertices $A$ and $B$, an \emph{$(A,B)$-path} is a path with one endpoint in $A$, the other in $B$ and no other vertices in $A \cup B$.

Given a red-blue coloring of the edges of a graph $G$, let $G^R$ be the graph on $V(G)$ only containing the red-colored edges, similarly define $G^B$ as the graph on $V(G)$ only containing the blue-colored edges. Let $E^R(G)$ and $E^B(G)$ be the set of edges of $G^R$ and $G^B$, respectively.


\begin{definition}
Let $G$ be a graph. A \emph{hedgehog} is a tuple $(W,X)$ where $X \subseteq W \subseteq V(G)$, $X$ induces a complete subgraph and the edges in $E(W \setminus X, X)$ induce a complete bipartite subgraph.
\end{definition}

The notion of hedgehogs will be useful in the proof of \Cref{theorem:mainresult}. The main property of hedgehogs is that every pair of vertices can be joined by paths of various lengths, and that allows us to find cycles of various sizes.

\begin{lemma}\label{lemma:joininghedgehogbylongpaths}
Let $(W, X)$ a hedgehog. Then every pair of distinct vertices in $W$ can be joined by red paths of every length between $2$ and $|X|-1$.
\end{lemma}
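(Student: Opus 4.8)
The plan is to route every path through the red clique $X$, using its vertices as the interior. Throughout I read the hedgehog inside the red graph $G^R$, so that the edges spanned by $X$ are red and every vertex of $W \setminus X$ is joined to every vertex of $X$ by a red edge. Fix distinct $u, v \in W$ and a target length $\ell$ with $2 \le \ell \le |X| - 1$. I will exhibit a red path of the form $u, x_1, x_2, \ldots, x_{\ell-1}, v$ whose interior vertices $x_1, \ldots, x_{\ell-1}$ are distinct elements of $X$. Each consecutive pair is then joined by a red edge: either both endpoints lie in $X$ and the edge is red because $X$ induces a red clique, or exactly one endpoint is a spike in $W \setminus X$ while the other lies in $X$, and the edge is red because $E(W \setminus X, X)$ induces a complete bipartite red subgraph.

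The argument splits into three cases according to how many of $u, v$ lie in $X$. If both $u, v \in X$, I choose $x_1, \ldots, x_{\ell-1}$ from $X \setminus \{u, v\}$, which is possible precisely when $\ell - 1 \le |X| - 2$, i.e.\ $\ell \le |X| - 1$; this is exactly the stated range. If exactly one of them, say $v$, lies in $X$ while $u \in W \setminus X$, I take the $x_i$ from $X \setminus \{v\}$, so it suffices that $\ell - 1 \le |X| - 1$. If both $u, v \in W \setminus X$, I take the $x_i$ from all of $X$, so it suffices that $\ell - 1 \le |X|$. In each case the displayed sequence is a genuine path: the interior vertices are chosen distinct, and since the $x_i$ lie in $X$ while any spike endpoint lies in $W \setminus X$, no interior vertex coincides with an endpoint. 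Hence $u$ and $v$ are joined by a red path of length $\ell$ in every case.

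There is no substantial obstacle here; the content is essentially bookkeeping. The only points requiring care are verifying the color of each edge along the path, which is immediate from the two defining properties of a hedgehog, and checking that enough interior vertices of $X$ are available. The tightest count occurs when both endpoints already lie in $X$, since then the endpoints themselves consume two vertices of $X$; this is precisely what forces the upper end of the length range to be $|X| - 1$, matching the statement.
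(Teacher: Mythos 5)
Your proof is correct and follows essentially the same route as the paper's: both arguments route the path through the red clique $X$ and use the complete bipartite red edges in $E(W \setminus X, X)$ to attach any endpoints lying outside $X$. Your version merely makes explicit the case analysis and the vertex count that the paper's terser proof leaves implicit, including the tight case of two endpoints inside $X$ that determines the upper bound $|X|-1$.
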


\begin{proof}
As $X$ induces a complete subgraph, every pair of distinct vertices in $X$ can be joined by paths of every length between $1$ and $|X|-1$. For distinct pair of vertices in $W$, not necessarily contained in $X$, we can use the edges in $E(W \setminus X, X)$ to extend the mentioned paths or to find a pair of length 2 connecting these vertices, and conclude the result.
\end{proof}

\begin{corollary} \label{corollary:nodisjointrededges}
For every $i \in \{1,2\}$, let $(R_i, S_i)$ be hedgehogs in a graph $G$, such that $R_1 \cap R_2 = \emptyset$. Suppose that $\min \{ |S_1|, |S_2| \} \ge 3$ and that there exist two disjoint edges in $E(R_1, R_2)$. Then $G$ contains cycles of every length between $6$ and $|S_1|+|S_2|$.
\end{corollary}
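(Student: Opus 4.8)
The plan is to build each target cycle by splicing together one path through each hedgehog and closing it up with the two disjoint cross edges. Write the two disjoint edges of $E(R_1, R_2)$ as $a_1 a_2$ and $b_1 b_2$, with $a_1, b_1 \in R_1$ and $a_2, b_2 \in R_2$; disjointness forces $a_1 \neq b_1$ and $a_2 \neq b_2$. First I would apply \Cref{lemma:joininghedgehogbylongpaths} to the hedgehog $(R_1, S_1)$ to obtain, for any desired length $\ell_1 \in \{2, \dots, |S_1|-1\}$, a path $P_1$ from $a_1$ to $b_1$ lying entirely inside $R_1$; symmetrically, from $(R_2, S_2)$ I would obtain a path $P_2$ joining $a_2$ and $b_2$ of any length $\ell_2 \in \{2, \dots, |S_2|-1\}$ inside $R_2$. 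Since $R_1 \cap R_2 = \emptyset$, the paths $P_1$ and $P_2$ are vertex-disjoint, so concatenating $P_1$, the edge $b_1 b_2$, the path $P_2$ traversed from $b_2$ to $a_2$, and the edge $a_2 a_1$ produces a single cycle $C$ that uses each vertex at most once.

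The length of this cycle is exactly $\Vert P_1 \Vert + \Vert P_2 \Vert + 2 = \ell_1 + \ell_2 + 2$, the extra $2$ coming from the two cross edges. Here I would invoke the hypothesis $\min\{|S_1|, |S_2|\} \ge 3$, which guarantees that both index sets $\{2, \dots, |S_1|-1\}$ and $\{2, \dots, |S_2|-1\}$ are nonempty contiguous integer intervals. The sum of two nonempty contiguous integer intervals is again a contiguous integer interval, so as $(\ell_1, \ell_2)$ ranges over the product, the quantity $\ell_1 + \ell_2$ attains every integer from $4$ up to $(|S_1|-1) + (|S_2|-1) = |S_1| + |S_2| - 2$. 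Adding the $+2$, the cycle $C$ thus realizes every integer length from $6$ to $|S_1| + |S_2|$, exactly the claimed range.

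The only points needing care — and the closest thing to an obstacle — are two bookkeeping checks. The first is that the spliced object is a genuine cycle with no repeated vertices: this follows from $R_1 \cap R_2 = \emptyset$ together with $a_1 \neq b_1$ and $a_2 \neq b_2$, which ensure that $P_1$ and $P_2$ meet only at the designated endpoints and that the two cross edges attach to four distinct vertices. The second is that the two length-intervals are simultaneously nonempty, which is precisely what the bound $\min\{|S_1|, |S_2|\} \ge 3$ provides. Beyond these I do not expect any subtlety, since the full range of path lengths is delivered directly by \Cref{lemma:joininghedgehogbylongpaths}.
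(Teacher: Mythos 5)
Your proof is correct and takes essentially the same route as the paper's: both apply \Cref{lemma:joininghedgehogbylongpaths} to join the endpoints of the two disjoint cross edges inside each hedgehog by paths of every length between $2$ and $|S_i|-1$, and splice these with the two edges to realize every cycle length from $6$ to $|S_1|+|S_2|$. You merely make explicit the bookkeeping (vertex-disjointness via $R_1 \cap R_2 = \emptyset$ and the interval arithmetic guaranteed by $\min\{|S_1|,|S_2|\} \ge 3$) that the paper's two-line proof leaves implicit.
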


\begin{proof}
Using \Cref{lemma:joininghedgehogbylongpaths} we can join every two vertices in an hedgehog with paths of various lengths. Choosing these vertices to be the endpoints of two disjoint edges in $E(R_1, R_2)$ we find cycles of the desired lengths.
\end{proof}

We shall make use of the values of Ramsey numbers for cycles, which are completely known.

\begin{theorem}[Faudree-Schelp, \cite{faudree-schelp}] \label{lemma:ramseyforcicles}
We have \[ r(C_n, C_m) = \left\lbrace \begin{array}{ll}
6 					& (n,m) \in \{ (3,3), (4,4) \}, \\
2n-1				& \text{$3 \leq m \leq n$, odd $m$, $(n,m) \neq (3,3)$,} \\
n + \frac{m}{2} - 1	& \text{$4 \leq m < n$ both $n, m$ even, $(n,m) \neq (4,4)$,} \\
\max \left\lbrace n + \frac{m}{2} - 1, 2m-1 \right\rbrace & \text{$3 \leq m \leq n$, even $m$ and odd $n$}.
\end{array}  \right. \]
\end{theorem}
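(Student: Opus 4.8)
The plan is to prove both directions: the lower bounds by explicit colorings and the upper bounds by a density-plus-pancyclicity argument. For the lower bounds I would exhibit, for each case, a red-blue coloring of $K_{N-1}$ (with $N$ the claimed value) containing neither a red $C_n$ nor a blue $C_m$. The bound $2n-1$ follows by taking two disjoint vertex classes of size $n-1$, coloring the edges inside each class red and the edges between them blue: the red graph is a disjoint union of two copies of $K_{n-1}$, too small to contain $C_n$, while the blue graph is the bipartite graph $K_{n-1,n-1}$, which has no odd cycle and hence no $C_m$ for odd $m$. Exchanging the roles of the colors gives the bound $2m-1$ when $n$ is odd and $m$ even (two blue copies of $K_{m-1}$, red bipartite). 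The bound $n+m/2-1$ comes from partitioning into a red clique $A$ of size $n-1$ and a set $B$ of size $m/2-1$, coloring every edge meeting $B$ blue: the red graph is $K_{n-1}$ with no $C_n$, and since $A$ is independent in blue, every blue cycle alternates through $B$ and so has length at most $2|B| = m-2 < m$. Taking the maximum of the applicable constructions yields each stated lower bound.

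For the upper bounds, the key is to show that in any $2$-coloring of $K_N$ with $N$ equal to the claimed value, the majority color already forces the corresponding cycle. First I would dispose of the small exceptional values $(n,m)\in\{(3,3),(4,4)\}$ by the classical identity $r(C_3,C_3)=r(K_3,K_3)=6$ and a direct check that $r(C_4,C_4)=6$. For the generic cases I would invoke the theory of \emph{weakly pancyclic} graphs: a $2$-connected non-bipartite graph on $p$ vertices with minimum degree bounded below by roughly $p/4$ contains cycles of every length between its girth (which is then $3$ or $4$) and its circumference. In a coloring of $K_N$, one color graph has at least $\binom{N}{2}/2$ edges, hence contains a dense $2$-connected block; if this block is non-bipartite one obtains a long monochromatic cycle and then, by weak pancyclicity, a cycle of the exact required length, contradicting the hypothesis.

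The substance of the argument is therefore a structural dichotomy on the color that is supposed to avoid its cycle. If the red graph fails to contain $C_n$, then either it has small circumference, in which case its complement (the blue graph) is forced to be very dense and one finds the blue $C_m$, or it is close to one of the extremal configurations of the lower bound, namely a disjoint union of two large cliques or a bipartite-like graph. I would make this precise through a stability analysis: show that the only $C_n$-free red graphs on $N-1$ vertices that are simultaneously dense enough to block a blue $C_m$ are exactly the extremal colorings above, and that passing from $N-1$ to $N$ necessarily creates one of the two forbidden cycles. The parity hypotheses enter precisely here, because the bipartite extremal examples only block target cycles of odd length, which is what separates the four cases and explains the $\max$ in the last line.

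The main obstacle I anticipate is controlling the near-extremal cases in the stability step: when a monochromatic graph is almost bipartite or barely disconnected, the crude edge-counting that drives the generic pancyclicity argument breaks down, and one must track the precise sizes of the parts and the few edges that violate bipartiteness or connect the components in order to argue that the opposite color absorbs the deficiency. Getting the additive constants exactly right, so that $N-1$ admits an extremal coloring but $N$ does not, is the delicate part, and it is where the careful case analysis by parity and relative size of $n$ and $m$ becomes unavoidable.
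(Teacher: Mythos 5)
First, a point of comparison: the paper does not prove this statement at all --- it is imported verbatim from Faudree--Schelp (the case analysis there, and independently in Rosta's work, runs to dozens of pages), so the benchmark for your attempt is the literature proof, not anything internal to the paper. Your lower-bound constructions are correct and standard: the two-red-cliques coloring (blue $K_{n-1,n-1}$ has no odd cycle, red components of order $n-1$ have no $C_n$), its color-swap for the $2m-1$ bound, and the red $K_{n-1}$ plus a blue-dominating set $B$ of size $m/2-1$, where your counting $a\le b\le |B|$ for a blue cycle with $a$ vertices in $A$ and $b$ in $B$ correctly caps blue cycle lengths at $m-2$. These give all the stated values as lower bounds.

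The upper bound, however, is where essentially the entire content of the theorem lives, and your proposal does not prove it --- it announces a plan (``structural dichotomy,'' ``stability analysis,'' ``make this precise'') without carrying out any of it. Two specific steps fail as stated. First, ``one color graph has at least $\binom{N}{2}/2$ edges, hence contains a dense $2$-connected block; if this block is non-bipartite one obtains a long monochromatic cycle'': in the extremal coloring of $K_{2n-2}$ the \emph{majority} color is the blue $K_{n-1,n-1}$, which is bipartite, so non-bipartiteness of the dense color cannot be assumed and the bipartite alternative is not a degenerate fringe case but the heart of the problem. Second, the weakly-pancyclic machinery you invoke (Brandt-type theorems, as quoted in the paper's Section 4) requires \emph{minimum-degree} hypotheses, not edge density; extracting a high-minimum-degree subgraph from a dense graph discards a linearly growing set of vertices, and since the claimed Ramsey values are exact ($2n-1$, $n+m/2-1$, with no additive slack), any such loss destroys the argument --- this is precisely why the paper's own Theorem 7 (\Cref{theorem:secondaryresult}), which does run this kind of argument, only achieves bounds like $4j+334$ rather than exact values. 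Likewise you need the circumference of the majority color to reach the target length, which requires $2$-connectivity plus Dirac-type bounds and an exact treatment of the near-bipartite and near-disconnected configurations; that treatment \emph{is} the Faudree--Schelp case analysis, and deferring it (as your final paragraph candidly does) means the proposal establishes only the easy half of the theorem.
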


\begin{theorem}[Surahmat et al. \cite{surahmat-baskoro-tomescu}] \label{lemma:surahmat} We have that $r(C_{2k+1}, W_{2k+1}) = 6k+1$, for all integers $k \ge 1$.
\end{theorem}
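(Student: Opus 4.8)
The plan is to prove both inequalities separately, writing $n = 2k+1$ and $N = 6k+1 = 3n-2$ throughout. For the lower bound I would exhibit a red-blue coloring of $K_{6k}$ containing neither a red $C_n$ nor a blue $W_n$, thereby showing $r(C_n,W_n) \ge 6k+1$. Partition the $6k$ vertices into three classes $A,B,C$ of size $2k$ each; color every edge inside a class red and every edge between classes blue. The red graph is then a disjoint union of three copies of $K_{2k}$, whose components have only $2k < n$ vertices, so there is no red $C_n$. The blue graph is the complete tripartite graph on $(A,B,C)$. A blue $W_n$ would require a hub $h$ joined in blue to all $n$ rim vertices; if $h \in A$ then its blue neighborhood is exactly $B \cup C$, and the blue edges inside $B \cup C$ run only between $B$ and $C$, forming a bipartite graph. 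As an odd cycle $C_n$ cannot lie in a bipartite graph, no blue $W_n$ exists.

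For the upper bound I would assume a coloring of $K_N$ with no red $C_n$ and no blue $W_n$ and seek a contradiction. The first move is a neighborhood-Ramsey reduction: for every vertex $x$, consider the coloring induced on its blue neighborhood $N_B(x)$. If $|N_B(x)| \ge 2n-1 = r(C_n,C_n)$ (\Cref{lemma:ramseyforcicles}), then $N_B(x)$ contains a monochromatic $C_n$; a blue one makes $x$ the hub of a blue $W_n$, while a red one is a forbidden red $C_n$. Hence $|N_B(x)| \le 2n-2$ for all $x$, so the red degree of every vertex is at least $(N-1)-(2n-2) = n-1$. Moreover $G^R$ cannot be bipartite: otherwise its larger part has size at least $\lceil N/2 \rceil \ge n+1$ and, being red-independent, is a blue clique $K_{n+1} \supseteq W_n$. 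Thus $G^R$ is a non-bipartite graph on $N = 3n-2$ vertices with $\delta(G^R) \ge n-1$ and no $C_n$.

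I would then split according to the minimum red degree. In the generic case $\delta(G^R) \ge n = (N+2)/3$, the non-bipartite red graph meets the hypothesis of a weak-pancyclicity theorem for dense graphs (after checking $2$-connectivity), so it contains cycles of every length from its girth (which is $3$ or $4$) up to its circumference; since $\delta(G^R) \ge n$ forces the circumference past $2n > n$, the red $C_n$ appears, a contradiction. The delicate boundary case is $\delta(G^R) = n-1$, i.e. some $x$ has $|N_B(x)| = 2n-2$. Here $N_B(x)$ is an extremal coloring of $K_{2n-2}$ with no monochromatic $C_n$; invoking the structure of such extremal $r(C_n,C_n)$-colorings, $N_B(x)$ splits into two red cliques $Q_1,Q_2$ of size $n-1$ with all $Q_1$-$Q_2$ edges blue. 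Each $Q_i$ is a hedgehog, so if any vertex sent two red edges into a single $Q_i$, a red Hamilton path of $Q_i$ joining the two red neighbors (\Cref{lemma:joininghedgehogbylongpaths}) would close into a red $C_n$; likewise two disjoint red edges between $Q_1$ and $Q_2$ would yield a red $C_n$ via \Cref{corollary:nodisjointrededges}. Excluding these forces every vertex of $\{x\} \cup N_R(x)$ to be blue to almost all of $Q_1 \cup Q_2$, and since the blue graph on $Q_1 \cup Q_2$ is bipartite, the only way to realize an odd blue rim is through $N_R(x)$, whose rigidified coloring then produces either the red $C_n$ or a blue $W_n$ with hub in $N_R(x)$.

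I expect the main obstacle to be exactly this boundary case, which sits one unit below the weak-pancyclicity threshold $(N+2)/3$ and therefore cannot be dispatched by a degree argument alone; it hinges on the near-uniqueness (stability) of the extremal colorings for $r(C_n,C_n)$ and on a careful bookkeeping of the colors between $N_R(x)$ and the two red cliques. The very small cases ($k \le 2$, where $n < 6$ and the hedgehog corollary does not directly reach length $n$) I would verify by hand, consistent with the stated range $k \ge 1$.
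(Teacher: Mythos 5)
First, note that the paper does not prove this statement at all: it is imported verbatim from Surahmat, Baskoro and Tomescu, so there is no in-paper proof to compare against, and your attempt must be judged on its own. Your lower-bound construction is the standard one and is correct, and your generic case ($\delta(G^R) \ge n$) is sound and even self-repairing on the one point you left open: if $G^R$ had a cutset $S$ with $|S| \le 1$, each component of $G^R - S$ would have at least $n$ and at most $2n-3$ vertices with red minimum degree at least $n-1 > |C_i|/2$ inside it, so \Cref{corollary:bondy2} gives a red pancyclic component containing $C_n$. The genuine gap is in your boundary case $\delta(G^R) = n-1$. You assert that the extremal coloring on $N_B(x) \cong K_{2n-2}$ "splits into two \emph{red} cliques $Q_1, Q_2$ with all crossing edges blue," but the extremal colorings for $r(C_n,C_n)$ come in both colors: $N_B(x)$ may equally split into two \emph{blue} cliques $B_1, B_2$ of size $n-1$ with the crossing edges red (and, in either orientation, up to one stray crossing edge of the clique color, which your phrasing also excludes). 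The blue-clique configuration is not touched by any of your local arguments: $\{x\} \cup B_i$ is a blue $K_n$, so blue copies of $C_n$ abound, but no vertex of $B_{3-i}$ is a candidate hub (it is red to all of $B_i$), the red graph between $B_1$ and $B_2$ is bipartite and hence contains no red $C_n$, and the required contradiction can only come from the $n-1$ vertices of $N_R(x)$. For that hardest step — in both orientations — your proof consists of the sentence "whose rigidified coloring then produces either the red $C_n$ or a blue $W_n$," which is an assertion, not an argument; this is precisely where the extremal example (three red cliques of size $2k$, where every vertex has blue degree exactly $2n-2$) lives, so the case cannot be waved away.

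A second, smaller but real problem concerns the range of $k$. You propose to "verify $k \le 2$ by hand, consistent with the stated range $k \ge 1$," but the statement is in fact \emph{false} at $k=1$: $W_3 = K_4$ and $r(C_3, K_4) = 9 \neq 7$. This is consistent with the paper's own table in the introduction, which gives $r(C_n,W_m) = 3n-2$ for odd $m$ only with the exclusion $(n,m) \neq (3,3)$; the "for all integers $k \ge 1$" in the quoted theorem is a misstatement (harmless to the paper, which only invokes it for $k \ge 6$). Your own machinery signals the same boundary: for $n=3$ one has $r(C_3,C_3)=6=2n$ rather than $2n-1$ by \Cref{lemma:ramseyforcicles}, so your degree bound $|N_B(x)| \le 2n-2$ fails there, and \Cref{corollary:nodisjointrededges} only produces cycles of length at least $6$, so it does not reach $C_5$ at $k=2$ either — that case genuinely needs separate treatment, as you correctly flagged. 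In summary: lower bound and generic case are essentially right, but the color-swapped extremal configuration and the endgame through $N_R(x)$ constitute a missing core of the upper-bound proof, and the theorem should be stated for $k \ge 2$.
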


Next, we need some results on the stability of cycle-forbidding red-blue colorings, as shown by Nikiforov and Schelp \cite{nikiforov-schelp}.

\begin{theorem}[Nikiforov-Schelp, \cite{nikiforov-schelp}] \label{lemma:partitionofhamiltoniangraph}
Let $G$ be a hamiltonian graph of order $2n$ such that $C_{2n-1} \nsubseteq G$ and $C_{2n-1} \nsubseteq \overline{G}$. Then there exists a partition of $V(G)$, $\{U_1, U_2\}$ such that $|U_1| = |U_2| = n$ and $U_1, U_2$ are independent. Moreover, there exists a vertex $u \in V(G)$ such that $G - u \cong K_{n, n-1}$.
\end{theorem}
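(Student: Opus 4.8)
The plan is to use the Hamilton cycle to impose coordinates and then prove two things in turn: first, that $G$ is bipartite with both classes of size $n$; and second, that the resulting bipartite graph is $K_{n,n}$ with a star of edges removed at a single vertex. Fix a Hamilton cycle $C = v_1 v_2 \cdots v_{2n} v_1$ and let $A = \{v_1, v_3, \dots, v_{2n-1}\}$ and $B = \{v_2, v_4, \dots, v_{2n}\}$ be its two parity classes, each of size $n$. The starting observation is that $G$ has no chord $v_i v_{i+2}$: such a chord together with the long arc $v_{i+2} v_{i+3} \cdots v_i$ of $C$ forms a copy of $C_{2n-1}$, since it omits only $v_{i+1}$. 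More generally a single chord $v_i v_j$ creates cycles of lengths $d+1$ and $2n-d+1$, where $d$ is the length of the shorter arc between $v_i$ and $v_j$; hence every chord joining two vertices of the same parity creates an odd cycle, while chords between $A$ and $B$ keep the bipartition intact. Since $G$ is connected, proving bipartiteness is equivalent to showing that no chord of $C$ joins two vertices of the same parity, and once this is done the bipartition $\{A,B\}$ is forced to be the partition $\{U_1,U_2\}$.

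The heart of the argument is the bipartiteness step, which is where both color restrictions must be used simultaneously. I would argue that a non-bipartite Hamiltonian graph on $2n$ vertices either contains $C_{2n-1}$ itself or is so sparse that its complement does. In the dense regime one invokes the classical dichotomy of Bondy (a Hamiltonian graph on $2n$ vertices with at least $n^2$ edges is pancyclic or equal to $K_{n,n}$) together with the weakly-pancyclic refinement for non-bipartite graphs due to Brandt: a sufficiently dense non-bipartite Hamiltonian graph contains cycles of every length up to its circumference $2n$, in particular $C_{2n-1}$, contradicting the hypothesis on $G$. In the remaining regime $G$ is sparse, so $\overline{G}$ is dense, and one produces $C_{2n-1}$ directly in $\overline{G}$; the non-bipartite configurations that survive the condition on $G$, such as $C_{2n}$ together with a single same-parity chord, all have very dense complements that readily contain $C_{2n-1}$. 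I expect this step to be the main obstacle: the two density thresholds do not overlap for free (the complement need not even be Hamiltonian), so making the two regimes meet requires a careful chord-rotation analysis rather than a single edge count.

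Granting bipartiteness, the second step is clean. The classes $A$ and $B$ are independent in $G$ and hence induce cliques in $\overline{G}$, so $(A,A)$ and $(B,B)$ are blue hedgehogs. Suppose $G$ omitted two independent cross-edges $a_1 b_1$ and $a_2 b_2$ with $a_1,a_2 \in A$ and $b_1,b_2 \in B$ distinct. Then $a_1 b_1$ and $a_2 b_2$ are two disjoint edges of $\overline{G}$ between these two hedgehogs, and running the argument of \Cref{corollary:nodisjointrededges} in the blue graph (via \Cref{lemma:joininghedgehogbylongpaths}) yields blue cycles of every length between $6$ and $|A|+|B| = 2n$; in particular $C_{2n-1} \subseteq \overline{G}$, a contradiction. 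Hence all omitted cross-edges share a common endpoint $u$, so every vertex other than $u$ is adjacent in $G$ to the whole opposite class; this says exactly that $G - u \cong K_{n,n-1}$. Finally $\deg_G(u) \ge 2$ because $G$ is Hamiltonian, consistent with this description, and the proof concludes. The only points needing separate attention are the small cases $n \le 3$, where the range of \Cref{corollary:nodisjointrededges} does not yet reach $2n-1$ and the required cycle must instead be exhibited by hand.
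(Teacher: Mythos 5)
This theorem is quoted from Nikiforov--Schelp and the paper does not reproduce its proof, so your attempt can only be judged on its own merits. Its second half is sound: granting that $G$ is bipartite with classes $A,B$ of size $n$, the classes are cliques in $\overline{G}$, two disjoint missing cross-edges would give two disjoint $\overline{G}$-edges between the hedgehogs $(A,A)$ and $(B,B)$, and the argument of \Cref{corollary:nodisjointrededges} (via \Cref{lemma:joininghedgehogbylongpaths}) produces cycles of all lengths $6,\dots,2n$ in $\overline{G}$, in particular $C_{2n-1}$ once $n\ge 4$; hence the missing cross-edges form a star at some $u$ and $G-u\cong K_{n,n-1}$. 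That part, including your remark about $n\le 3$, is correct.

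The genuine gap is exactly where you flag it: the bipartiteness of $G$, which is the heart of the theorem, is only conjectured, and the density dichotomy you propose does not close. Applying \Cref{lemma:brandt} to a non-bipartite Hamiltonian $G$ on $2n$ vertices with $C_{2n-1}\nsubseteq G$ forces $\Vert G \Vert \le (2n-1)^2/4+1$, hence $\Vert \overline{G} \Vert \ge n^2-1$; but an edge count of order $n^2$ on $2n$ vertices gives no lower bound on $c(\overline{G})$, and weak pancyclicity of $\overline{G}$ only yields cycles up to its circumference. Nothing forces $\overline{G}$ to be Hamiltonian, $2$-connected, or of large minimum degree (a vertex of full degree in $G$ is isolated in $\overline{G}$), so $c(\overline{G})\ge 2n-1$ does not follow. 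Even the non-bipartiteness of $\overline{G}$, needed to invoke \Cref{lemma:brandt} on the complement, is not free: if $\overline{G}$ were bipartite then $G$ would contain two cliques covering $V(G)$, giving only $\Vert G\Vert \ge n^2-n$, which sits \emph{below} Brandt's threshold $(2n-1)^2/4+1$, so neither regime of your dichotomy applies to that configuration. What is actually needed is the structural use of the Hamilton cycle that you defer to "a careful chord-rotation analysis": for instance, every chord $v_iv_{i+2}$ must lie in $\overline{G}$ (else a $C_{2n-1}$ in $G$), so $\overline{G}$ already contains two $n$-cycles on the parity classes, and one must then analyze the remaining chords of both colors to exclude same-parity chords of $G$. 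Since that analysis is precisely the content of the Nikiforov--Schelp lemma and is absent from your write-up, the central step of the proposal is missing rather than merely rough.
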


We shall make use of an intermediate result of the same authors, in the same vein.

\begin{lemma}[Nikiforov-Schelp, \cite{nikiforov-schelp}] \label{lemma:prenikiforovschelp}
Let $n = 2k+1 \ge 5$ and $N \ge 3k+2$, and a graph $G := K_N$ with an associated red-blue coloring of its edges $c: E(G) \rightarrow \{R,B\}$ such that $C_{2k+1} \nsubseteq G^R$ and $C_{2k+1} \nsubseteq G^B$. Then there exists a color $C \in \{R,B\}$ and a partition $\{Y_1, Y_2\}$ of $V(G)$ such that $E(Y_i, Y_i) \subseteq E^C(G)$ for all $i \in \{1,2\}$, and there are no disjoint $C$-colored edges in $E(Y_1, Y_2)$.
\end{lemma}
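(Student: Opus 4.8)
The plan is to reformulate the desired conclusion structurally and then isolate the single hard ingredient. Observe that the two conditions ``$E(Y_i,Y_i)\subseteq E^C(G)$'' and ``no disjoint $C$-colored edges in $E(Y_1,Y_2)$'' together say exactly that the opposite color graph $G^{\overline{C}}$ is \emph{bipartite} with parts $Y_1,Y_2$ (the first condition forbids $\overline{C}$-edges inside a class, so all $\overline{C}$-edges run across), and that the $C$-edges across $Y_1,Y_2$ form an intersecting family of edges, i.e.\ a star (the second condition: in a bipartite setting an intersecting family of edges shares one common vertex). Equivalently, one color class is a complete bipartite graph minus a star. So the proof splits into two tasks: (I) produce a color, say $\overline{C}=B$, for which $G^B$ is bipartite; and (II) upgrade this bipartition to the star structure.

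Task (II), assuming (I), is short and runs through the hedgehog machinery. Suppose $G^B$ is bipartite with parts $Y_1,Y_2$. Then no blue edge lies inside a class, so every edge inside $Y_1$ or inside $Y_2$ is red; that is, each $Y_i$ spans a red clique. Since $C_{2k+1}\nsubseteq G^R$, a red clique on $\ge 2k+1$ vertices is impossible, so $|Y_i|\le 2k$; combined with $|Y_1|+|Y_2|=N\ge 3k+2$ this forces $|Y_i|\ge k+2\ge 3$. Now each pair $(Y_i,Y_i)$ is a hedgehog with $|S_i|=|Y_i|\ge 3$ and $Y_1\cap Y_2=\emptyset$. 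If $E(Y_1,Y_2)$ contained two \emph{disjoint} red edges, then \Cref{corollary:nodisjointrededges} would produce red cycles of every length between $6$ and $|Y_1|+|Y_2|=N$; as $6\le 2k+1\le N$ for $k\ge 3$, this yields a red $C_{2k+1}$, a contradiction. Hence the red edges across $Y_1,Y_2$ form a star, and the lemma holds with $C=R$. The boundary case $k=2$, where only $N=8$ survives by the Ramsey bound below, is checked separately since the hedgehog corollary does not reach length $5$.

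Task (I) is the crux. By \Cref{lemma:ramseyforcicles} we have $r(C_{2k+1},C_{2k+1})=4k+1$, so the hypothesis forces $N\le 4k$; thus $3k+2\le N\le 4k$ and, by pigeonhole, the majority color contains at least half of the $\binom{N}{2}$ edges. I would show this majority color graph, say $G^B$, is bipartite by contradiction: a sufficiently \emph{dense non-bipartite} graph on at most $4k$ vertices must contain $C_{2k+1}$. The mechanism is that a dense non-bipartite graph is weakly pancyclic with small odd girth and large circumference, hence realises every cycle length in a long interval that contains $2k+1$ — the same odd-cycle-spectrum phenomenon underlying the value of $r(C_{2k+1},C_{2k+1})$. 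If the majority color were non-bipartite it would therefore contain a monochromatic $C_{2k+1}$, contradicting the hypothesis; so it is bipartite.

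The main obstacle is precisely converting ``$G^B$ has many edges on average'' into the hypotheses such a cycle-spectrum result demands, namely high \emph{minimum} degree, $2$-connectivity, and non-bipartiteness simultaneously. Average density alone does not prevent a few vertices from being almost entirely red, nor the blue edges from splitting into several blobs. I would first clean the graph — iteratively discarding vertices of small blue degree and passing to a $2$-connected non-bipartite block of the remainder — while bounding the loss so that the surviving core still has order and circumference exceeding $2k+1$ and minimum degree past the weak-pancyclicity threshold; the few vertices removed are reabsorbed at the end when the bipartition of the core is extended to all of $V(G)$. Executing this cleaning with the slack afforded by $N\le 4k$ is where the real work lies and is the step I expect to require the most care.
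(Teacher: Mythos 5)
First, note that the paper does not prove this lemma at all --- it is imported verbatim from Nikiforov--Schelp \cite{nikiforov-schelp} --- so your attempt can only be judged on its own merits, and there it has a genuine gap. Your reformulation and Task (II) are fine (and your flagged boundary case $k=2$ is easily closed: since each $Y_i$ spans a red \emph{clique}, you get connecting paths of length $1$, not just $\ge 2$ as in \Cref{corollary:nodisjointrededges}, so two disjoint red crossing edges yield red cycles of every length from $4$ to $N$, covering $2k+1=5$). But Task (I), which you yourself identify as the crux, is left as a sketch, and its organizing claim --- ``the majority color graph is bipartite'' --- is false. Consider the extremal coloring with $N=3k+2$: red $= K_{2k} \mathbin{\dot\cup} K_{k+2}$, blue $=$ the complete bipartite graph between the two cliques. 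This satisfies all hypotheses (no monochromatic $C_{2k+1}$ in either color), yet red has $\binom{2k}{2}+\binom{k+2}{2} \approx \tfrac{5}{2}k^2$ edges against blue's $2k(k+2) \approx 2k^2$, so for $k \ge 7$ the \emph{majority} color is red, which is non-bipartite (it contains large cliques). Your intended mechanism also breaks down on this example: the red graph is dense and non-bipartite, but its $2$-connected blocks are $K_{2k}$ and $K_{k+2}$, each of order at most $2k < 2k+1$, so no amount of ``cleaning'' can produce a $2$-connected non-bipartite core with circumference exceeding $2k+1$, and no contradiction via a monochromatic $C_{2k+1}$ is available --- because this configuration is precisely the (legitimate) extremal structure the lemma is describing.

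What is missing, then, is the actual content of the lemma: a dichotomy argument showing that when the dense color fails to be weakly-pancyclic-with-long-circumference, its edge set must organize into exactly two clique-like blobs with only star-like leakage across, and a reabsorption step extending that structure to all of $V(G)$ while preserving the ``no two disjoint $C$-colored crossing edges'' condition. You acknowledge this is ``where the real work lies,'' but no argument is supplied, and the average-degree-to-minimum-degree cleaning you propose does not by itself rule out the two-blob outcome --- it must instead \emph{analyze} it. In Nikiforov--Schelp's paper this occupies the bulk of the proof; as written, your proposal reduces the lemma to an unproved claim that is in one direction false and in the corrected direction ($\exists$ a bipartite color class \emph{or} the stated star structure) equivalent to the lemma itself.
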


A graph $G$ is \emph{pancyclic} if it contains cycles of every length between 3 and $|V(G)|$. The \emph{girth} of a graph $g(G$) is the length of its shortest cycle, the \emph{circumference} of a graph $c(G)$ is the length of its longest cycle. A graph $G$ is \emph{weakly pancyclic} if it contains cycles of every length between $g(G)$ and $c(G)$. We shall make use of various theorems that assure that, under certain conditions, a graph is pancyclic or weakly pancyclic. The following lemma has a trivial proof.

\begin{lemma} \label{lemma:pancyclicalmostbipartite}
Let $k \ge 1$ and $G$ be a graph on $2k+1$ vertices. Let $\{V_1, V_2\}$ be a partition of $V(G)$ such that $|V_1| = k+1$ and $|V_2| = k$ and $E(V_1, V_2)$ form the edges of a complete bipartite subgraph. If there is an edge $e = \{x,y\} \subseteq V_1$, then $G$ is pancyclic.
\end{lemma}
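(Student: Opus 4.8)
The plan is to split the target cycle lengths by parity and to obtain the two families of cycles from the two available structures: the complete bipartite subgraph spanned by $E(V_1,V_2)$ gives all the even cycles, while the single edge $e = \{x,y\}$ inside $V_1$ lets us close the odd cycles.

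First I would handle the even lengths. Since $E(V_1,V_2)$ spans a complete bipartite subgraph isomorphic to $K_{k+1,k}$ and $|V_2| = k$, for each $s$ with $2 \le s \le k$ I can choose $s$ distinct vertices on each side and alternate them around to obtain a cycle of length $2s$. This already yields cycles of every even length $4, 6, \ldots, 2k$.

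Next I would handle the odd lengths using the edge $e$. For each $s$ with $1 \le s \le k$ I build a path from $x$ to $y$ whose interior alternates between $V_2$ and $V_1$: starting at $x \in V_1$ it visits $s$ distinct vertices of $V_2$ and $s-1$ further distinct vertices of $V_1$, finishing at $y \in V_1$. Such a path uses $s+1 \le k+1$ vertices of $V_1$ and $s \le k$ vertices of $V_2$, so it exists precisely when $1 \le s \le k$; and since both of its endpoints lie in $V_1$, its length is the even number $2s$. Closing this path with the edge $e$ produces a cycle of odd length $2s+1$. As $s$ ranges over $\{1, \ldots, k\}$ this delivers every odd length $3, 5, \ldots, 2k+1$, and together with the even cycles above, every length from $3$ to $2k+1$, which is exactly the definition of $G$ being pancyclic.

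There is essentially no obstacle here, which is why the statement is labelled as trivial; the only point worth a second look is that the vertex counts line up at the extreme length $2k+1$. This is where the hypothesis $|V_1| = k+1$ (rather than $k$) is used: it supplies the one surplus vertex on the larger side needed so that, at $s = k$, both endpoints of the closing edge $e$ can sit in $V_1$ while the alternating path still exhausts all $2k+1$ vertices to give the Hamilton cycle.
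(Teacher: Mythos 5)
Your proof is correct. The paper omits the proof entirely (declaring it trivial), and your parity-split argument --- even cycles of lengths $4,\dotsc,2k$ directly from the complete bipartite subgraph $K_{k+1,k}$, and odd cycles of lengths $3,\dotsc,2k+1$ by closing an alternating $(x,y)$-path with the edge $e$, with the vertex counts checked correctly at the extreme length $2k+1$ --- is exactly the standard argument the authors left implicit.
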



\begin{theorem}[Bondy, \cite{bondy}] \label{lemma:bondy}
Let $G$ be a graph on $n \ge 3$ vertices with minimum degree at least $n/2$. Then $G$ is pancyclic, or $n = 2k$ and $G \cong K_{k,k}$.
\end{theorem}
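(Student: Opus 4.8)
The plan is to first reduce the statement to its edge-counting form and then run the classical chord analysis on a Hamilton cycle. Since $\delta(G) \ge n/2$ is precisely Dirac's condition, $G$ is Hamiltonian; fix a Hamilton cycle $C = v_1 v_2 \cdots v_n v_1$. Moreover the degree bound gives $\Vert G \Vert = \frac12 \sum_{v} \deg(v) \ge \frac12 \cdot n \cdot \frac n2 = \frac{n^2}{4}$. So it suffices to prove the following more general claim: every Hamiltonian graph on $n$ vertices with $\Vert G \Vert \ge n^2/4$ is pancyclic or isomorphic to $K_{n/2,n/2}$. I would establish the theorem in this form, since the degree hypothesis is used only through these two consequences.

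For the core argument I would exploit the chords of $C$. A chord $v_i v_j$ at cyclic distance $d := j-i$ (reduced modulo $n$, with $2 \le d \le n-2$) splits $C$ and produces, together with the two arcs of $C$, cycles of length $d+1$ and $n-d+1$. Hence if for a target length $\ell$ the graph has any chord of distance $\ell-1$ or of distance $n-\ell+1$, then $C_\ell \subseteq G$. This single-chord observation already yields cycles of most lengths, and the lengths it cannot produce force entire classes of distance-$d$ chords to be absent. The heart of the proof is then a counting argument: I would show that if some length $\ell_0$ with $3 \le \ell_0 \le n-1$ is missing, so many chords must be absent that $\Vert G \Vert$ can attain $n^2/4$ only if the present edges form exactly a complete balanced bipartite graph, i.e. $n$ is even and $G \cong K_{n/2,n/2}$. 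To realize the remaining lengths not handled by a lone chord I would combine two chords (or a chord with a short detour along $C$), invoking the edge density to guarantee the needed second chord exists.

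The main obstacle is precisely this counting and structure step: single chords rule out only two distance classes per missing length, which is far from forcing bipartiteness, so the delicate part is to show that the absence of one cycle length propagates, through the two-chord constructions, into the absence of enough edges that $K_{n/2,n/2}$ becomes the unique configuration attaining the threshold $n^2/4$. I would conclude by checking the exceptional case directly: $K_{n/2,n/2}$ is bipartite and hence triangle-free, confirming that it is genuinely not pancyclic and that the dichotomy in the statement is exhaustive.
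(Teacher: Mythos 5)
Note first that the paper offers no proof of this statement: it is imported verbatim from Bondy \cite{bondy}, so your attempt can only be measured against the classical argument. Your reduction is sound and is exactly how the result is classically organized: Dirac gives a Hamilton cycle, the degree bound gives $\Vert G \Vert \ge \tfrac12 \cdot n \cdot \tfrac n2 = n^2/4$, and the statement you reduce to (Hamiltonian plus $\Vert G \Vert \ge n^2/4$ implies pancyclic or $K_{n/2,n/2}$) is precisely Bondy's edge-form theorem. The single-chord observation (a chord at cyclic distance $d$ yields cycles of lengths $d+1$ and $n-d+1$) is also correct.

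The problem is that the step you defer as ``the main obstacle'' is the entire content of the theorem, and your proposal contains no mechanism for it, so as written the plan does not close. The missing idea is a \emph{pairing} of potential chords, not a count of distance classes: if $C_\ell \nsubseteq G$ for some $3 \le \ell \le n-1$, then for all $i,j$ the chords $v_i v_j$ and $v_{i+1} v_{j+\ell-3}$ (indices modulo $n$) cannot both be present, because the chord $v_i v_j$, the arc $v_j v_{j+1} \cdots v_{j+\ell-3}$, the chord $v_{j+\ell-3} v_{i+1}$ and the edge $v_{i+1} v_i$ close a cycle of length exactly $1 + (\ell-3) + 1 + 1 = \ell$ (after checking the arc avoids $v_i, v_{i+1}$). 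This involution on vertex pairs shows at most one edge survives from each pair, and summing over all pairs yields $\Vert G \Vert \le n^2/4$; it is then the \emph{equality analysis} of this pairing that forces the surviving edges to form $K_{n/2,n/2}$, and the case $\ell = n-1$ resists this scheme and is handled separately in Bondy's proof (by induction on $n$). Your own remark that single chords exclude only two distance classes per missing length is accurate, which is exactly why your sketch, lacking the two-chord pairing above, cannot propagate a single missing cycle length into bipartite structure; naming the obstacle does not substitute for the construction that overcomes it, so the attempt is incomplete at its core.
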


\begin{corollary}[Dirac, \cite{dirac}] \label{corollary:bondy2}
Let $G$ be a graph on $n \ge 3$ vertices with minimum degree greater than $n/2$. Then $G$ is pancyclic.
\end{corollary}

\begin{theorem}[Brandt, \cite{brandt}] \label{lemma:brandt}
Let $G$ be a non bipartite graph on $n$ vertices with more than $(n-1)^2/4 + 1$ edges. Then $G$ is weakly pancyclic and contains a triangle.
\end{theorem}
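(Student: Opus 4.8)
The plan is to prove the two conclusions separately: first that $G$ contains a triangle, so that its girth is $3$, and then that its cycle lengths form the entire interval $\{3, 4, \ldots, c(G)\}$, which together with girth $3$ is exactly weak pancyclicity.

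For the triangle I would argue by contradiction, establishing the extremal fact that a \emph{triangle-free} non-bipartite graph on $n$ vertices has at most $(n-1)^2/4 + 1$ edges, in direct conflict with the hypothesis $\Vert G \Vert > (n-1)^2/4 + 1$. Since $G$ is non-bipartite, if it were triangle-free it would contain an odd cycle; let $C$ be a shortest one, of length $2t+1$, and note $2t+1 \ge 5$ because there is no triangle. Minimality forces $C$ to be chordless and, via a shorter-odd-cycle argument, severely restricts how a vertex outside $C$ may attach to it (any two of its neighbours on $C$ are nonadjacent, and an attachment spanning a longer arc would produce a shorter odd cycle). A careful edge count that splits $V(G)$ into $V(C)$ and $V(G)\setminus V(C)$ and optimises over $t$ then yields the maximum $(n-1)^2/4+1$, attained at $t=2$ (the pentagon case). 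This is the step I would write out in full, since the bound is tight and matches the threshold in the statement.

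For weak pancyclicity, fix a longest cycle $C$ of length $c = c(G)$; it suffices to produce a cycle of length $\ell-1$ whenever $4 \le \ell \le c$ and an $\ell$-cycle is present, because descending from $c$ then delivers every length down to the triangle. Given an $\ell$-cycle I would seek either a chord or a short detour through a vertex off the cycle that lowers the length by exactly one: a chord joining two vertices at distance $2$ along the cycle replaces an arc of length $2$ by the single chord and thus yields a cycle of length $\ell-1$, while an external vertex with two well-placed neighbours can likewise trade a length-$2$ segment for a length-$1$ one. The density hypothesis is precisely what guarantees that such chords or crossing edges exist: a graph with more than $(n-1)^2/4+1$ edges cannot contain a long cycle that is essentially chordless while the remaining vertices attach to it too sparsely to reroute. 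The almost-bipartite boundary configurations forced by the non-bipartiteness assumption I would dispatch using \Cref{lemma:pancyclicalmostbipartite}.

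The main obstacle is the parity of the length-reduction step. Shortening a cycle by $2$ through a chord is routine, but passing from length $\ell$ to length $\ell-1$ changes parity and cannot be realised inside a bipartite-like chord structure; this is exactly where non-bipartiteness must enter, by supplying an odd cycle—ultimately the triangle from the first part—that can be spliced into a long even cycle to correct the parity. A second manifestation of the same difficulty is the Hamiltonian case $c=n$: here there are no outside vertices to reroute through, yet $(n-1)^2/4+1$ sits strictly below the $n^2/4$ threshold of Bondy's classical pancyclicity theorem \cite{bondy} (and of \Cref{lemma:bondy}), so even this case is not free and the edge deficit must be made up by exploiting the odd cycle rather than by a pure counting argument. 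Making this splicing work uniformly over all $\ell$ while keeping the edge count under control so that the threshold $(n-1)^2/4+1$ is exactly what is required is the crux, and the tightness of the extremal triangle-free construction in the first part signals that no weaker edge bound can suffice.
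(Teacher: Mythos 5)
This statement is quoted from Brandt \cite{brandt} and the paper gives no proof of it, so your attempt can only be measured against the theorem itself, and there it has a genuine gap. The first half of your plan is sound in outline: the extremal fact that a triangle-free non-bipartite graph on $n$ vertices has at most $\lfloor (n-1)^2/4 \rfloor + 1$ edges is a classical result (extremal graph: a balanced complete bipartite graph on $n-1$ vertices with one edge subdivided, which also explains why the shortest odd cycle in the tight case is a pentagon), and your shortest-odd-cycle counting scheme can be carried out. Although you defer the actual count, that part is completable and yields girth $3$.

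The weak pancyclicity half, however, is not a proof, and the specific mechanism you propose fails. The reduction ``every $\ell$-cycle admits an $(\ell-1)$-cycle via a chord at distance $2$ or an external vertex with two well-placed neighbours'' is false at the stated density: the threshold $(n-1)^2/4+1$ is essentially bipartite density, so $G$ can be a large complete bipartite graph $K_{a,b}$ with a single small odd gadget attached (for instance one subdivided edge). An even cycle lying inside the bipartite core has every chord joining opposite sides of the bipartition, hence at odd distance along the cycle, and every external attachment is bipartition-respecting; no local parity-changing reroute exists, and the $(\ell-1)$-cycle can only be found globally by routing through the unique odd structure. Your write-up flags exactly this as ``the crux'' but offers no mechanism for the splicing, and the assertion that ``the density hypothesis is precisely what guarantees that such chords or crossing edges exist'' is unsubstantiated --- it is where the entire difficulty of Brandt's theorem resides. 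The same applies to the Hamiltonian case you raise: you correctly observe that $(n-1)^2/4+1$ sits below the $n^2/4$ threshold of Bondy's theorem \cite{bondy}, so \Cref{lemma:bondy} does not close it, but you then leave it open. Brandt's actual argument is a global induction on $n$ (removing low-degree vertices while controlling non-bipartiteness and the circumference, with the dense end handled by Bondy-type results), not a cycle-by-cycle length reduction; as it stands your proposal establishes girth $3$ modulo a standard count, but not weak pancyclicity.
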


More theorems and results about weakly pancyclic graphs will be stated in \Cref{section:proofofsecondarytheorem}. The next simple lemma ensures a bound on the girth of a graph given a lower bound on the minimum degree.

\begin{lemma} \label{lemma:boundongirth}
Let $G$ be a graph on $n \ge 9$ vertices with $\delta(G) \ge \frac{n}{4}$. Then $g(G) \leq 5$.
\end{lemma}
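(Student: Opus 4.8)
The plan is to argue by contradiction: assume $g(G) \geq 6$, so that $G$ contains no cycle of length $3$, $4$ or $5$, and then derive a Moore-type lower bound on $n$ that is incompatible with $\delta(G) \geq n/4$. Concretely, I would fix a vertex $v$ of minimum degree and analyse the first three levels $N_1, N_2, N_3$ of a breadth-first search rooted at $v$, where $N_i$ denotes the set of vertices at distance exactly $i$ from $v$.

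The first level is immediate: $|N_1| = \delta := \delta(G)$, and since there is no triangle, $N_1$ is independent. For the second level, the absence of $C_4$ forces each vertex of $N_2$ to have a \emph{unique} neighbour in $N_1$, so the $\geq \delta - 1$ further neighbours of the vertices of $N_1$ are distinct and $|N_2| \geq \delta(\delta-1)$. The absence of $C_5$ (together with that of $C_3$) shows that $N_2$ is itself independent: two adjacent vertices $x,y$ of $N_2$ sharing a parent would close a triangle, while with distinct parents $w,w'$ (of $x,y$ respectively) they would close the $5$-cycle $v\,w\,x\,y\,w'$. Hence each vertex of $N_2$ has its only neighbour below level $2$ being its parent and none within $N_2$, so it sends at least $\delta - 1$ edges into $N_3$. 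Finally—and this is the step that actually uses the girth beyond the bound $n \geq 1+\delta^2$ coming from levels $0,1,2$—I would count $|N_3|$: any vertex of $N_3$ adjacent to two vertices of $N_2$ with a common parent would again create a $C_4$, so each vertex of $N_3$ has at most $|N_1| = \delta$ neighbours in $N_2$; comparing the $\geq \delta(\delta-1)^2$ edges between $N_2$ and $N_3$ against this per-vertex cap gives $|N_3| \geq (\delta-1)^2$. Summing the four levels yields $n \geq 1 + \delta + \delta(\delta-1) + (\delta-1)^2 = 2\delta^2 - 2\delta + 2$.

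The finish is pure arithmetic. Using $\delta \geq n/4$, i.e. $n \leq 4\delta$, the bound becomes $2\delta^2 - 2\delta + 2 \leq 4\delta$, equivalently $\delta^2 - 3\delta + 1 \leq 0$, which forces the integer $\delta$ to satisfy $\delta \leq 2$. But $n \geq 9$ and $\delta \geq n/4$ give $\delta \geq 3$, a contradiction; hence $g(G) \leq 5$. I expect the only delicate point to be the third-level count: one must verify that the multiplicity bound for vertices of $N_3$ is genuinely controlled by the $C_4$-freeness of $G$ (so that dividing the edge count by $\delta$ is legitimate), and check that no shorter forbidden cycle is accidentally created when a vertex of $N_3$ attaches to several vertices of $N_2$ with distinct parents—such configurations produce only $6$-cycles, which are permitted under $g(G)\ge 6$.
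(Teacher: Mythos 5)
Your proof is correct, and it takes a genuinely different route from the paper's. The paper argues directly on a shortest cycle $C$: assuming $g(G) \geq 6$, minimality forces every vertex of $C$ to have exactly two neighbours on $C$, and the sets $N(x) \setminus V(C)$ for $x \in V(C)$ to be pairwise disjoint, whence $n \geq |C|\,(\delta(G)-1) \geq 6(\delta(G)-1)$, a count \emph{linear} in $\delta$ that is then played against $\delta(G) \geq n/4$. You instead run the classical Moore-bound argument: a breadth-first search to depth three, using $C_3$-, $C_4$- and $C_5$-freeness to make $N_1$ and $N_2$ independent, parents unique, and children sets disjoint, arriving at the \emph{quadratic} bound $n \geq 1 + \delta + \delta(\delta-1) + (\delta-1)^2 = 2\delta^2 - 2\delta + 2$, which is incompatible with $n \leq 4\delta$ once $\delta \geq 3$. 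All your steps check out; in particular the third-level step you flag as delicate is fine: the $N_2$-neighbours of a fixed $z \in N_3$ must have pairwise distinct parents by $C_4$-freeness, so there are at most $|N_1| = \delta$ of them and dividing the edge count $\geq \delta(\delta-1)^2$ by $\delta$ is legitimate, while attachments to several $N_2$-vertices with distinct parents produce only $6$-cycles, as you observe. Your stronger count buys something concrete: it proves $g(G) \leq 5$ already when $\delta$ is of order $\sqrt{n}$, and it is airtight at the boundary case $n = 12$, $\delta = 3$, where the paper's linear inequality holds with equality ($n \geq 6(\delta - 1)$ reads $12 \geq 12$), so the paper's concluding display $n/6 + 1 \geq \lceil n/4 \rceil$ is in fact \emph{true} at $n = 12$ and yields no contradiction there; your bound $n \geq 14$ closes that case cleanly (this matches the $(3,6)$-cage, the Heawood graph, having $14$ vertices). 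The price is a somewhat longer argument; the paper's shortest-cycle count is shorter and, as written, settles every $n \geq 9$ except that tight value.
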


\begin{proof}
Let $C$ be a shortest cycle in $G$. Suppose $|C| \ge 6$. By the choice of $C$, each vertex in $V(C)$ has exactly two neighbors in $V(C)$. So, $|N(x) \setminus V(C)| \ge \delta(G) - 2$ for each $x \in V(C)$. Furthermore, for each pair of distinct vertices $x,y \in V(C)$, the sets $N(x) \setminus V(C)$ and $N(y) \setminus V(C)$ are disjoint, for the same reason. Thus, \[ n \ge |C| + \sum_{x \in V(C)} |N(x) \setminus V(C)| \ge |C| + |C| (\delta(G) - 2) = |C|(\delta(G) - 1), \]implying that $n/6 + 1 \ge \lceil n/4 \rceil$, a contradiction for $n \ge 9$.
\end{proof}

\begin{theorem}[Dirac, \cite{dirac}] \label{lemma:dirac}
Let $G$ be a 2-connected graph in $n \ge 3$ vertices. Then $c(G) \ge \min \{ 2 \delta(G), n \}$.
\end{theorem}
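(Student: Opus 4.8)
The plan is to take a longest cycle and show that if it is neither Hamiltonian nor of length at least $2\delta(G)$, then $2$-connectivity lets us build a strictly longer cycle. Write $\delta := \delta(G)$ and let $C$ be a longest cycle, so $|C| = c(G)$. If $C$ spans $G$ then $c(G) = n \ge \min\{2\delta, n\}$ and we are done; so assume $c(G) < n$ and aim to prove $c(G) \ge 2\delta$, which finishes the proof. Suppose for contradiction that $c(G) < 2\delta$.

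Since $c(G) < n$, some vertex lies outside $C$; let $H$ be the component of $G - V(C)$ containing it, and let $v \in V(H)$. Because $G$ is $2$-connected, $H$ cannot attach to $C$ at a single vertex (that vertex would be a cutvertex), so there is a fan of two paths from $v$ to $V(C)$ meeting $C$ only at their distinct endpoints $x, y$; together they form a detour $Q$ from $x$ to $y$ through $v$, of length at least $2$, whose interior lies off $C$. The vertices $x, y$ split $C$ into arcs $A_1$ and $A_2$ with $|A_1| + |A_2| = c(G)$. Replacing either arc by $Q$ yields a cycle, and maximality of $C$ forces $|Q| + |A_i| \le c(G)$ for both $i$; adding these gives $|Q| \le \tfrac{1}{2}c(G) < \delta$.

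The key step is a successor-counting (crossing-chord) argument in the spirit of Erd\H{o}s and Gallai. Fix an orientation of $C$ and, for a neighbor $z$ of an endpoint lying on $C$, write $z^+$ for its successor along $C$. If $x$ has a neighbor $z$ on one arc and $y$ has a neighbor at $z^+$, then rerouting $C$ through $Q$ together with the chords $xz$ and $y z^+$ produces a cycle strictly longer than $C$, contradicting maximality. Hence the successors of the neighbors of $x$ on $C$ are forbidden as neighbors of $y$, and symmetrically. Carrying this bookkeeping across the two arcs, I expect to show that the neighborhood of $x$ and the shift of the neighborhood of $y$ occupy disjoint positions along $C$, so that $c(G) = |A_1| + |A_2| \ge \deg(x) + \deg(y) \ge 2\delta$, the desired contradiction.

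The main obstacle is exactly this last counting: making the crossing-chord argument yield the factor of $2$, that is, verifying that the neighbor set of $x$ and the shifted neighbor set of $y$ are genuinely disjoint on $C$ and jointly have size at most $c(G)$. Two smaller points also need care: invoking $2$-connectivity correctly to obtain a detour with two distinct endpoints on $C$ (so that both arcs are nonempty), and checking in each rerouting that the resulting closed walk is in fact a strictly longer cycle rather than merely a closed walk. Once the factor of $2$ is established, the two cases, namely $C$ Hamiltonian or $c(G) \ge 2\delta$, together give $c(G) \ge \min\{2\delta, n\}$.
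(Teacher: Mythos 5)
The paper does not prove this statement; it is quoted as a classical theorem of Dirac with a citation, so your attempt must stand on its own against the known argument. It does not: the gap you flag at the end is genuine and, as structured, fatal rather than a matter of bookkeeping. The crossing-chord (rotation) technique you invoke belongs to longest \emph{paths}: for the endpoints $u,v$ of a longest path $P$ one knows $N(u) \cup N(v) \subseteq V(P)$, which is precisely what makes a count of the form $\Vert P \Vert \ge \deg(u) + \deg(v)$ meaningful. Your $x$ and $y$ are merely the feet of a fan from $v \in H$ to $C$, and nothing confines their neighborhoods to $V(C)$ --- each may have many neighbors inside $H$ or in other components of $G - V(C)$ --- so even if you proved that $N(x)$ and the shifted copy of $N(y)$ occupy disjoint positions on $C$, the conclusion $c(G) \ge \deg(x) + \deg(y)$ would not follow. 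Worse, the rerouting step itself fails: suppose $z$ and its successor $z^+$ lie on the arc $A_2$, with chords $xz$ and $yz^+$ present. Deleting the edge $zz^+$ turns $C$ into a path whose \emph{endpoints} are $z$ and $z^+$, while $x$ and $y$ sit in its interior; one checks that every cycle assembled from $Q$, the two chords, and subarcs of $C$ must omit an entire arc segment, and such cycles (e.g., of length $|A_1| + 3$, or $\Vert Q \Vert + 1 + d_C(z^+, x)$) need not exceed $c(G)$ at all. So the claim ``successors of neighbors of $x$ are forbidden as neighbors of $y$'' is simply false in this setting, and with it the disjointness bookkeeping and the final factor of $2$ collapse.

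What survives of your write-up is the correct reduction (it suffices to rule out $c(G) < 2\delta$ and $c(G) < n$ simultaneously), the correct use of the fan lemma to get a detour $Q$ with distinct feet $x \neq y$, and the valid estimate $\Vert Q \Vert \le c(G)/2$ from maximality --- though you never use the resulting bound $\Vert Q \Vert < \delta$. The standard proofs (Dirac's original argument, or the vine-based proof as in Bondy--Murty) instead work from a longest path $P = v_0 \cdots v_\ell$, where the endpoint neighborhoods genuinely lie on $P$ --- restoring exactly the property your $x, y$ lack. There, either a crossing pair $v_0 v_{i+1}, v_\ell v_i \in E(G)$ yields a cycle through all of $V(P)$, or the index sets $\{ i : v_0 v_{i+1} \in E(G) \}$ and $\{ i : v_\ell v_i \in E(G) \}$ are disjoint, forcing $\ell \ge 2\delta$; the remaining work --- converting a long path into a long cycle --- is where 2-connectivity enters, via Menger-type fans or Bondy's vines, and it is nontrivial. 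To repair your proof you would need to replace the attachment vertices $x, y$ by genuine path endpoints (or import the vine machinery), not merely tighten the counting.
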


\section{Proof of \Cref{theorem:mainresult}}

In this section we prove \Cref{theorem:mainresult}. We assume \Cref{theorem:secondaryresult}, postponing its proof to \Cref{section:proofofsecondarytheorem}.

Let $k \ge 6$, so that $n = 2k+1 \ge 13$. Let $N \ge 5k+3$ and $G := K_{2k+1}$. Suppose there exists a red-blue edge-coloring of $K_N$ in a way such that $C_{2k+1}$ is not a red subgraph of $G$ and $W_{2k+1}$ is not a blue subgraph of $G$. By \Cref{lemma:surahmat}, we may asumme that $N \leq 6k$.

\Cref{theorem:secondaryresult} implies that $r(C_{2k+1},W_{2k+2}) \leq 5k+3$ when $k \ge 5$. Hence, $G$ contains a blue copy of $W_{2k+2}$ as a subgraph. Choose such a copy, and let $C$ be the ``rim'' of the wheel (the $(2k+2)$-cycle) and let $w$ be the ``hub'' of the wheel (the vertex not in $C$).

Consider the graph $G[C]$ with the induced edge-coloring of $G$. This graph does not contain a red copy of $C_{2k+1}$, as it would be present in $G$ as well. It also does not contain a blue copy of $C_{2k+1}$, as otherwise, adding $w$, we would create a blue copy of $W_{2k+1}$ in the graph $G$. So, the graph $G^R[C]$ (the graph induced by vertices of $C$, but only considering the red-colored edges) satisfies the hypotheses of  \Cref{lemma:partitionofhamiltoniangraph}. Hence, there exists a partition $\{U_1', U_2'\}$ of the vertices of $C$ such that $|U_1'| = |U_2'| = k+1$, both $U_1'$ and $U_2'$ induce complete red subgraphs, and there exists a vertex $v \in V(C)$ such that $E(U_1' - v, U_2' - v) \subseteq E^B(G)$. Without loss of generality, we suppose that $v \in U_2'$. We define $U_i = U_i' - \{v\}$ for each $i \in \{1,2\}$ and $U_3 = \{w\}$, as defined in the previous paragraph. So $|U_1| = k+1$ and $|U_2| = k$.

Recall that every edge contained in $U_i$ is red, for $i \in \{1,2,3\}$ and every edge between different pairs in $\{U_1, U_2, U_3\}$ is blue. We choose a triple $(X_1, X_2, X_3)$ of pairwise disjoint sets such that $U_i \subseteq X_i$ and every edge in $E(X_i, X_j)$ is blue; for each distinct $i, j \in \{1,2,3\}$. Assume $(X_1, X_2, X_3)$ maximizes the sum $|X_1| + |X_2| + |X_3|$ among all possible $3$-tuples satisfying the previous conditions. With this choice of $(X_1, X_2, X_3)$, we get the following lemma.

\begin{lemma} \label{lemma:xiastinducescompleteredgraphs}
Every edge contained in one of the sets $X_1$, $X_2$ or $X_3$ is red.
\end{lemma}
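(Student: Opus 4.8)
The plan is to argue by contradiction: assuming some edge inside a part $X_i$ is blue, I will build a blue copy of $W_{2k+1}$, contradicting the coloring hypothesis. Two features of the configuration drive every construction. First, every edge between two distinct parts is blue, so any single vertex of one part is blue-adjacent to all of the other two parts and can serve as the hub of a blue wheel whose rim lives in those two parts. Second, between any two parts the blue edges already form a complete bipartite graph, so by \Cref{lemma:pancyclicalmostbipartite} a single extra blue edge inside the larger side yields a blue $C_{2k+1}$. I would also record at the start that each $U_i$ induces a red complete subgraph (being contained in $U_1'$ or $U_2'$, or equal to $\{w\}$), so a blue edge inside $X_i$ must have an endpoint in $X_i \setminus U_i$.

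First I would treat the case where the blue edge $e$ lies in $X_1$ or $X_2$. Since $|X_1| \ge |U_1| = k+1$, and since a blue edge inside $X_2$ forces $|X_2| \ge |U_2| + 1 = k+1$ (because $U_2$ is red-complete of size $k$), in either case the part containing $e$ has at least $k+1$ vertices. I would then choose $V_1$ of size $k+1$ inside that part, containing both endpoints of $e$, and $V_2$ of size $k$ inside the other of $X_1, X_2$; the blue graph on $V_1 \cup V_2$ is complete bipartite across the two parts together with the blue edge $e$ inside $V_1$, so \Cref{lemma:pancyclicalmostbipartite} produces a blue $C_{2k+1}$. Using $w \in X_3$ as hub, which is blue-joined to all of $X_1 \cup X_2$ by the cross-part rule, completes a blue $W_{2k+1}$.

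The main obstacle is the case $e = \{c,d\} \subseteq X_3$, since $X_3$ may be as small as two vertices while the argument above needs the blue edge on a side of size $k+1$. I would resolve this with a complementary construction that promotes an endpoint of $e$ to the hub instead of using an external vertex. Take $c$ as hub and build the rim inside $X_1 \cup X_2 \cup \{d\}$. The vertex $d \in X_3$ is blue-adjacent, by the cross-part rule, to all of $X_1$ and all of $X_2$, so weaving $d$ into an alternating blue path between $X_1$ and $X_2$ gives a blue cycle $d, u_1, a_1, u_2, a_2, \dots, u_k, a_k, d$ with $u_i \in X_2$ and $a_i \in X_1$; this uses $k$ vertices from each of $X_1$ ($\ge k+1$) and $X_2$ ($\ge k$), so it exists and has length exactly $2k+1$. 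The hub $c$ is blue-adjacent to $X_1 \cup X_2$ (cross-part) and to $d$ (the edge $e$), hence to every rim vertex, producing a blue $W_{2k+1}$. As these three cases are exhaustive, no internal edge can be blue, which is the claim. The two constructions are genuinely complementary: the first needs the edge's own part to have at least $k+1$ vertices (automatic for $X_1$ and $X_2$), while the second needs the two other parts to have at least $k$ vertices (automatic precisely when the edge sits in the small part $X_3$).
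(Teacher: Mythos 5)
Your proof is correct and takes essentially the same route as the paper: for a blue edge in $X_1$ or $X_2$ you invoke \Cref{lemma:pancyclicalmostbipartite} to build a blue $C_{2k+1}$ in $X_1 \cup X_2$ with a hub in $X_3$, and for a blue edge in $X_3$ your explicit alternating rim through $X_1$, $X_2$, and one endpoint is precisely the paper's observation that $C_{2k+1} \subseteq K_{k,k,1}$, with the other endpoint serving as hub. The size bookkeeping ($|X_1| \ge k+1$, $|X_2| \ge k$, and the forced extra vertex when the blue edge lies in $X_2$) matches the paper's argument as well.
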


\begin{proof}
Let $i \in \{1,2\}$ and suppose that $X_i$ contains a blue edge $e$. As $|U_i| \ge k$ and $U_i$ is a complete red subgraph, this means that $|X_i| \ge k+1$. Also, $|X_{3-i}| \ge k$. So, by Lemma \ref{lemma:pancyclicalmostbipartite}, there is a blue monochromatic $(2k+1)$-cycle in $X_1 \cup X_2$. As $|X_3| \ge 1$ and every vertex in $X_3$ is joined by a blue edge to each vertex in $X_1 \cup X_2$, we get a blue copy of $W_{2k+1}$ in $G$, a contradiction.

Now, suppose that $X_3$ contains a blue edge $e = x_1 x_2$. As $C_{2k+1} \subseteq K_{k,k,1}$, we have a blue copy of $C_{2k+1}$ contained in $X_1 \cup X_2 \cup \{x_2\}$. But $x_1$ is joined with a blue edge to every vertex in $X_1 \cup X_2 \cup \{x_2\}$, so we find a blue copy of $W_{2k+1}$ as a subgraph in $G$, which is a contradiction. This proves the lemma.
\end{proof}

\Cref{lemma:xiastinducescompleteredgraphs} together with our assumption that $G$ contains no red copy of $C_{2k+1}$ imply that $|X_i| \leq 2k$ for each $i \in \{1,2,3\}$. If $\bigcup_{i=1}^3 X_i = V(G)$, then we are done. So, from now on, we suppose that $V(G) \setminus \bigcup_{i=1}^3 X_i \neq \emptyset$.

Let $v \notin \bigcup_{i=1}^3 X_i$. We show that there exists an $i \in \{1,2,3\}$ such that $E(v, X_i) \subseteq E^R(G)$. If it were not the case, there exist vertices $x_i \in X_i$ such that $v x_i$ is blue, for every $i \in \{1,2,3\}$. As $E(X_1, X_2) \subseteq E^B(G)$ and $|X_1|, |X_2| \ge k$, we can find a blue $(x_1, x_2)$-path in $X_1 \cup X_2$ of length $2k-1$. Along with the edges $v x_1$ and $v x_2$, we get a blue copy of $C_{2k+1}$ as a subgraph of $G$. But $x_3$ is joined with a blue edge to every vertex in $X_1 \cup X_2 \cup \{v\}$, so we find a blue copy of $W_{2k+1}$ as a subgraph in $G$, which is a contradiction.

This allows us to define \begin{align*}
W_1 & := X_1 \cup \{ v \in V(G) : E(v, X_1) \subseteq E^R(G) \}, \\
W_2 & := X_2 \cup \{ v \in V(G) : E(v, X_2) \subseteq E^R(G) \} \setminus W_1 \text{, and} \\
W_3 & := X_3 \cup \{ v \in V(G) : E(v, X_3) \subseteq E^R(G) \} \setminus (W_1 \cup W_2).
\end{align*} By the previous observations, $\{W_1, W_2, W_3\}$ is a partition of $V(G)$.

\begin{lemma} \label{lemma:rededgeswiwj}
Let $i \in \{1,2,3\}$ and $v \in W_i \setminus X_i$. Then $v$ has at least one red neighbor in $X_j$ for some $j \in \{1,2,3\} \setminus \{i\}$.
\end{lemma}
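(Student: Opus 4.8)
The plan is to argue by contradiction, exploiting the fact that the triple $(X_1, X_2, X_3)$ was chosen to maximize the sum $|X_1| + |X_2| + |X_3|$. Fix $i \in \{1,2,3\}$, write $\{i,j,k\} = \{1,2,3\}$, and suppose towards a contradiction that $v \in W_i \setminus X_i$ has no red neighbor in $X_j$ and no red neighbor in $X_k$; equivalently, every edge joining $v$ to $X_j \cup X_k$ is blue. The strategy is to show that, under this hypothesis, $v$ could have been absorbed into $X_i$ at the time the triple was selected, violating maximality.

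First I would record that $v$ lies outside every $X_\ell$. Since $v \in W_i \setminus X_i$ we have $v \notin X_i$; and because $\{W_1, W_2, W_3\}$ is a partition of $V(G)$ with $X_j \subseteq W_j$ and $X_k \subseteq W_k$, the vertex $v$ belongs to neither $X_j$ nor $X_k$. Hence $v \notin X_1 \cup X_2 \cup X_3$, so the triple obtained by replacing $X_i$ with $X_i \cup \{v\}$ (leaving $X_j, X_k$ unchanged) still consists of pairwise disjoint sets and still satisfies $U_\ell \subseteq X_\ell$ for every $\ell$.

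The key step is to verify that the enlarged triple $(X_i \cup \{v\}, X_j, X_k)$ still has all edges between distinct classes blue. The cross-edges already present are blue by the defining property of $(X_1, X_2, X_3)$, and the only new cross-edges are those from $v$ to $X_j$ and to $X_k$, which are blue exactly by the contradiction hypothesis. Note that the maximizing triple is constrained only on edges \emph{between} distinct classes, so the fact that $v$ is all-red to $X_i$ plays no role and causes no obstruction. Thus the enlarged triple meets every requirement while having sum $|X_1| + |X_2| + |X_3| + 1$, contradicting maximality and proving the claim.

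I expect no genuine obstacle here: the whole content is that the negation of the conclusion forces $v$ to be all-blue to the two classes other than $X_i$, which is precisely the condition permitting $v$ to be added to $X_i$. The only point requiring a moment's care is confirming that $v$ avoids $X_j$ and $X_k$ so that the enlargement is legitimate, and this is immediate from the disjointness of $W_1, W_2, W_3$.
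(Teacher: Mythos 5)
Your proof is correct and is exactly the argument the paper intends: its proof of this lemma is the single line ``Immediate from the maximality of $|X_1| + |X_2| + |X_3|$,'' and you have simply made explicit the absorption of $v$ into $X_i$, including the (correct) observation that the maximizing triple is constrained only on edges between distinct classes and that $v \notin X_j \cup X_k$ follows from $\{W_1, W_2, W_3\}$ being a partition with $X_\ell \subseteq W_\ell$.
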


\begin{proof}
Immediate from the maximality of $|X_1| + |X_2| + |X_3|$.
\end{proof}

We say that a hedgehog in $G$ is \textit{red} if it is present in $G^R$, and \emph{blue} otherwise. Then each of the tuples $(W_i, X_i)$ for $i \in \{1,2,3\}$ are red hedgehogs. In particular, as $|X_1| \ge k+1$ and $|X_2| \ge k$, we have that both $(W_1, X_1)$ and $(W_2, X_2)$ are disjoint red hedgehogs that satisfy the hypothesis of \Cref{corollary:nodisjointrededges}, and so, if there are disjoint red edges in $E(W_1, W_2)$, there would be a red ($2k+1$)-cycle. This result can be strengthened, according to the following lemma.

\begin{lemma} \label{lemma:nodisjointw1w2redpaths}
There are no two vertex-disjoint red $(W_1, W_2)$-paths.
\end{lemma}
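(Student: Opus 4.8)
The plan is to argue by contradiction: assuming two vertex-disjoint red $(W_1,W_2)$-paths exist, I would produce the forbidden red copy of $C_{2k+1}$. Among all pairs of vertex-disjoint red $(W_1,W_2)$-paths, choose $P,Q$ minimizing the total length $\Vert P\Vert+\Vert Q\Vert$, and write $p_1,q_1\in W_1$ and $p_2,q_2\in W_2$ for their endpoints, so that $p_1\neq q_1$, $p_2\neq q_2$, and all interior vertices lie in $W_3$. Since $(W_1,X_1)$ and $(W_2,X_2)$ are red hedgehogs, \Cref{lemma:joininghedgehogbylongpaths} lets me join $p_1$ to $q_1$ by a red path $A\subseteq W_1$ of any prescribed length in $[2,|X_1|-1]$ and $p_2$ to $q_2$ by a red path $B\subseteq W_2$ of any prescribed length in $[2,|X_2|-1]$. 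As $A\subseteq W_1$, $B\subseteq W_2$, and the interiors of $P,Q$ lie in $W_3$, the four pieces are internally disjoint, so their concatenation is a red cycle of length $\Vert P\Vert+\Vert Q\Vert+\Vert A\Vert+\Vert B\Vert$. Letting $\Vert A\Vert,\Vert B\Vert$ range over their intervals yields red cycles of every length in $[\,\Vert P\Vert+\Vert Q\Vert+4,\ \Vert P\Vert+\Vert Q\Vert+|X_1|+|X_2|-2\,]$. Because $|X_1|+|X_2|\ge 2k+1$ the right endpoint is always at least $2k+1$, so it suffices to prove $\Vert P\Vert+\Vert Q\Vert\le 2k-3$ in order to place $2k+1$ in the range and reach a contradiction.

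It thus remains to bound the lengths of $P$ and $Q$, and I would show each has length at most $4$ (so that the total is at most $8\le 2k-3$ for $k\ge 6$). Fix $P$ with interior vertices $v_1,\dots,v_{t-1}\in W_3$. If some $v_i$ lies in $X_3$, then since $X_3$ is a complete red set joined completely in red to $W_3\setminus X_3$, the vertex $v_i$ is red-adjacent to every other vertex of $W_3$, in particular to $v_1$ and $v_{t-1}$; collapsing $P$ to $p_1v_1v_iv_{t-1}p_2$ (merging coincident vertices when $t$ is small) gives a red $(W_1,W_2)$-path of length at most $4$ using only vertices of $P$, which is disjoint from $Q$, so minimality forces $\Vert P\Vert\le 4$. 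If instead every $v_i$ lies in $W_3\setminus X_3$, then $v_1$ and $v_{t-1}$ are both red-adjacent to all of $X_3$; choosing $x\in X_3$ gives the red $(W_1,W_2)$-path $p_1v_1xv_{t-1}p_2$ of length at most $4$, and provided $x$ lies on neither $P$ nor $Q$ this again contradicts minimality whenever $\Vert P\Vert\ge 5$. As a cross-check, in this second case one may also invoke \Cref{lemma:rededgeswiwj}: a middle vertex $v_i$ with $2\le i\le t-2$ has no red neighbour in $W_1$ or $W_2$ by chord-freeness, yet must have a red neighbour in $X_1\cup X_2\subseteq W_1\cup W_2$, forcing $t\le 3$.

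The step I expect to be the main obstacle is precisely the disjointness proviso highlighted above: the rerouting connector $x\in X_3$, and more generally any vertex introduced while shortening one path, must avoid the other path. This is automatic when both paths avoid $X_3$ (any $x\in X_3$ lies on neither) and when a path self-shortcuts through one of its own $X_3$-vertices. The genuine difficulty is the degenerate case $|X_3|=1$, say $X_3=\{w\}$: then at most one path passes through $w$, the other must avoid it, and rerouting that path through $X_3$ would reuse $w$. Here the path avoiding $w$ must instead be shortened through $X_1\cup X_2$, where the only obstruction is a middle vertex whose red neighbours in $X_1\cup X_2$ are confined to the two endpoints already committed to the shortened companion path. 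I would dispose of this by exploiting the abundance of vertices in the hedgehogs ($|X_1|\ge k+1$ and $|X_2|\ge k$): applying \Cref{lemma:rededgeswiwj} at several middle vertices at once and, if necessary, rerouting the companion path's endpoints within their own hedgehogs, one forces in every configuration either an admissible shortcut disjoint from the companion or a strictly shorter disjoint pair, contradicting the minimality of $\Vert P\Vert+\Vert Q\Vert$.
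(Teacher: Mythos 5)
Your first half coincides with the paper's: the same minimal disjoint pair, the same hedgehog cycle range forcing $\Vert P\Vert+\Vert Q\Vert\ge 2k-2$ (your contrapositive ``$\le 2k-3$ suffices''), and the same $X_3$-shortcut showing that any path meeting $X_3$ has length at most $4$. The gap is exactly the case you defer to your final paragraph, and the repair you sketch there cannot work as described. When the hub $x\in X_3$ lies on the short path $P$ and $Q$ avoids $X_3$, minimality together with \Cref{lemma:rededgeswiwj} does \emph{not} let you shorten $Q$: it only forces every red neighbour in $X_1\cup X_2$ of a middle vertex of $Q$ to lie in $\{p_1,p_2\}$ (a shortcut through any other vertex of $X_1\cup X_2$, including $q_1$ or $q_2$, does yield a strictly shorter configuration, but a shortcut through $p_1$ or $p_2$ collides with $P$, and one checks that no re-pairing produces two disjoint $(W_1,W_2)$-paths of smaller total length). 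So the residual configuration --- $Q$ long, all its middle vertices red-attached outside $W_3$ only to $P$'s endpoints --- is fully consistent with minimality, and ``admissible shortcut or strictly shorter disjoint pair'' is provably unavailable there; what is needed is a new cycle construction, not more minimality-pushing. Moreover the cycles most naturally extracted from that configuration (a segment of $Q$ closed through $p_1$ or $p_2$ and a hedgehog path) have length at most about $\Vert Q\Vert+k$, and since $\Vert Q\Vert$ can be as small as $2k-6$, this tops out near $3k-6<2k+1$ at $k=6$, the boundary of the theorem's range. (Your ``cross-check'' is unsound for the same reason: chord-freeness does not forbid a middle vertex from having its red neighbour at an endpoint of the companion path, so it does not force $t\le 3$.)

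The paper's resolution is precisely the construction your sketch is missing. Having shown the shorter path $P_1$ satisfies $\Vert P_1\Vert\le 4$, it deduces $\Vert P_2\Vert\ge 2k-6$ from the bound $\Vert P_1\Vert+\Vert P_2\Vert\ge 2k-2$, concludes $x\in V(P_1)$, and then exploits that $x$ is red-adjacent to \emph{every} interior vertex $y$ of $P_2$ (all lie in $W_3$): walking $p_1^1P_1x$, jumping along $xy$, and following $P_2$ to $p_1^2$ gives red $(x,p_1^2)$-paths of every length from $2$ to $\Vert P_2\Vert$, which, closed through the $W_1$-hedgehog, give red cycles of every length in $[6,3k-5]$. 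Since $3k-5\ge 2k+1$ exactly when $k\ge 6$, this catches $C_{2k+1}$ with no slack --- the extra edges through $P_1$ and the full range of jump targets on $P_2$ are what your pigeonhole-based ranges lack at $k=6$. To complete your proof you would need to add this jump-through-$x$ construction (or an equivalent splice routing through both $P$ and the hedgehogs of $W_1$ and $W_2$) to handle the residual configuration; you should also note that the hard case is not only $|X_3|=1$ but any situation with $X_3\subseteq V(P)$, which your case split as stated does not cover.
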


\begin{proof}
Suppose otherwise. Let $P_1, P_2$ be two vertex-disjoint $(W_1, W_2)$-paths of minimum joint length, and such that $\Vert P_1 \Vert \leq \Vert P_2 \Vert$. Let $p_1^1 \in W_1$ and $p_2^1 \in W_2$ be the endpoints of $P_1$; and $p_1^2 \in W_1$ and $p_2^2 \in W_2$ be the endpoints of $P_2$.

As $(W_1, X_1)$ is a red hedgehog and $|X_1| \ge k+1$, by \Cref{lemma:joininghedgehogbylongpaths} we can join the vertices $p_1^1$ and $p_1^2$ in $W_1$ with red paths of every length between $2$ and $k$. Similarly, we can join $p_2^1$ and $p_2^2$ in $W_2$ with red paths of every length between $2$ and $k-1$. Joining the paths $P_1$ and $P_2$ with the $(p_1^1, p_1^2)$-paths and $(p_2^1, p_2^2)$-paths previously mentioned, we obtain red cycles of every length between $\Vert P_1 \Vert + \Vert P_2 \Vert + 4$ and $\Vert P_1 \Vert + \Vert P_2 \Vert + 2k-1$. As $C_{2k+1}$ is not contained as a red subgraph in $G$, necessarily the bound \begin{align}
2k-2 \leq \Vert P_1 \Vert + \Vert P_2 \Vert \label{equation:p1p2equation1}
\end{align} holds. For each $i, j \in \{1,2\}$, we define $q_i^j$ as the vertex in $P_j$ adjacent to $p_i^j$ in the path $P_j$. Note that if $\Vert P_j \Vert \ge 3$, then the vertices $q_1^j$ and $q_2^j$ are distinct. As $(W_3, X_3)$ is a red hedgehog, there exists $x \in X_3$ such that $xv$ is a red edge for every other vertex $v \in W_3$.

Suppose the path $P_j$ contains $x$ in its vertices. As every vertex in $V(P_j) \subseteq W_3$ is a red neighbor of $x$, using $x$ we can find a strictly shorter path among the vertices of $P_j$, contradicting the minimality of $\Vert P_1 \Vert + \Vert P_2 \Vert$. We deduce that \begin{align}
\text{if a path $P_j$ contains $x$ in its vertices, then } \Vert P_j \Vert \leq 4. \label{equation:p1p2equation2}
\end{align} Now, suppose that $P_j$ does not contain the vertex $x$ and $\Vert P_{3-j} \Vert > 4$. Then, using (\ref{equation:p1p2equation2}) we deduce that $x$ does not belong in $ V(P_{3-j})$, and by hypothesis, $x \notin V(P_j)$. Then $p_1^{3-j} q_1^{3-j} x q_2^{3-j} p_2^{3-j}$ is a path of length 4 that is vertex-disjoint with $P_1$, which again contradicts the minimality of $\Vert P_1 \Vert + \Vert P_2 \Vert$. We have proved that 
\begin{align}
\text{if a path $P_j$ does not contain $x$ in its vertices, then } \Vert P_{3-j} \Vert \leq 4. \label{equation:p1p2equation3}
\end{align}

Using (\ref{equation:p1p2equation2}) and (\ref{equation:p1p2equation3}) together, the shorter of $P_1$ and $P_2$ has length at most $4$, and so it follows that $\Vert P_1 \Vert \leq 4$. Recalling the equation (\ref{equation:p1p2equation1}), we get that \begin{align}2k-6 \leq \Vert P_2 \Vert. \label{equation:p1p2equation5}
\end{align} As $2k+1 \ge 13$, we have that $\Vert P_2 \Vert > 4$. Therefore, using (\ref{equation:p1p2equation3}) we deduce that $x \in P_1$.

The edge $xy$ is red for every $y$ in $W_3$, and $x$ is contained in $P_1$, while the path $P_2$ is long; the idea is to use $x$ to construct shorter cycles using the vertices of $P_2$. Concretely, for every vertex $y \in V(P_2) \cap W_3$, we get that $p_1^1 P_1 x y P_2 p_1^2$ is a red $(p_1^1, p_1^2)$-path with no edge contained in $W_1$. We can choose $y \in V(P_2) \cap W_3$ in $\Vert P_2 \Vert - 1$ ways (every vertex in $P_2$, except its endpoints), hence, $xy P_2 p_1^2$ can be chosen of every length between $2$ and $\Vert P_2 \Vert$. Using (\ref{equation:p1p2equation5}), this implies the existence of red $(x, p_1^2)$-paths of every length between $2$ and $2k-6$.

Using these paths and the $(p_1^2, p_1^1)$-paths in $W_1$, we deduce the existence of red cycles in $G$ of every length between $6$ and $3k-5$. We conclude that $3k-5 < 2k+1$, which is false for $k \ge 6$. This contradiction proves the lemma.
\end{proof}

Now we show that both $W_1$ and $W_2$ have no more than $2k$ vertices each.

\begin{lemma} \label{lemma:w1w2small}
$|W_i| \leq 2k$ for all $i \in \{1,2\}$.
\end{lemma}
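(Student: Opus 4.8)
The plan is to assume for contradiction that $|W_i| \ge 2k+1$ for some $i \in \{1,2\}$ and to exhibit either a red copy of $C_{2k+1}$ (forbidden by hypothesis) or two vertex-disjoint red $(W_1,W_2)$-paths (forbidden by \Cref{lemma:nodisjointw1w2redpaths}). My main tool is \Cref{lemma:pancyclicalmostbipartite}: if I can find a set $A$ of $k+1$ vertices spanning at least one red edge and a disjoint set $B$ of $k$ vertices with $E(A,B)\subseteq E^R(G)$, then $G^R[A\cup B]$ is pancyclic and I obtain a red $C_{2k+1}$. This settles the regime $|X_i|\ge k+1$ at once, and in particular all of $W_1$ since $|X_1|\ge k+1$: take $A$ to be any $(k+1)$-subset of $X_i$ and $B$ any $k$-subset of $W_i\setminus A$ (possible as $|W_i|\ge 2k+1$). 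Then $A$ spans a red edge because $X_i$ is a red clique, and $E(A,B)$ is entirely red because every vertex of $W_i$ is red-adjacent to all of $X_i$ by definition of the hedgehog $(W_i,X_i)$.

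The only surviving case is $i=2$ with $|X_2|=k$, so that $Y:=W_2\setminus X_2$ has at least $k+1$ vertices; by \Cref{lemma:rededgeswiwj} each vertex of $Y$ has a red neighbour in $X_1\cup X_3$. The first step is to show that no vertex $p\in X_1\cup X_3$ can have two red neighbours $y_1,y_2\in Y$. If such a $p$ existed, I would build a red $(2k+1)$-cycle by threading the $k$ vertices of the red clique $X_2$ through $Y$, using a single edge of $X_2$ to correct the parity; explicitly $p\,y_1\,a_1 a_2\,z_1\,a_3\,z_2\cdots a_k\,y_2\,p$, with the $a_j$ ranging over $X_2$ and the $z_\ell$ over further vertices of $Y$, every edge of which is red (the $Y$–$X_2$ edges by the hedgehog, $a_1a_2$ inside the clique $X_2$, and $py_1,py_2$ by assumption). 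Hence each vertex of $X_1\cup X_3$ has at most one red neighbour in $Y$.

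The second step uses this scarcity together with \Cref{lemma:nodisjointw1w2redpaths}. If two distinct vertices of $Y$ had red neighbours in $X_1$, those neighbours would necessarily be distinct (each $X_1$-vertex now having at most one red neighbour in $Y$), producing two disjoint red edges between $W_2$ and $W_1$, i.e.\ two disjoint red $(W_1,W_2)$-paths, contradicting \Cref{lemma:nodisjointw1w2redpaths}. Thus at most one vertex of $Y$ has a red neighbour in $X_1$, so at least $k$ vertices of $Y$ have a red neighbour in $X_3$, and by the at-most-one property they attach to pairwise distinct vertices of $X_3$. Choosing two such vertices $y,y'$ with distinct red neighbours $z,z'\in X_3$ and using the red clique edge $zz'$, I close up through $X_2$ to form the red $(2k+1)$-cycle $y\,z\,z'\,y'\,a_1 w_1 a_2 w_2\cdots a_{k-1}\,y$, a final contradiction. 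This proves $|W_2|\le 2k$.

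The crux — and the reason the easy argument fails here — is the case $|X_2|=k$ with $Y$ large: when $Y$ spans no red edge the red graph induced on $W_2$ has circumference exactly $2k$, so no red $C_{2k+1}$ lives inside $W_2$ and one is forced to recruit a single extra vertex from $X_1\cup X_3$. The difficulty is that any such vertex is joined in blue to all of $X_2$, so it cannot be inserted into the interleaving cycle in place of an $X_2$-vertex; it can only be entered and left through $Y$. Managing how these external vertices attach to $Y$, and invoking \Cref{lemma:nodisjointw1w2redpaths} exactly when two independent attachments to $X_1$ would otherwise appear, is the heart of the matter; the explicit parity-correcting cycles above are what convert a single such attachment into the forbidden $C_{2k+1}$.
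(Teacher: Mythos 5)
Your proof is correct, and after a shared opening it takes a genuinely different route from the paper's in the hard case $|X_2| = k$. Both arguments dispose of the regime $|X_i| \ge k+1$ with the same interleaving trick (the paper phrases it via $C_{2k+1} \subseteq K_{k,k,1}$ with parts carved from $X_1$, you via \Cref{lemma:pancyclicalmostbipartite}), and both then establish the same scarcity claim --- the paper's (\ref{equation:la5}), your first step --- that no vertex outside $W_2$ has two red neighbours in $Y := W_2 \setminus X_2$, proved by essentially the identical parity-corrected red cycle threading $X_2$ through $Y$. The endgames diverge. The paper first notes (again via \Cref{lemma:pancyclicalmostbipartite}) that $Y$ is a blue clique, fixes a single $x_3 \in X_3$, uses \Cref{lemma:nodisjointw1w2redpaths} together with (\ref{equation:la5}) to show the red edges between $X_1$ and the blue neighbourhood of $x_3$ in $W_2$ form a star centred in $W_2$, deletes that centre, and finishes with a \emph{blue} contradiction: a blue $C_{2k+1}$ through $X_1 \cup X_2$ and two residual $Y$-vertices, with hub $x_3$, giving a blue $W_{2k+1}$. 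You instead invoke \Cref{lemma:rededgeswiwj} (which the paper's proof of this lemma never touches) to force at least $|Y|-1 \ge k$ vertices of $Y$ to attach in red to pairwise distinct vertices of $X_3$ --- at most one $Y$-vertex can attach to $X_1$, since by scarcity two such attachments would be vertex-disjoint red $(W_1,W_2)$-paths --- and you close a \emph{red} $C_{2k+1}$ through two attachment points and the red clique edge $zz'$ inside $X_3$ (red by \Cref{lemma:xiastinducescompleteredgraphs}). Your route buys independence from the colours inside $Y$ (you never need the blue-clique structure, and your remark about $Y$ spanning no red edge is motivation only, never used) and avoids the star-centre bookkeeping, at the price of leaning on the maximality of the triple $(X_1,X_2,X_3)$; it also incidentally forces $|X_3| \ge k$ in this configuration, whereas the paper's argument runs with $|X_3| \ge 1$. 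Your vertex counts check out: $p\,y_1\,a_1a_2\,z_1a_3\cdots a_k\,y_2\,p$ has $3 + k + (k-2) = 2k+1$ vertices and $y\,z\,z'\,y'\,a_1w_1\cdots a_{k-1}\,y$ has $4 + (k-1) + (k-2) = 2k+1$, each requiring only $k-2 \le |Y|-2$ spare vertices of $Y$, available since $|Y| \ge k+1$.
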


\begin{proof}
Recall that $(W_1, X_1)$ is a red hedgehog. As $|X_1| \ge k+1$, we can choose $R_1, R_2 \subseteq X_1$ disjoint in a way such that $|R_1| = 1$ and $|R_2| = k$. Let $R_3 = W_1 \setminus (R_1 \cup R_2)$. Then these three subsets are disjoint and the edges between $E(R_i, R_j)$ are all red if $i \neq j$ in $\{1,2,3\}$. If $|W_1| > 2k$, then $|R_3| \ge k$ so we can easily find a red copy of $C_{2k+1}$, a contradiction.

If $|X_2| \ge k+1$ the conclusion follows from the same argument just presented, replacing $(W_1, X_1)$ with $(W_2, X_2)$. So it suffices to study the case where $|X_2| = k$ but $|W_2| \ge 2k+1$. By  \Cref{lemma:pancyclicalmostbipartite}, every edge contained in $W_2 \setminus X_2$ must be blue, and we get a complete blue subgraph of size at least $k+1$.

We claim that \begin{align}\text{each vertex outside $W_2$ has at most one red neighbor in $W_2 \setminus X_2$.} \label{equation:la5}
\end{align} Indeed, if this were not the case, we could find a red $(2k+1)$-cycle, using the complete bipartite red subgraph in $E(X_2, W_2 \setminus X_2)$ and an arbitrary red edge in $X_2$. Let $x_3$ be any vertex in $X_3$, and let $Y_2$ be the set of blue neighbors of $x_3$ in $W_2$. Because of \Cref{lemma:nodisjointw1w2redpaths}, there are no two disjoint red edges between $Y_2$ and $X_1$. By construction there are no red edges between $X_1$ and $X_2$, and by (\ref{equation:la5}) we conclude that no vertex in $X_1$ has two red neighbors in $Y_2$. Then the red edges in $E(Y_2, X_1)$ form a (possibly empty) star with its center in $Y_2$. Deleting this (possible) center of the star, we obtain a set $Y' \subseteq Y_2$ such that both $E(Y', x_3)$ and $E(Y', X_1)$ only contain blue edges. Furthermore, $E(X_1, X_2)$ and $E(x_3, X_1 \cup X_2)$ also contain only blue edges. We have that $|Y'| \ge 2$. Using at most two vertices of $Y'$ we find a blue cycle of length $2k+1$ in $X_1 \cup X_2 \cup Y'$, and joining $x_3$ we obtain a blue copy of $W_{2k+1}$, a contradiction.
\end{proof}

We now prove a lemma that will allow us to conclude \Cref{theorem:mainresult} afterwards.

\begin{lemma} \label{lemma:fuckinglemma}
Let $W, W'$ be disjoint sets of vertices in $V(G)$ such that \begin{enumerate}[label=\normalfont{\Alph*.}] \item there exists $X \subseteq W$ of size at least $k-1$ such that $(W,X)$ is a red hedgehog, \item $W'$ has size at least $3k+2$, \item there exists $X' \subseteq W'$ of size at least $k$ such that $X'$ induces a red clique, \item at least one of the following two hypothesis holds:
	\begin{enumerate}[label=\normalfont{D\arabic*.}]\item $W$ and $W'$ cover all vertices of $V(G)$ except at most one, and every edge between $W$ and $W'$ is blue, or \item $W$ and $W'$ cover all vertices of $V(G)$, and there exists a vertex in $W$ that only sends blue edges outside $W$. \end{enumerate}
\end{enumerate} Then there exists a partition $\{V_0, V_1, V_2, V_3\}$ of $V(G)$ such that $|V_0| \leq 2$, every edge inside the partition classes $\{ V_1, V_2, V_3 \}$ is red, and every edge between the partition classes $\{ V_1, V_2, V_3 \}$ is blue.
\end{lemma}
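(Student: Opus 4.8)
The plan is to use the large set $W'$ to manufacture two of the three red cliques, to let $W$ supply the third, and to absorb a bounded number of ``bad'' vertices into $V_0$. First I would analyse the coloring induced on $W'$. Since $W' \subseteq V(G)$, there is no red $C_{2k+1}$ in $G[W']$. I claim there is also no blue $C_{2k+1}$ in $G[W']$: under hypothesis D1 every vertex of $W$, and under hypothesis D2 the distinguished vertex sending only blue edges outside $W$, is joined by blue edges to all of $W'$, so a blue $C_{2k+1}$ inside $W'$ would extend (using that hub, which lies outside $W'$) to a blue $W_{2k+1}$, a contradiction. As $|W'| \ge 3k+2$, \Cref{lemma:prenikiforovschelp} applies to $G[W']$ and yields a color $C$ and a partition $\{Y_1, Y_2\}$ of $W'$ with $E(Y_i, Y_i) \subseteq E^C(G)$ and no two disjoint $C$-colored edges in $E(Y_1, Y_2)$. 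Using the red clique $X' \subseteq W'$ of size at least $k \ge 3$: if $C = B$ then $Y_1, Y_2$ are blue cliques, so $X'$ meets each in at most one vertex and $|X'| \le 2$, a contradiction. Hence $C = R$, so $Y_1$ and $Y_2$ are red cliques; since a red clique cannot contain a red $C_{2k+1}$ we get $|Y_i| \le 2k$, and with $|Y_1| + |Y_2| \ge 3k+2$ this forces $k+2 \le |Y_i| \le 2k$ for both $i$.

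Next I would locate the defect edges (blue edges inside a prospective class, red edges between two prospective classes) and show they are covered by a set $Z$ with $|Z| \le 2$. The key mechanism is \Cref{corollary:nodisjointrededges}: the pairs $(W, X)$, $(Y_1, Y_1)$, $(Y_2, Y_2)$ are disjoint red hedgehogs whose cores have size at least $k-1 \ge 3$, so two vertex-disjoint red edges between any two of $W, Y_1, Y_2$ would produce red cycles of all lengths up to at least $(k-1)+(k+2) = 2k+1$, hence a red $C_{2k+1}$. Therefore the red edges between each pair of classes form a star. Under D1 all $W$--$W'$ edges are blue, so the only such star is between $Y_1$ and $Y_2$, and $Z$ can be taken to be its single center (so $|Z| \le 1$). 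Under D2 there may in addition be red stars between $W$ and $Y_1$ and between $W$ and $Y_2$, and here the main work is a short case analysis showing that these three stars are jointly covered by at most two vertices. Concretely, if the $W$--$Y_1$ and $W$--$Y_2$ stars have distinct centers $a_1, a_2$ in $W$ and the $Y_1$--$Y_2$ star is nonempty, then combining the red edges $a_1 y_1$, $a_2 y_2$ with a red $Y_1$--$Y_2$ edge and the three red cliques gives, via \Cref{lemma:joininghedgehogbylongpaths}, a red $C_{2k+1}$; the surviving configurations leave the centers coverable by two vertices (a common $W$-side center together with the $Y_1$--$Y_2$ center, or the two $W$-side centers when the $Y_1$--$Y_2$ star is empty). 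This case distinction is the principal obstacle of the proof.

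Finally I would check that, after removing $Z$, the three classes are red cliques. For $Y_1 \setminus Z$ and $Y_2 \setminus Z$ this is immediate. For $W \setminus Z$, suppose there were a blue edge $uv$ with $u, v \in W \setminus Z$. Since every red $W$--$W'$ edge is incident to $Z$, both $u$ and $v$ are blue-complete to $W' \setminus Z$, and $|Y_i \setminus Z| \ge (k+2) - 2 = k$. Taking $A \subseteq Y_1 \setminus Z$ and $B \subseteq Y_2 \setminus Z$ with $|A| = |B| = k$, the blue graph on $(\{u\} \cup B) \cup A$ has complete bipartite parts of sizes $k+1$ and $k$ together with the internal blue edge $ub$, so \Cref{lemma:pancyclicalmostbipartite} gives a blue $C_{2k+1}$ on these $2k+1$ vertices; the vertex $v$ is blue-adjacent to all of them, producing a blue $W_{2k+1}$, a contradiction. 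Hence $W \setminus Z$ is a red clique. Setting $V_1 = W \setminus Z$, $V_2 = Y_1 \setminus Z$, $V_3 = Y_2 \setminus Z$, and $V_0 = Z$ (enlarged by the at most one uncovered vertex in case D1, which keeps $|V_0| \le 2$ since there $|Z| \le 1$), the red edges between classes have all been placed in $Z$ and the blue edges inside $W$ eliminated, so every edge inside $V_1, V_2, V_3$ is red and every edge between them is blue, as required.
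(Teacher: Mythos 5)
Your overall architecture coincides with the paper's: you apply \Cref{lemma:prenikiforovschelp} to $G[W']$ (using the blue-complete vertex of $W$ to rule out a blue $C_{2k+1}$ there), force $C=R$, derive $k+2 \le |Y_i| \le 2k$, invoke \Cref{corollary:nodisjointrededges} on the red hedgehogs $(W,X)$, $(Y_1,Y_1)$, $(Y_2,Y_2)$ to turn all red cross-edges into stars, dispose of D1 with one star center plus the uncovered vertex, and verify at the end that $W$ minus the deleted set is a red clique via \Cref{lemma:pancyclicalmostbipartite} --- that final step is exactly the paper's. The one place you diverge, and the place you yourself flag as the crux, is the covering argument in case D2, and there your stated dichotomy fails. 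The implication ``if the $W$--$Y_1$ and $W$--$Y_2$ stars have distinct centers $a_1,a_2 \in W$ and the $Y_1$--$Y_2$ star is nonempty, then there is a red $C_{2k+1}$'' is false in degenerate configurations: let all three stars be single edges $a_1c$, $cy_2$, $y_2a_2$ with $c \in Y_1$, $y_2 \in Y_2$. Then every red edge leaving $Y_1$ passes through $c$ and every red edge leaving $Y_2$ passes through $y_2$, so a red cycle cannot use the interiors of $Y_1$ or $Y_2$ at all (it would have to enter and exit through the same vertex); the only cycle through the three cross-edges is $a_1cy_2a_2$ closed by a red path inside $W$, and \Cref{lemma:joininghedgehogbylongpaths} only guarantees such paths up to length $|X|-1$, giving total length at most $k+1 < 2k+1$. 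Your construction degenerates precisely because the three chosen edges may share endpoints, i.e.\ induce a connected subgraph. In this configuration the conclusion you want is still true --- $\{c,y_2\}$ covers all three stars --- but it appears in neither of your listed ``surviving configurations'', which moreover never address the possibility that a star's center lies on the $Y_1$- or $Y_2$-side with several leaves in $W$. So either your implication is wrong (if centers are read as lying in $W$) or your case enumeration is incomplete (if they are read as lying in $W'$); in both readings a hole remains at the step you identify as the principal obstacle.

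The paper avoids this by making connectivity, not the location of the centers, the case split: if one can choose three red edges, one between each pair of $\{Y_1, Y_2, W\}$, inducing a \emph{disconnected} subgraph, then one of them is disjoint from the other two, and routing through the three hedgehogs via \Cref{lemma:joininghedgehogbylongpaths} yields red cycles of every length from $9$ up to $2k+2$, hence a red $C_{2k+1}$; otherwise every such triple is connected, which forces the three star centers to lie on a single red edge, and taking $V_0$ to be the two endpoints of that edge kills all red cross-edges at once. Note how this dichotomy absorbs your bad configuration: the path $a_1cy_2a_2$ is connected, and the edge $cy_2$ carries all three centers. To repair your write-up, replace the center-based case distinction by this (or an equivalent) connectivity dichotomy; as written, the shared-endpoint configurations slip through both branches of your analysis.
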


\begin{proof}
Let $H$ be the graph formed by the vertices in $W'$ with the induced edge coloring from $G$. By hypothesis B, $|V(H)| \ge 3k+2$. By \Cref{lemma:ramseyforcicles}, $r(C_{2k+2}, C_{2k+2}) = 3k+2$, so there is a monochromatic copy of $C_{2k+2}$ in $H$.

Note that both hypothesis D1 and D2 imply the existence of a vertex in $W$ that only sends blue edges to $H$. Now, $H$ does not contain a monochromatic $C_{2k+1}$, neither in red nor in blue (any vertex $v \in W$ that only sends blue edges to $H$ together with a blue $C_{2k+1}$ in $H$ would form a blue $W_{2k+1}$). So $H$ satisfies the hypotheses of \Cref{lemma:prenikiforovschelp}, and there exists a partition $\{Y_1, Y_2\}$ of $V(H)$ and a color $C \in \{R,B\}$ such that

\begin{enumerate} \item $Y_1$ and $Y_2$ both induce complete $C$-colored subgraphs, and \item $E(Y_1, Y_2)$ does not contain two disjoint $C$-colored edges. \end{enumerate}

Hence the edges in $E(H)$ that are not $C$-colored form a bipartite graph. By hypothesis C, there exists a red complete subgraph in $H$ of size at least $k \ge 3$, so the red edges in $H$ cannot form a bipartite subgraph. So, $C = R$. Using the fact that $|Y_1|, |Y_2| \leq 2k$ and hypothesis B, we deduce that $\min \{ |Y_1|, |Y_2| \} \ge k+2$. Define $Y_3 := W$. We have that $(Y_1, Y_1)$, $(Y_2, Y_2)$ and $(Y_3, X)$ are red hedgehogs satisfying the hypothesis of \Cref{corollary:nodisjointrededges}. So, there are no two disjoint red edges between each pair in $\{Y_1, Y_2, Y_3\}$, and so, the red edges induce a (possibly empty) red star between each of the pairs.

\textbf{Case A: Hypothesis D1 holds}: Then there can only be red edges between $Y_1$ and $Y_2$. So, choosing $V_0$ as the vertices not covered by $W \cup W'$ (at most one) together with the (possible) center of the red star between $Y_1$ and $Y_2$, we get that the edges between each pair in $\{Y_1 \setminus V_0, Y_2 \setminus V_0, Y_3 \setminus V_0 \}$ are all blue.

\textbf{Case B: Hypothesis D2 holds}: Deleting the centers of the three red stars between the pairs in $\{Y_1, Y_2, Y_3\}$ eliminates every red edge between these sets. We want to select at most two vertices in $V_0$, so it suffices to study the case where there is a red edge between each pair in $\{Y_1, Y_2, Y_3\}$.

Suppose first that we can find three red edges, one between each distinct pair of $\{ Y_1, Y_2, Y_3 \}$, such that the graph induced by these three edges is disconnected. Let $e_{ij}$ be the selected edges between $Y_i$ and $Y_j$, for each distinct $i, j \in \{1,2,3\}$. The red hedgehogs $Y_1, Y_2$ and $Y_3$ contain complete subgraphs of size at least $k+2$, $k+2$ and $k-1$, respectively. So, each edge $e_{ij}$ is adjacent to two hedgehogs with complete subgraphs of size at least $k-1$ and $k+2$. If the edges $\{e_{ij}\}_{i\neq j}^3$ induce a disconnected subgraph, we can choose an edge disjoint to the other two, and using \Cref{corollary:nodisjointrededges} we can join the endpoints of the disjoint edges in $\{e_{ij}\}_{i\neq j}^3$ with paths of every length between $2$ and $k-2$ or $k+1$, respectively. Using these paths we can find red cycles of every length between 9 and $2k+2$, in particular, a red copy of $C_{2k+1}$, a contradiction.

So, for every pair of three red edges $e_{ij} \in E(Y_i, Y_j)$ with $i < j \in \{1,2,3\}$; the graph induced by these three edges is connected. Every red edge between $\{ Y_1, Y_2, Y_3 \}$ is part of one of the three red stars, and so, contains at least one of the centers of these stars. If no edge contains the three centers, then we easily find three red edges between pairs in $\{Y_1, Y_2, Y_3\}$ inducing a disconnected subgraph, a contradiction. If one of these edges contain three centers of the stars, then choosing $V_0$ as the vertices of this edge, we get that every edge between each pair in $\{Y_1 \setminus V_0, Y_2\setminus V_0, Y_3\setminus V_0\}$ is blue, as required.

In every case: define $V_i := Y_i \setminus V_0$. As $Y_2$ and $Y_3$ are red complete graphs, it only remains to show that $Y_3$ is a complete red subgraph. If this were not the case, there is a blue edge $e = xy$ in $Y_3$. As $x$ only sends blue edges to $V_1 \cup V_2$, and each of $V_1$ and $V_2$ has size at least $k+1$, we find a blue $C_{2k+1}$ in $x \cup Y_1 \cup Y_2$. Adding $y$, we obtain a blue copy of $W_{2k+1}$, a contradiction.
\end{proof}

Recall that we have a partition of $V(G)$ in $\{W_1, W_2, W_3\}$ and there exists sets $X_i$ for each $i \in \{1,2,3\}$ such that $(W_i, X_i)$ are red hedgehogs for $i \in \{1,2,3\}$; we also have that $|X_1| \ge k+1$, $|X_2| \ge k$ and $|X_3| \ge 1$. By \Cref{lemma:w1w2small}, the sets $W = W_1$, $X = X_1$, $W' = V(G) \setminus W_1$ and $X' = X_2$ satisfy hypothesis A, B and C of \Cref{lemma:fuckinglemma}. The same holds if we replace the role of $W_1$ with $W_2$ and $X_1$ with $X_2$. So we can conclude \Cref{theorem:mainresult} immediately if hypothesis D2 of \Cref{lemma:fuckinglemma} is satisfied by $W_1$ or $W_2$. So, we may assume that \begin{align}
\text{every vertex in $W_i$ has a red neighbor outside $W_i$, for each $i \in \{1,2\}$.} \label{equation:rededgesfromwi}
\end{align}

By \Cref{lemma:nodisjointw1w2redpaths}, there are no two disjoint red $(W_1, W_2)$-paths. By Menger's Theorem applied to the graph $G^R$, we obtain that the size of a minimum $(W_1, W_2)$-separator in $G^R$ is at most one. Let $S \subseteq V(G)$ be such a separator. We separate the rest of the proof in two cases: there exists a red edge in $E(W_1, W_2)$ or not.

\textbf{Case A: There exists a red edge in $E(W_1, W_2)$}. Let $e = e_1 e_2$ be such an edge, with $e_i \in W_i$ for $i \in \{1,2\}$. As there are no two disjoint $(W_1, W_2)$-paths by \Cref{lemma:nodisjointw1w2redpaths}, the red edges in $E(W_1, W_2)$ form a non-empty star. Let $e_i$ be the center of this star. Every vertex in $W_i$, other than $e_i$, must send a red edge outside $W_i$ because of (\ref{equation:rededgesfromwi}), so it must be sent to $W_3$. The red edges in $W_3$ induce a connected graph, so if there were a red edge in $E(W_3, W_{3-i})$ disjoint from $e_{3-i}$, we could find two disjoint red $(W_1, W_2)$-paths, which is not possible. Using (\ref{equation:rededgesfromwi}) again, we see that every vertex in $W_{3-i}$ must send a red edge to $e_i \in W_i$. So the sets $W = W_{3-i}$, $X = X_{3-i}$, $W' = V(G) - W - e_i$ and $X' = X_i - e_i$ satisfy hypotheses A, B, C and D1 of \Cref{lemma:fuckinglemma}, and we are done.


\textbf{Case B: Every edge in $E(W_1, W_2)$ is blue}. Remember that $S$ is a $(W_1, W_2)$-separator in $G^R$ of size at most one. It is not empty, because by (\ref{equation:rededgesfromwi}) every vertex in $W_1$ and $W_2$ has a red neighbor in $W_3$ and the red edges in $W_3$ form a connected subgraph, and this implies that there exists, at least, one red $(W_1, W_2)$-path. Furthermore, $S$ cannot be contained outside $W_3$, because deleting any vertex in $W_1 \cup W_2$ does not eliminate all the red $(W_1, W_2)$-paths. Therefore, we have $S = \{s\} \subseteq W_3$.

Suppose that $G^R[W_3] \setminus \{s\}$ is connected. Then there exists $i \in \{1,2\}$ such that $E(W_3 \setminus \{s\}, W_i )$ only contains blue edges, as otherwise there would still be red $(W_1, W_2)$-paths in $G^R - S$. Then the sets $W = W_i$, $X = X_i$, $W' = (W_3 \cup W_2) \setminus S$ and $X' = X_{3-i}$ satisfy hypotheses A, B, C and D1 of \Cref{lemma:fuckinglemma}, therefore concluding \Cref{theorem:mainresult}. So, we may suppose that $G^R[W_3] \setminus \{s\}$ is disconnected. Hence, $S = X_3$.

By \Cref{lemma:rededgeswiwj}, every vertex in $W_3 \setminus S$ has a red neighbor in $W_1$ or $W_2$. It cannot have red neighbors in both $W_1$ and $W_2$, as that would form a red $(W_1, W_2)$-path in $G-S$. So, \begin{align*}
Y_1 & := \{ v \in W_3 \setminus S : v \text{ has a red neighbor in } W_1 \} \text{ and} \\
Y_2 & := \{ v \in W_3 \setminus S : v \text{ has a red neighbor in } W_2 \},
\end{align*} together partition $W_3 \setminus S$. Furthermore, we have that every edge in $E(Y_1, W_2)$, $E(Y_2, W_1)$ and $E(Y_1, Y_2)$ is blue. So, we have a partition $\{ W_1 \cup Y_1, W_2 \cup Y_2, S \}$ of $V(G)$ such that every edge between $W_1 \cup Y_1$ and $W_2 \cup Y_2$ is blue. Suppose $|W_1 \cup Y_1| \leq |W_2 \cup Y_2|$ (otherwise, the proof is similar).

If $|W_1 \cup Y_1| \le 2k$, then $|W_2 \cup Y_2| \ge N - 1 - 2k \ge 3k+2$. Applying \Cref{lemma:fuckinglemma}, with $W = W_1 \cup Y_1$, $X = X_1$, $W' = W_2 \cup Y_2$ and $X' = X_2$; we conclude the theorem.

Suppose then that $|W_1 \cup Y_1| > 2k$. Since $W_1 \cup Y_1$ cannot contain a red $C_{2k+1}$, it contains a blue edge $e = v_1 v_2$. Consider $H := G[W_2 \cup Y_2]$. If there exists a vertex $w \in V(H)$ with at least $k$ blue neighbors in $H$, then by \Cref{lemma:pancyclicalmostbipartite}, we could find a blue $(2k+1)$-cycle which together with $w$ form a blue $(2k+1)$-wheel, which is impossible. Then, we have $\Delta^B(H) \leq k-1$, and $\delta^R(H) \ge |H|-k$.

We have that $|H| \ge |W_1 \cup Y_1| > 2k$. So, $\delta^R(H) > \frac{1}{2}|H|$ and by \Cref{corollary:bondy2}, $H^R$ is pancyclic, and thus contains a red copy of $C_{2k+1}$, a contradiction.

\section{Proof of \Cref{theorem:secondaryresult}} \label{section:proofofsecondarytheorem}

Both of the bounds of \Cref{theorem:secondaryresult} will follow from a more general type of bound.

\begin{definition}
Given two reals $\alpha \in [\frac{1}{4},1)$ and $\beta > 0$, we say that $(\alpha, \beta)$ is an \emph{admissible pair} if every 2-connected non-bipartite graph $G = (V,E)$ with $|V|=n$ and \[ \delta(G) \ge \alpha n + \beta \] contains every cycle $C_t$, for every $t$ such that $6 \leq t \leq c(G)$.
\end{definition}

Note that the non-bipartiteness of the graph is useful in the definition, because otherwise $K_{n/2,n/2}$ is a graph with $\delta(G) \ge n/2$ that is not weakly pancyclic. Given \Cref{lemma:boundongirth} and $\alpha \ge 1/4$, the condition of containing every cycle with length between $6$ and $c(G)$ is slightly weaker than being weakly pancyclic, for graphs with at least $9$ vertices, and it can be checked by inspection that the cycle condition is also satisfied by non-bipartite graphs with 8 vertices or less.

Brandt et al. \cite{brandt-faudree-goddard} proved some theorems concerning the values of $(\alpha, \beta)$ that assure weak pancyclicity of the graph, with or without the requirement of 2-connectedness.

\begin{theorem}[Brandt et al. \cite{brandt-faudree-goddard}] \label{lemma:brandt1}
Every non-bipartite graph of order $n$ with minimum degree $\delta(G) \ge \frac{n+2}{3}$ is weakly pancyclic with girth at most 4.
\end{theorem}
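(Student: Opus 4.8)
The plan is to prove the two assertions—girth at most $4$, and weak pancyclicity—somewhat separately, reducing the second to the $2$-connected case. I would first dispose of the girth by a Moore-type count that does not even use non-bipartiteness. If $g(G) \ge 5$, then for any vertex $v$ the set $\{v\}$, its neighborhood $N(v)$, and its second neighborhood are pairwise disjoint: no two neighbors of $v$ are adjacent (else a triangle), and two neighbors share no common neighbor other than $v$ (else a $C_4$). Counting gives $n \ge 1 + \delta(G) + \delta(G)(\delta(G)-1) = 1 + \delta(G)^2$, and substituting $\delta(G) \ge (n+2)/3$ yields $n \ge 1 + (n+2)^2/9$, i.e. $n^2 - 5n + 13 \le 0$, which has no real solutions. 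Hence $g(G) \le 4$. Non-bipartiteness is what makes the rest meaningful: it guarantees an odd cycle, so the cycle spectrum is nonempty and the target of weak pancyclicity is the full integer interval $[g(G), c(G)]$, with the parity obstruction of $K_{n/2,n/2}$ excluded.

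Next I would handle the connectivity reduction, which I expect to be the easy case. If $G$ has a cut vertex $v$, then each component of $G-v$ has at least $\delta(G) \ge (n+2)/3$ vertices, forcing exactly two components; the block $B$ carrying the odd cycle then has order $n_B \le (2n-5)/3$, while every non-cut vertex of $B$ retains degree $\ge \delta(G)$ inside $B$. Thus the ratio $\delta(G)/n_B$ exceeds (and tends to) $1/2$, so $B$ is extremely dense relative to its own order, and pancyclicity follows from Bondy's theorem (\Cref{lemma:bondy}) after minor care for the single cut vertex (e.g. passing to $B-v$ or to a version tolerating one low-degree vertex); since $B$ is non-bipartite it is not a balanced complete bipartite graph, so it is pancyclic. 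Hence the interesting regime is when $G$ is $2$-connected, where $\delta(G) \ge (n+2)/3$ lies well below the Dirac/Bondy threshold $n/2$.

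In the $2$-connected case I would first produce a long cycle via Dirac's circumference theorem (\Cref{lemma:dirac}): $c(G) \ge \min\{2\delta(G), n\} \ge \min\{(2n+4)/3,\, n\}$. The plan is then a downward interpolation: prove that whenever $G$ has a cycle of length $\ell$ with $g(G) < \ell \le c(G)$, it also has one of length $\ell-1$; starting from a cycle of length $c(G)$ and descending, this fills the entire interval $[g(G), c(G)]$, the base cycle of length $g(G)$ existing by definition of the girth. A clean sufficient move is a chord of the current cycle $C = v_0 \cdots v_{\ell-1}$ joining two vertices at cyclic distance $2$, say $v_i v_{i+2}$: discarding $v_{i+1}$ and traversing the long arc produces a $C_{\ell-1}$. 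More generally, a short ear through a vertex off $C$ reroutes the cycle and shifts its length by one.

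The main obstacle is precisely this interpolation step, since a length-$\ell$ cycle need not carry a distance-$2$ chord or a convenient ear, so no single local move is guaranteed to decrement the length by exactly one. This is where both hypotheses must be used together. I would take a longest cycle $C$ and a shortest odd cycle $C'$; by $2$-connectivity and Menger's theorem they are joined by two vertex-disjoint paths, and splicing $C$, $C'$ and these paths produces cycles whose lengths interpolate across the parity gap between them. The minimum-degree bound then forces many neighbors of $V(C)$ to lie either on $C$ (yielding short chords) or in a small controlled off-cycle set (yielding ears), so that some length-decreasing move is always available. Making this quantitative—showing that the \emph{absence} of a $C_{\ell-1}$ would force so many forbidden adjacencies that $\delta(G) < (n+2)/3$, contradicting the hypothesis—is the technical heart and the step I expect to consume most of the work. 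The extremal configurations at the boundary, which make both the constant $(n+2)/3$ and the girth bound $4$ sharp, must be tracked carefully, and non-bipartiteness is indispensable here to exclude the balanced complete bipartite graph, for which the interpolation genuinely fails.
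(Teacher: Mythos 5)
This statement is imported verbatim from Brandt--Faudree--Goddard and the paper offers no proof of it, so your attempt can only be judged on its own merits. Two of your three pieces are sound. The girth bound is correct and complete: if $g(G)\ge 5$ then the Moore-type count $n \ge 1+\delta+\delta(\delta-1)=1+\delta(G)^2$ holds, and $\delta(G)\ge\frac{n+2}{3}$ gives $n^2-5n+13\le 0$, which has negative discriminant; in fact your argument needs only $\delta(G)>\sqrt{n-1}$, far less than the hypothesis. The cut-vertex reduction also goes through after a small repair: with a cut vertex $v$, each component $A_i$ of $G-v$ has order at least $\delta(G)$, so there are exactly two; since $3\delta(G)\ge n+2$ and $|A_i|\le n-\delta(G)-1$, every vertex of $A_i$ has degree greater than $|A_i|/2$ inside $A_i$, so each $A_i$ is pancyclic by \Cref{corollary:bondy2} (no appeal to non-bipartiteness or to "a version of Bondy tolerating one low-degree vertex" is needed), and since any cycle of $G$ lies in one block and hence has length at most $\max_i|A_i|+1$, the whole interval $[3,c(G)]$ is covered. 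Note your phrase "the block carrying the odd cycle" is a red herring here: both components are handled symmetrically.

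The genuine gap is exactly where you locate it yourself: the interpolation step in the $2$-connected case, which \emph{is} the theorem. The claim that every cycle of length $\ell>g(G)$ yields one of length $\ell-1$ is unsupported; a distance-$2$ chord or a convenient ear need not exist, and your fallback—splicing a longest cycle with a shortest odd cycle along two Menger paths—produces only the handful of lengths determined by the four arc sums, not the full interval $[g(G),c(G)]$. Likewise, "absence of $C_{\ell-1}$ forces $\delta(G)<\frac{n+2}{3}$" is a restatement of the conclusion, not an argument: nothing in the proposal extracts a degree contradiction from a single missing cycle length. That this step cannot be cheap is signalled by \Cref{lemma:brandt2}: at minimum degree roughly $n/4$ a genuine exceptional family appears (odd girth $7$, with precisely the $5$-cycle missing from the spectrum), so any downward-induction scheme must detect where between $n/4$ and $n/3$ it stops working, and the constant $\frac{n+2}{3}$ is sharp. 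The actual proof of Brandt et al.\ is a substantial multi-lemma analysis of longest cycles and their bridges, building on prior cycle-spectrum results, and none of that machinery is reconstructed here. As it stands, the proposal proves the girth claim and the non-$2$-connected case, but the core of weak pancyclicity remains a plan rather than a proof.
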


This implies that $(\frac{1}{3}, \frac{2}{3})$ is an admissible pair.

\begin{theorem}[Brandt et al. \cite{brandt-faudree-goddard}] \label{lemma:brandt2}
Every non-bipartite 2-connected graph of order $n$ with minimum degree $\delta(G) \ge \frac{n+1000}{4}$ is weakly pancyclic unless $G$ has odd girth $7$, in which case it has every cycle from 4 up to its circumference except the 5-cycle.
\end{theorem}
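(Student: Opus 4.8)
The plan is to control the two ends of the cycle spectrum and then to fill in every intermediate length by a local rerouting argument, treating the odd-girth-$7$ graphs as the single family where this filling-in breaks down at length $5$. First I would dispose of the trivial regime: since $\delta(G) \le n-1$, the hypothesis $\delta(G) \ge \frac{n+1000}{4}$ is vacuous unless $n \ge 335$, and for $n$ only slightly larger the degree bound forces $G$ to be nearly complete, hence pancyclic. Thus I may assume $n$ is large, which is where the slack hidden in the additive constant $1000$ will be spent.

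Next I would establish the two endpoints of the spectrum. For the circumference, \Cref{lemma:dirac} applied to the $2$-connected graph $G$ gives $c(G) \ge \min\{2\delta(G), n\}$; with $\delta(G) \ge \frac{n+1000}{4}$ this yields $c(G) = n$ when $n \le 1000$ and $c(G) > n/2$ otherwise, so $G$ has a long cycle in every case. For the lower end, \Cref{lemma:boundongirth} gives $g(G) \le 5$, and since $G$ is non-bipartite it contains an odd cycle. To pin down the odd girth I would invoke a degree bound for graphs of large odd girth of Andrásfai–Erdős–Sós type: a non-bipartite graph of odd girth at least $9$ has minimum degree at most $\tfrac{2}{9}n < \tfrac{n}{4}$, contradicting the hypothesis. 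Hence the odd girth of $G$ is $3$, $5$, or $7$. In the first two cases the girth and odd girth together supply cycles at the very bottom of the spectrum; in the last case no triangle or $5$-cycle is present, so $g(G)$ is forced to equal $4$, and $G$ will turn out to realise every length from $4$ to $c(G)$ except $5$.

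The core of the argument is an interval-filling (step-down) lemma: if $G$ contains a cycle $C$ of length $\ell$ with $g(G) < \ell \le c(G)$, then $G$ contains a cycle of length $\ell - 1$, save in the controlled situation producing the missing $5$-cycle. I would prove this by a chord-and-detour analysis. Given $C$, pick a vertex $v$ either off $C$ (possible while $\ell < c(G)$, attaching $v$ through the $2$-connectivity) or incident to a chord of $C$; the bound $\delta(G) \ge \frac{n}{4}$ forces $v$ to have many neighbours reachable into $C$, so by pigeonhole two of them lie within a short arc of $C$. Rerouting through $v$ across an arc of length $d$ replaces $C$ by a cycle of length $\ell - d + 2$, and by locating short arcs of two consecutive parities I obtain both $\ell-1$ and $\ell-2$, letting me descend one step at a time. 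Running this descent from the long cycle down to the girth, and separately building up from the short odd cycle found above, closes the interval $[g(G), c(G)]$ — and, in the odd-girth-$7$ case, the interval $[4, c(G)] \setminus \{5\}$ — which is exactly weak pancyclicity (respectively the stated exception). Comparison results such as \Cref{lemma:brandt} and \Cref{lemma:brandt1} indicate that this engine behaves well once the graph is non-bipartite and dense.

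The main obstacle is the step-down lemma in the regime where $\ell$ is close to the girth, precisely where the odd-girth-$7$ exception is born: when the only short odd cycles have length $7$, the rerouting that would normally yield a $5$-cycle instead yields a $4$- or $6$-cycle, and one must prove that no other configuration smuggles a $5$-cycle in, while conversely showing that odd girth $3$ or $5$ really does let the descent reach every length. Managing this transition between the even and odd sub-spectra, and keeping the minimum-degree bookkeeping tight enough that the additive slack $1000$ absorbs all the small-cycle casework, is where the real work lies.
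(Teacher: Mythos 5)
This statement is not proved in the paper at all: it is Theorem~\Cref{lemma:brandt2}, imported verbatim from Brandt, Faudree and Goddard \cite{brandt-faudree-goddard}, and the paper only uses it as a black box to obtain the admissible pair $(\frac{1}{4},250)$. So there is no in-paper proof to match your attempt against; the question is whether your sketch stands on its own, and it does not. Everything you write is a framing of the problem rather than a proof: the two endpoints of the spectrum (Dirac via \Cref{lemma:dirac} for the circumference, \Cref{lemma:boundongirth} plus an Andr\'asfai--Erd\H{o}s--S\'os-type bound for the odd girth) are the easy part, and your central ``step-down lemma'' --- from a cycle of length $\ell$ obtain cycles of lengths $\ell-1$ and $\ell-2$ by rerouting through a vertex with two neighbours on a short arc --- is exactly the hard content of the theorem, and the argument you give for it fails. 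With $\delta(G)\ge\frac{n+1000}{4}$ and $c(G)$ possibly only about $n/2$, a vertex off the cycle is guaranteed $n/4$ neighbours in $G$ but possibly only two or three on $C$, so no pigeonhole places two of its neighbours on a short arc, let alone on arcs of two consecutive parities; and $2$-connectivity only attaches $v$ to $C$ by two paths whose lengths you cannot control, so the rerouted cycle lengths are not $\ell-1,\ell-2$ but essentially arbitrary.

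The paper itself contains the evidence that this naive descent cannot work at the quarter-degree threshold: the example given right after \Cref{lemma:brandt2} (two copies of $K_{m,m}$ sharing a vertex, plus one extra edge) is $2$-connected, non-bipartite, Hamiltonian, has $\delta(G)=\frac{n+1}{4}$, yet is missing \emph{all} even cycles of length greater than $\frac{n+1}{2}$ --- the gaps occur in the middle and top of the spectrum, not near the girth where you localize the difficulty. So any correct proof must use the additive slack ($+1000$) structurally, not as ``bookkeeping,'' and must rule out such near-extremal bipartite-like configurations globally; this is why the original proof of Brandt, Faudree and Goddard is a long structural analysis and why the odd-girth-$7$ exception emerges from specific exceptional families rather than from a local parity obstruction in a generic descent. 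Your proposal also leaves the odd-girth classification resting on an uncited external theorem (non-bipartite with odd girth at least $9$ forces $\delta\le\frac{2n}{9}$), which is fine as a quotation but is again imported, not proved. In short: as a blind proof the attempt has a genuine gap --- the interval-filling lemma is asserted in a form that is false under the stated hypotheses --- and the honest conclusion is that this result should be cited, as the paper does, rather than reproved along these lines.
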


This implies that $(\frac{1}{4}, 250)$ is an admissible pair. Brandt et al. also give an example to show that no admissible pair $(\alpha, \beta)$ has $\alpha < 1/4$. Take two copies of $K_{m,m}$ intersecting in one vertex and join one vertex on the opposite side of the intersection vertex in one $K_{m,m}$ to such a vertex in the other $K_{m,m}$. Then this graph has $n:=4m-1$ vertices, minimum degree $(n+1)/4$, is 2-connected, Hamiltonian and has a triangle, but it is not weakly pancyclic as it does not contain any even cycle of length more than $(n+1)/2$.

We prove a bound on the Ramsey number $r(C_{2k+1}, W_{2j})$ for $k<j$, that depends on the existence of an admissible $(\alpha, \beta)$ pair.

\begin{theorem} \label{theorem:generalsecondaryresult}
Let $(\alpha, \beta)$ be an admissible pair, and $2 < k < j$ integers. Then \[ r(C_{2k+1}, W_{2j}) \leq \frac{3j + \beta}{1 - \alpha}. \] 
\end{theorem}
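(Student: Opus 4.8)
The plan is to argue by contradiction. Suppose $N := \lceil \frac{3j+\beta}{1-\alpha} \rceil$ and that $K_N$ carries a red-blue coloring with no red $C_{2k+1}$ and no blue $W_{2j}$; I will derive a contradiction. The two numerical facts that drive everything are the (near-)identity $\alpha N + \beta \le N - 3j$ and the inequality $N > 4j$, both immediate from $\alpha \ge \tfrac14$ and $\beta > 0$; these make the admissibility threshold $\alpha N + \beta$ coincide with the red/blue degree split. The guiding idea is a dichotomy: either the red graph $G^R$ is dense and well-structured enough that the admissible pair, applied to $G^R$, produces the red $C_{2k+1}$; or some vertex $v$ has a large blue neighborhood $B := N^B(v)$ in which I can find a blue $C_{2j}$, so that $v$ together with this cycle is a blue $W_{2j}$. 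In both horns the cycle is produced by combining Dirac's circumference bound (\Cref{lemma:dirac}) with the admissibility of $(\alpha,\beta)$, or with \Cref{corollary:bondy2}/\Cref{lemma:brandt} in the dense regime.

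First I would treat the case $\delta(G^R) \ge \alpha N + \beta$. Since $N > 4j \ge 4(k+1)$ and $\alpha \ge \tfrac14$, this forces $\delta(G^R) > k+1$, so \Cref{lemma:dirac} gives $c(G^R) \ge \min\{2\delta(G^R), N\} > 2k+1$; as $2k+1 \ge 6$, the admissibility of $(\alpha,\beta)$ yields a red $C_{2k+1}$, provided $G^R$ is $2$-connected and non-bipartite. The degenerate possibilities have to be cleared by hand. If $G^R$ is bipartite, each side of the bipartition spans a blue clique, and since $N > 4j$ one side has more than $2j$ vertices, producing a blue $W_{2j}$. If $G^R$ is disconnected or has a cut vertex, every block/component is large (order at least $\delta(G^R)+1$) and either inherits enough minimum degree to contain a red $C_{2k+1}$ via \Cref{lemma:bondy}, or the all-blue bipartite graph between two red components, together with \Cref{lemma:ramseyforcicles}, supplies the blue cycle and hence the wheel.

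The main case is $\delta(G^R) < \alpha N + \beta$: then some vertex $v$ has blue degree $d^B(v) \ge (1-\alpha)N - \beta \ge 3j$, so $b := |B| \ge 3j$, and it suffices to find a blue $C_{2j}$ in $H := G^B[B]$. The decisive sub-lemma controls the red degrees inside $B$: if a vertex $u \in B$ has red neighborhood $R_u \subseteq B$, then $G^R[R_u]$ cannot contain a path on $2k$ vertices, since such a path closes through $u$ (which is red-adjacent to both its endpoints) into a red $C_{2k+1}$. By the classical Erd\H{o}s--Gallai bound this caps the red edges inside $R_u$ at $(k-1)|R_u|$, so $R_u$ is blue-dense; after deleting the $O(k)$ vertices of large red degree, \Cref{corollary:bondy2} makes the remaining blue graph pancyclic, and if $|R_u|$ is large it contains a blue $C_{2j}$, which again completes a blue $W_{2j}$ with hub $v$ (as $R_u \subseteq N^B(v)$). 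When instead every red neighborhood inside $B$ is small, the complementary bound $\delta(H) = b-1-\max_u|R_u|$ is close to $b$, so $H$ is nearly complete: it is $2$-connected, non-bipartite, satisfies $\delta(H) \ge \alpha b + \beta$, and has $c(H) \ge 2j$, whence admissibility delivers the blue $C_{2j}$.

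I expect the genuine obstacle to be making this last dichotomy quantitatively uniform across all admissible pairs with $\alpha \ge \tfrac14$ and across the whole range $k < j$. The Dirac circumference estimate is weakest when $\alpha$ is close to $\tfrac14$, where $c(H) \ge \min\{2\delta(H), b\}$ only barely reaches $2j$; and the ``small red neighborhood'' and ``large red neighborhood'' thresholds must be matched so that one of them always fires, which becomes tight when $j$ is close to $k$ (so the additive $O(k)$ losses from deleting exceptional vertices are not negligible) and when $\beta$ is large (so the additive constant must be tracked exactly, as in the $(\tfrac14,250)$ instance). Restoring $2$-connectedness and non-bipartiteness of $H$ before invoking admissibility, by discarding a bounded number of vertices without destroying the degree bounds, is the remaining piece of bookkeeping that ties the argument together.
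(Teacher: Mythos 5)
Your opening case (when $\delta(G^R) \ge \alpha N + \beta$) is essentially the paper's \Cref{lemma:gr2conexo}, \Cref{lemma:grnobipartito} and \Cref{lemma:highbluedegree}: Dirac plus admissibility applied to $G^R$ forces $2\delta^R(G) < 2k+1$, a contradiction, so there is a vertex $w$ of blue degree at least $3j$. But your main case breaks down, and at exactly the point you flagged as ``bookkeeping.'' Your second horn applies admissibility to $H^B$, where $H = N^B(w)$, and this requires $\delta^B(H) \ge \alpha|H| + \beta$. All you are guaranteed is $|H| \ge 3j$, so the threshold $\alpha|H|+\beta$ can exceed $|H|-1$ outright: for the pair $(\tfrac14, 250)$ from \Cref{lemma:brandt2}, you would need $3j - 1 \ge \tfrac{3j}{4} + 250$, i.e.\ $j \gtrsim 112$, even if $H$ is \emph{entirely} blue. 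Meanwhile your first horn (Erd\H{o}s--Gallai inside a red neighborhood $R_u$, then deleting high-red-degree vertices and invoking \Cref{corollary:bondy2}) needs $|R_u| \ge 2j + O(k)$ just to have room for a blue $C_{2j}$ after deletions. Even if you repair the second horn by replacing admissibility with \Cref{corollary:bondy2} directly (needing $\max_u |R_u| < |H|/2 - 1 \approx \tfrac{3}{2}j$), the two thresholds do not meet: when $\max_u |R_u|$ lies in the range roughly $[\tfrac{3}{2}j,\, 2j + O(k)]$, neither horn fires. This is a genuine missing idea, not a quantitative tightening — no choice of cutoff makes your dichotomy exhaustive, and admissibility simply cannot be re-used inside $H$ because $H$ is too small relative to $\beta$.

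The paper closes exactly this middle range by a different mechanism that uses no degree dichotomy inside $H$. Since $|H| \ge 3j > r(C_{2j}, C_{2j}) - 1$ (\Cref{lemma:ramseyforcicles}), $H$ contains a monochromatic $C_{2j}$, which must be red (a blue one plus $w$ is a blue $W_{2j}$), so $c(H^R) \ge 2j$ (\Cref{lemma:hrcircunferencia}). Both $H^R$ and $H^B$ are shown non-bipartite, and then Brandt's \emph{edge-count} theorem (\Cref{lemma:brandt}), which needs no minimum-degree hypothesis at all, forces one of them to be weakly pancyclic with a triangle; it cannot be $H^R$ (that would yield a red $C_{2k+1}$), so it is $H^B$. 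The red $C_{2j}$ also gives $\delta^B(H) \ge j$ by a pigeonhole on distances along the cycle (\Cref{lemma:gradominimohb}); then \Cref{lemma:dirac} shows $H^B$ cannot be $2$-connected, and the final contradiction comes from analyzing the cut: the two components of $H^B - S$ are complete blue, the edges between them are red, and either $H^R$ is bipartite (contradicting \Cref{lemma:hrnobiparito}) or the cut vertex completes a red $C_{2k+1}$. In short: you correctly identified that the blue neighborhood is where the work lies, but the tool there is the Ramsey number for even cycles together with Brandt's edge-density criterion and a connectivity analysis — not a second application of the admissible pair, which is quantitatively unavailable at scale $3j$.
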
 Then, using $j = k+1$ with the admissible pairs implied from \Cref{lemma:brandt1} and \Cref{lemma:brandt2} respectively we obtain easily the bounds of \Cref{theorem:secondaryresult}.

Now let us prove \Cref{theorem:generalsecondaryresult}. Let $2 < k < j$ be integers and $(\alpha, \beta)$ an admissible pair with $\alpha \ge 1/4$. Let $G$ be a graph with $|G| \ge (3j+\beta)/(1 - \alpha)$ and $c: E(G) \rightarrow \{R,B\}$ a red-blue coloring of its edges. Suppose, for the sake of contradiction, that $C_{2k+1} \nsubseteq G^R$ or $W_{2j} \nsubseteq G^B$.

The bound $\alpha \ge 1/4$ implies that $|G| \ge 4j + \frac{4}{3} \beta > 4j$, and thus, $|G| \ge 4j+1$. We start with a series of lemmas.

\begin{lemma} \label{lemma:gr2conexo}
If $\delta^R(G) \ge j$, then $G^R$ is 2-connected.
\end{lemma}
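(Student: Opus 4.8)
The plan is to argue by contradiction: assume $G^R$ is not $2$-connected. Since $|G| \ge 4j+1 \ge 3$, this forces a vertex set $S$ with $|S| \le 1$ whose removal disconnects $G^R$; write $V(G) \setminus S = A \sqcup B$ with $A, B$ nonempty and no red edge joining $A$ to $B$. As $G$ is complete, every edge of $E(A,B)$ is blue, and my goal is to manufacture either a red $C_{2k+1}$ or a blue $W_{2j}$, contradicting the standing hypothesis of \Cref{theorem:generalsecondaryresult}. For the sizes, note that each vertex of $A$ has at least $j$ red neighbors, all lying in $A \cup S$, hence at least $j-1$ of them in $A$; thus $|A| \ge j$, and symmetrically $|B| \ge j$. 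Since $|A| + |B| = |G| - |S| \ge 4j$, I may assume without loss of generality that $|A| \ge 2j$.

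The heart of the argument is a dichotomy applied to $A$. If some vertex $h \in A$ has at least $j$ blue neighbors $a_1, \dots, a_j$ inside $A$, then choosing any $j$ vertices $b_1, \dots, b_j \in B$, the cycle $a_1 b_1 a_2 b_2 \cdots a_j b_j a_1$ is blue (all its edges lie in $E(A,B)$), and $h$ is blue-adjacent to each of its vertices (to each $a_i$ by choice, to each $b_i$ because $E(A,B)$ is blue); this is a blue $W_{2j}$, a contradiction. Otherwise every vertex of $A$ has at most $j-1$ blue neighbors in $A$, so $G^R[A]$ has minimum degree at least $|A| - j \ge |A|/2$. By \Cref{lemma:bondy}, $G^R[A]$ is pancyclic or isomorphic to $K_{m,m}$. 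In the pancyclic case, since $3 \le 2k+1 \le 2k+2 \le 2j \le |A|$, the graph $G^R[A]$ contains a red $C_{2k+1}$, again a contradiction.

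It therefore remains to treat the case $G^R[A] \cong K_{m,m}$; comparing the minimum degree $m$ with the bound $|A| - j$ forces $m = j$ and $|A| = 2j$, so $A$ splits into two blue cliques $A_1, A_2$ of size $j$ whose crossing edges are all red, and likewise $|B| = |G| - 2j - |S| \ge 2j$. Running the same dichotomy on $B$ produces a contradiction unless $G^R[B] \cong K_{j,j}$ as well, with blue cliques $B_1, B_2$, and $S = \{s\}$ a genuine cut vertex. I finish by using $s$ as the hub of a blue wheel. The key observation is that inside a red $K_{j,j}$ one can join vertices from opposite parts by red paths of every odd length up to $2j-1$; hence if $s$ had red neighbors in both parts of the copy of $K_{j,j}$ on $A$ (or on $B$), I could close a red path of length $2k-1$ into a red $C_{2k+1}$, using $2k-1 \le 2j-1$. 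Thus $s$ meets only one part of each side, so it is blue-complete to, say, $A_2$ and $B_2$. Since $A_2$ and $B_2$ are blue cliques of size $j$ joined by the all-blue set $E(A_2,B_2)$, the blue graph on $A_2 \cup B_2$ contains a blue $C_{2j}$, and $s$ is blue-adjacent to every vertex of it, yielding a final blue $W_{2j}$.

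The main obstacle is precisely this exceptional configuration. When the red graph on the large side is the bipartite graph $K_{j,j}$, no red odd cycle can be extracted from it, while the purely bipartite blue structure between $A$ and $B$ cannot by itself host the hub of a wheel, since a hub must be blue-adjacent to rim vertices on both sides. Both difficulties are resolved only by bringing in the separator vertex $s$ and exploiting the forbidden red $C_{2k+1}$ to pin down $s$'s red neighborhood; the remaining steps are routine size and degree bookkeeping.
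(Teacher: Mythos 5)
Your proof is correct, and it shares the paper's overall skeleton: assume a cutset $S$ with $|S| \le 1$, note every vertex retains at least $j-1$ red neighbors on its own side, run the dichotomy ``some vertex has $j$ blue neighbors on its own side, hubbing a blue $W_{2j}$ over an alternating rim, versus $\delta(G^R[A]) \ge |A| - j \ge |A|/2$'', and invoke \Cref{lemma:bondy} so that only the exceptional case $G^R[A] \cong K_{j,j}$, $|A| = 2j$ survives. The endgame, however, is genuinely different. The paper never touches the separator vertex: having forced the red $K_{j,j}$ structure on the larger component $C_1$, it shows directly that $\Delta^B(C_2) \le 1$ --- a vertex of $C_2$ with two blue neighbors in $C_2$ would hub a blue $W_{2j}$ whose rim is threaded through the two blue $j$-cliques of $C_1$ --- whence $\delta^R(C_2) \ge |C_2| - 2 > |C_2|/2$ and \Cref{corollary:bondy2} produces a red $C_{2k+1}$. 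You instead symmetrize, forcing $G^R[B] \cong K_{j,j}$ as well, and then use the odd-path argument inside each red $K_{j,j}$ to pin the red neighborhood of the cut vertex $s$ to one part per side, making $s$ the hub of a blue $W_{2j}$ on the two opposite blue $j$-cliques; this is sound, since paths between opposite parts of $K_{j,j}$ realize every odd length up to $2j-1 \ge 2k-1$. Two remarks. First, your formulation buys a small simplification: by splitting $V(G) \setminus S$ into two arbitrary red-disconnected blocks $A \sqcup B$, the case of three or more red components is absorbed automatically (a vertex of one component has at least $j$ blue neighbors in the others, so the first branch of your dichotomy fires), where the paper dispatches the multi-component case separately. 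Second, you assert $S = \{s\}$ without justification; your endgame genuinely needs $S \neq \emptyset$ (unlike the paper's, which works even when $S = \emptyset$), so make explicit the one-line count $|G| = |A| + |B| + |S| = 4j + |S| \ge 4j+1$, forcing $|S| = 1$.
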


\begin{proof}
Suppose that $\delta^R(G) \ge j$ but $G^R$ is not 2-connected. Then there exists a $S \subseteq V(G)$ with $|S| \leq 1$ such that $G^R - S$ is disconnected. Let $C_1, \dotsc, C_r$ be the connected components of $G^R - S$, such that $|C_1| \ge |C_2| \ge \dotsb \ge |C_r|$. We have that $\delta^R(G-S) \ge \delta^R(G) - |S| \ge j - 1$ and so, every connected component in $\{C_i\}_{i \in [r]}$ has size at least $j$.

Thus, as all edges between distinct $C_i$, $C_j$ are all blue and $G^B$ does not contain $W_{2j}$, we see that $G^R-S$ has exactly two connected components; with $|C_1| \ge 2j$. For the same reason, $\delta^R(C_1) \ge |C_1| - j \ge |C_1|/2$, and by \Cref{lemma:bondy} $C_1^R$ is pancyclic or complete bipartite with parts of equal size.

If $C_1^R$ is pancyclic, then, as $|C_1| \ge 2j \ge 2k+1$, we find a red copy of $C_{2k+1}$, which is not possible. So $C_1^R$ is a complete bipartite graph. In particular, by \Cref{corollary:bondy2}, $\delta^R(C_1) = |C_1|/2$ and thus, $|C_1|=|C_2| = 2j$.

If there is a vertex $v$ in $C_2$ with two blue neighbors in $C_2$, then we find a blue $W_{2j}$ with $v$ as hub. So $\Delta^B(C_2) < 2$ and thus, $\delta^R(C_2) \ge |C_2| - 2 > j \ge |C_2|/2$. By \Cref{corollary:bondy2}, $C_2^R$ is pancyclic and therefore contains a red $C_{2k+1}$,a contradiction. 
\end{proof}

\begin{lemma} \label{lemma:grnobipartito}
$G^R$ is not bipartite.
\end{lemma}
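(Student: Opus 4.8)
The plan is to proceed by contradiction. Suppose $G^R$ is bipartite, and fix a bipartition $\{A,B\}$ of $V(G)$ with all red edges running between $A$ and $B$. Since $G$ is a complete graph and neither $A$ nor $B$ contains a red edge, every edge with both endpoints in $A$, and every edge with both endpoints in $B$, must be blue. Hence $A$ and $B$ each span a complete blue subgraph of $G$.

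I would then invoke the lower bound on the order of $G$ already derived, namely $|G| \ge 4j+1$. Since $|A|+|B| \ge 4j+1$, at least one class is large: $\max\{|A|,|B|\} \ge \lceil (4j+1)/2 \rceil = 2j+1$. Taking the larger class, say $A$, we obtain a blue clique on at least $2j+1$ vertices.

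To finish, I would exhibit a blue wheel inside this clique. A complete graph on $2j+1$ vertices contains $W_{2j}$ as a subgraph: designate one vertex as the hub, select any cycle $C_{2j}$ through the remaining $2j$ vertices (which exists because those vertices induce a $K_{2j} \supseteq C_{2j}$), and adjoin the hub--rim edges. All of these edges lie inside $A$ and are therefore blue, so we produce a blue copy of $W_{2j}$ in $G^B$, contradicting our standing assumption that no blue $W_{2j}$ occurs. This contradiction shows that $G^R$ is not bipartite.

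I do not expect a genuine obstacle here; the argument is essentially a pigeonhole step followed by an embedding observation. The only facts needing verification are the elementary counting identity $\lceil (4j+1)/2\rceil = 2j+1$ and the containment $W_{2j}\subseteq K_{2j+1}$, both of which are immediate. The one point worth flagging is that the proof relies only on the ``no blue $W_{2j}$'' half of the contradiction hypothesis together with the completeness of $G$, so one should confirm that this half is indeed in force at this stage of the overall argument.
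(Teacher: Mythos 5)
Your proof is correct and follows exactly the paper's (one-line) argument: since $|G| \ge 4j+1$, a bipartition of $G^R$ forces one class of size at least $2j+1$, which is an all-blue clique in the complete graph $G$ and hence contains a blue $W_{2j}$, contradicting the standing assumption that $W_{2j} \nsubseteq G^B$. The half of the hypothesis you flag is indeed in force throughout this section, so there is nothing to add beyond the details you have already filled in.
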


\begin{proof}
Otherwise, as $|G| \ge 4j+1$, one of the parts of the bipartition has at least $2j+1$ vertices, therefore containing a blue copy of $W_{2j}$ as a subgraph, which is impossible.
\end{proof}

\begin{lemma} \label{lemma:highbluedegree}
$\Delta^B(G) \ge 3j$.
\end{lemma}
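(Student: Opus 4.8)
The plan is to argue by contradiction, assuming $\Delta^B(G) < 3j$ and deriving a red $(2k+1)$-cycle from the admissibility of $(\alpha,\beta)$. Since $G$ is the complete graph on $N := |G|$ vertices, the assumption $\Delta^B(G) \le 3j-1$ forces every vertex to have red degree at least $(N-1) - (3j-1) = N - 3j$, so $\delta^R(G) \ge N - 3j$. The key observation is that the hypothesis $N \ge (3j+\beta)/(1-\alpha)$ rearranges, upon multiplying by $1-\alpha > 0$, to $N - 3j \ge \alpha N + \beta$; hence $\delta^R(G) \ge \alpha N + \beta$, which is precisely the minimum-degree threshold in the definition of an admissible pair, applied to the graph $G^R$ on $N$ vertices.

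Next I would check the remaining hypotheses needed to invoke admissibility. Since $N \ge 4j+1$ and $j \ge k+1$ (as $j > k$ are integers), we have $\delta^R(G) \ge N - 3j \ge j+1 > j$, so \Cref{lemma:gr2conexo} gives that $G^R$ is $2$-connected, and \Cref{lemma:grnobipartito} gives that $G^R$ is non-bipartite. It then remains to bound the circumference of $G^R$ from below so that $C_{2k+1}$ lies in the admissible range. Applying Dirac's theorem (\Cref{lemma:dirac}) to the $2$-connected graph $G^R$ yields $c(G^R) \ge \min\{2\delta^R(G), N\}$. Here $2\delta^R(G) \ge 2(N-3j) \ge 2(j+1) = 2j+2 \ge 2k+4$, while $N \ge 4j+1 \ge 4k+5$, so in either branch of the minimum we obtain $c(G^R) \ge 2k+1$.

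With these facts assembled the conclusion is immediate: $G^R$ is a $2$-connected non-bipartite graph on $N$ vertices with $\delta(G^R) \ge \alpha N + \beta$, so admissibility of $(\alpha,\beta)$ guarantees that $G^R$ contains $C_t$ for every $t$ with $6 \le t \le c(G^R)$. As $k \ge 3$ gives $6 \le 2k+1 \le c(G^R)$, we conclude $C_{2k+1} \subseteq G^R$, contradicting the standing assumption that $G$ has no red $(2k+1)$-cycle; this contradiction proves $\Delta^B(G) \ge 3j$. The argument is essentially a bookkeeping assembly of the previously established lemmas together with Dirac's circumference bound, so no single step is a genuine obstacle; the only place demanding care is the chain of numerical inequalities, in particular verifying that both branches of $\min\{2\delta^R(G), N\}$ comfortably exceed $2k+1$, which relies on combining $j > k \ge 3$ with the earlier bound $N \ge 4j+1$.
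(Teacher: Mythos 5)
Your proposal is correct and follows essentially the same route as the paper: both pass from the assumption $\Delta^B(G)\le 3j-1$ to $\delta^R(G)\ge \alpha N+\beta$, invoke \Cref{lemma:gr2conexo} and \Cref{lemma:grnobipartito} to apply admissibility to $G^R$, and combine this with Dirac's bound (\Cref{lemma:dirac}) to produce a red $C_{2k+1}$. The only cosmetic differences are that the paper phrases the degree hypothesis as the stronger claim $\Delta^B(G) > (1-\alpha)|G|-(\beta+1)$ and runs the Dirac step in contrapositive form (deducing $c(G^R)<2k+1$ and contradicting $\delta^R(G)\ge j$), which is logically the same argument reassembled.
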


\begin{proof}
We prove the stronger claim that $\Delta^B(G) > (1-\alpha)|G|-(\beta+1)$. Suppose otherwise, that is, $\delta^R(G) \ge \alpha |G| + \beta$. This implies that $\delta^R(G) \ge |G|/4 \ge j$. Then, by \Cref{lemma:gr2conexo}, $G^R$ is 2-connected. By \Cref{lemma:grnobipartito}, $G^R$ is not bipartite. As $(\alpha, \beta)$ is an admissible pair, $G^R$ contains cycles of every length between $6$ and $c(G^R)$. As the $(2k+1)$-cycle is not a subgraph of $G^R$, this means that $c(G^R) < 2k+1$. By \Cref{lemma:dirac}, $\min \{ 2 \delta^R(G), |G| \} \le c(G^R)$ and this implies that $2 \delta^R(G) < 2k+1$. From this, it follows that $\delta^R(G) < k < j \leq \delta^R(G)$, a contradiction.
\end{proof}

From now on, let $w$ be a vertex of maximum blue degree and let us define $H := N^B(w)$. From \Cref{lemma:highbluedegree}, we get that $|H| \ge 3j$.

\begin{lemma} \label{lemma:hrcircunferencia}
$c(H^R) > 2k+1$.
\end{lemma}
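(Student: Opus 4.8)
The plan is to argue by contradiction: assume $c(H^R) \le 2k+1$. Since $C_{2k+1}$ is not a red subgraph of $G$ and $H \subseteq V(G)$, the graph $H^R$ contains no red $(2k+1)$-cycle, so in fact $c(H^R) \le 2k$. The guiding observation is that $w$ is blue-adjacent to every vertex of $H$ while $w \notin H$, so any blue copy of $C_{2j}$ contained in $H$ would, together with the hub $w$, produce a blue $W_{2j}$ in $G$; as this is forbidden, the blue graph $H^B$ contains no $C_{2j}$. Thus the whole task reduces to showing that a red--blue coloring of the edges on $H$, with $|H| \ge 3j$ and no blue $C_{2j}$, must satisfy $c(H^R) \ge 2k+2$, contradicting $c(H^R)\le 2k$.

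The main engine will be Dirac's circumference bound (\Cref{lemma:dirac}). First I would record that $|H| \ge 3j \ge 3(k+1) > 2k+1$, so the target length $2k+2$ does not exceed $|H|$. I then claim two structural facts about $H^R$: that it is $2$-connected, and that $\delta^R(H) \ge |H| - 2j$. Granting these, \Cref{lemma:dirac} yields
\[ c(H^R) \ge \min\{\, 2\delta^R(H),\, |H| \,\} \ge \min\{\, 2(|H|-2j),\, |H|\,\}, \]
and since $|H| \ge 3j$ gives $2(|H|-2j) \ge 2j \ge 2k+2$ (using $j \ge k+1$) and $|H| \ge 3j \ge 2k+2$, both terms are at least $2k+2$. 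Hence $c(H^R) \ge 2k+2$, the desired contradiction.

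It remains to justify the two structural facts, and this is where the real work lies. The degree bound $\delta^R(H) \ge |H| - 2j$ is equivalent to the statement that no vertex of $H$ has $2j$ or more blue neighbors inside $H$. The hard part will be ruling this out: if some $v \in H$ had a blue neighborhood $B \subseteq H$ with $|B| \ge 2j$, I would try to locate a blue $C_{2j}$ inside $B$ (equivalently, using $v$ or $w$ as a hub), which is again forbidden. To force such a cycle I would apply \Cref{lemma:bondy} to the blue graph induced on $B$ (or on $B \cup \{v\}$): a blue minimum degree of at least half the size of this set makes it pancyclic---yielding a blue $C_{2j}$ since the set has at least $2j$ vertices---or isomorphic to a balanced complete bipartite graph $K_{m,m}$, which still contains $C_{2j}$ whenever $m \ge j$. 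The delicate point is that Bondy's hypothesis only controls the minimum blue degree of the induced subgraph, so the argument must either peel off the few vertices of large red degree in $B$ or proceed by induction on $|H|$ after deleting a blue-heavy vertex; controlling this interaction between the global $C_{2j}$-freeness of $H^B$ and the local degrees is the principal obstacle.

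The $2$-connectivity of $H^R$ I would establish by a cut-argument mirroring \Cref{lemma:gr2conexo}: if $H^R$ were disconnected or had a cut vertex $s$, then, using the degree information just obtained, $H^R - s$ would split into parts each large enough that all edges between distinct parts are blue; two such parts of size at least $j$ would then let me build a blue $C_{2j}$ (for instance via \Cref{lemma:pancyclicalmostbipartite} or \Cref{corollary:bondy2} applied to the blue graph), once more contradicting the absence of a blue $W_{2j}$ with hub $w$. With $2$-connectivity and the minimum-degree bound in hand, the Dirac step closes the argument.
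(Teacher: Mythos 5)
Your opening reduction is exactly right: $w$ is blue-adjacent to all of $H$, so $H$ contains no blue $C_{2j}$, and $|H|\ge 3j$. But at that point the lemma is already finished by Ramsey numbers for cycles, which is precisely what the paper does: by \Cref{lemma:ramseyforcicles}, $r(C_{2j},C_{2j})=3j-1<|H|$, so $H$ contains a monochromatic $C_{2j}$; it cannot be blue, hence $H^R$ contains a $C_{2j}$ and $c(H^R)\ge 2j\ge 2k+2>2k+1$. Instead of this one-line finish, you launch a Dirac-based program resting on two structural claims --- $\delta^R(H)\ge |H|-2j$ (equivalently $\Delta^B(H)\le 2j-1$) and $2$-connectivity of $H^R$ --- and neither is proved; you yourself flag the degree bound as ``the principal obstacle,'' and the fallbacks you offer (peeling off vertices, induction on $|H|$) are not arguments.

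The gap in the degree bound is genuine, not just a missing detail. Suppose $v\in H$ has a set $B$ of at least $2j$ blue neighbors in $H$, and consider the coloring on $B$ consisting of two disjoint blue cliques of sizes $j+1$ and $j-1$ with all edges between them red. Then the blue graph on $B$ has no $C_{2j}$ (its longest cycle has length $j+1$), no blue path on $2j-1$ vertices that $v$ could close into a rim, and minimum blue degree $j-2<|B|/2$, so \Cref{lemma:bondy} and \Cref{corollary:bondy2} are inapplicable to it; meanwhile the red graph on $B$ is bipartite, so it contains no $C_{2k+1}$ either. Thus there is no local contradiction to extract from a high-blue-degree vertex, and ruling out this configuration would require global information your sketch never brings to bear. (Indeed, note that the paper later proves $\delta^B(H)\ge j$ in \Cref{lemma:gradominimohb} --- the blue degrees in $H$ are forced to be \emph{large}, and the paper never needs, and never establishes, your upper bound on them.) The $2$-connectivity claim is likewise only gestured at. As written, the proposal does not constitute a proof; the correct move after your first paragraph is to invoke \Cref{lemma:ramseyforcicles} directly.
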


\begin{proof}
As $|H| \ge 3j > r(C_{2j})$, $H$ contains a monochromatic copy of $C_{2j}$. It cannot be blue, as that would create, together with the vertex $w$, a blue copy of $W_{2j}$. Thus $H$ contains a red copy of $C_{2j}$ and this implies that $c(H^R) \ge 2j > 2k+1$.
\end{proof}

\begin{lemma} \label{lemma:hbnobiparito}
$H^B$ is not bipartite.
\end{lemma}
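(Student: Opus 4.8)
The plan is to argue by contradiction: I assume $H^B$ is bipartite, with parts $A$ and $B$ labeled so that $|A| \ge |B|$. Since every blue edge of $H$ runs between $A$ and $B$, and $G$ is complete, each of $A$ and $B$ induces a red clique; in particular $(A,A)$ and $(B,B)$ are disjoint red hedgehogs. I will keep in mind two standing facts: $|A| + |B| = |H| \ge 3j$ by \Cref{lemma:highbluedegree}, and $w$ is joined by a blue edge to every vertex of $H = N^B(w)$, so any blue $C_{2j}$ found inside $H$ produces a blue $W_{2j}$ with hub $w$, a contradiction. The entire task therefore reduces to exhibiting, in every case, either a red $C_{2k+1}$ or a blue $C_{2j}$ inside $H$.

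First I would dispose of the case where $B$ is small. If $|B| \le j$, then $|A| \ge |H| - |B| \ge 2j$, and since $k < j$ gives $2j \ge 2k+2 > 2k+1$, the red clique $A$ already contains a red $C_{2k+1}$. So from here on I may assume $|B| \ge j+1$, and hence $|A| \ge |B| \ge j+1$.

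Next I would analyze the red edges between $A$ and $B$. If two of them are vertex-disjoint, I apply \Cref{corollary:nodisjointrededges} to the disjoint red hedgehogs $(A,A)$ and $(B,B)$, whose cliques both have size at least $j+1 \ge 3$; this yields red cycles of every length between $6$ and $|A|+|B| = |H| \ge 3j$. Because $k \ge 3$ and $k < j$ give $6 \le 2k+1 \le 3j$, this range contains $2k+1$, producing a red $C_{2k+1}$, a contradiction. Otherwise the red edges between $A$ and $B$ pairwise meet, so they form a (possibly empty) star with some center $c$. Deleting $c$ leaves only blue edges between $A \setminus \{c\}$ and $B \setminus \{c\}$, so these form a complete blue bipartite graph; the side not containing $c$ is untouched while the other loses a single vertex, so both sides retain size at least $j$. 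A complete bipartite graph with both sides of size at least $j$ contains a $C_{2j}$, here blue, which with hub $w$ gives a blue $W_{2j}$, the final contradiction.

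The only delicate point is the vertex bookkeeping: after removing the center of the star, both sides of the blue bipartite graph must still have at least $j$ vertices, and this is exactly what the preliminary reduction to $|B| \ge j+1$ guarantees. Beyond this I expect no genuine obstacle, since every cycle in the argument is supplied either by \Cref{corollary:nodisjointrededges} or by the completeness of the blue bipartite graph produced after deleting the center.
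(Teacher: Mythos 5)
Your proof is correct and follows essentially the same route as the paper: assume a bipartition, observe both parts are red cliques, rule out two disjoint red crossing edges (which would yield a red $C_{2k+1}$), reduce the remaining red crossing edges to a star, and delete its center to expose a blue $C_{2j}$ that with hub $w$ forms a blue $W_{2j}$. Your only deviations are bookkeeping: you case on $|B| \le j$ directly instead of the paper's upper bound of $2k$ on each part (which gives both parts size at least $j+2$), and you invoke \Cref{corollary:nodisjointrededges} explicitly where the paper argues the cycle range inline.
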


\begin{proof}
Suppose $H^B$ is bipartite with $H_1, H_2$ the parts of the bipartition. As $G$ does not contain red copies of $C_{2k+1}$, each part of the bipartition has size at most $2k \leq 2j - 2$. As $|H| \ge 3j$, we have that $|H_1|, |H_2| \ge j+2$.


Note that $E(H_1, H_2)$ cannot have two disjoint red edges, as that would form red cycles of every size between $4$ and $|H|$, including $2k+1$. So the red edges in $E(H_1, H_2)$ form a (possibly empty) star. Let $x \in H$ be the center of that star. Then $E(H_1 - x, H_2 - x)$ contains a blue copy of $C_{2j}$ as a subgraph. This copy, joined with the vertex $w$ forms a blue copy of $W_{2j}$, a contradiction.
\end{proof}

\begin{lemma} \label{lemma:hrnobiparito}
$H^R$ is not bipartite.
\end{lemma}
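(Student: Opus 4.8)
The plan is to argue by contradiction, following the template of the proof of \Cref{lemma:hbnobiparito} with the two colors interchanged, and then to confront the single place where that symmetry breaks down. Suppose $H^R$ is bipartite with parts $H_1, H_2$, so that each $H_i$ induces a blue clique. Since $w$ is blue-adjacent to every vertex of $H$, a blue clique of size $2j$ inside $H$ together with $w$ as a hub would produce a blue $W_{2j}$; hence $|H_i| \le 2j-1$, and since $|H| \ge 3j$ this forces $\min\{|H_1|,|H_2|\} \ge 3j-(2j-1) = j+1$. Next I would rule out two vertex-disjoint blue edges in $E(H_1,H_2)$: viewing $(H_1,H_1)$ and $(H_2,H_2)$ as disjoint blue hedgehogs, the color-symmetric version of \Cref{corollary:nodisjointrededges} would then yield blue cycles of every length from $6$ to $|H|\ge 2j$, and a blue $C_{2j}$ plus the hub $w$ is again a blue $W_{2j}$. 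Thus the blue edges of $E(H_1,H_2)$ form a (possibly empty) star; deleting its center $y$ leaves a \emph{complete red bipartite} graph between $H_1\setminus\{y\}$ and $H_2$, both sides of size at least $j$.

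Here the argument departs from that of \Cref{lemma:hbnobiparito}. There, deleting the star center produced a complete blue bipartite graph, which immediately contains the even cycle $C_{2j}$ we need. Now the leftover complete bipartite graph is red, and being bipartite it contains only \emph{even} cycles, so it cannot by itself supply the forbidden odd cycle $C_{2k+1}$. Manufacturing an odd red cycle is the crux of the proof, and this is the step I expect to be the main obstacle.

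To produce an odd red cycle I would bring in a vertex from outside $H$. Since $|H| = |H_1|+|H_2| \le 4j-2$ while $|G|\ge 4j+1$, there is a vertex $z\notin H\cup\{w\}$. If some such $z$ has a red neighbour $a\in H_1\setminus\{y\}$ and a red neighbour $b\in H_2$, then, using the complete red bipartite graph to route an alternating red path of length $2k-1$ from $a$ to $b$ (this needs only $k$ vertices on each side, which is available because $j>k$) and closing it through $z$ via the red edges $za$ and $bz$, I obtain a red $C_{2k+1}$, the desired contradiction.

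The delicate case is the complementary one, in which every vertex outside $H$ is blue-complete to one of $H_1\setminus\{y\}$ or $H_2$. In that regime I would enlarge the corresponding blue clique by these outside vertices and, balancing the two sides so that $2j$ cycle vertices are available together with one spare vertex to act as a hub (using $w$, or an unused clique vertex, whichever is blue-adjacent to the entire cycle), extract a blue $C_{2j}$ with a valid hub and contradict the absence of a blue $W_{2j}$. Checking that a suitable hub is always present — the step that repeatedly pushes the clique sizes toward the boundary value $2j-1$ — is the point requiring the most care, and is where I would concentrate the bookkeeping.
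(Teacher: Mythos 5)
There is a genuine gap, and it sits exactly where you flagged the ``delicate'' case. The first half of your argument (parts of size at most $2j-1$, hence at least $j+1$ each; no two disjoint blue crossing edges, via the blue-hedgehog version of \Cref{corollary:nodisjointrededges}; a blue star whose deletion leaves a complete red bipartite graph) matches the paper, and your first sub-case --- a \emph{single} outside vertex $z$ with red neighbours $a$ and $b$ on both sides --- is the paper's sub-case $z_1 = z_2$. What you are missing is the observation that every vertex outside $H \cup \{w\}$ lies in $Z := N^R(w)$, so the two red attachments may come from \emph{different} outside vertices and still close a red odd cycle: if $z_1, z_2 \in Z$ are distinct with red edges $x_1 z_1$ and $x_2 z_2$ ($x_1$ in the $H_1$ side, $x_2$ in the $H_2$ side), then $x_1 z_1 w z_2 x_2$ is a red path of length $4$, which combines with a red $(x_1,x_2)$-path of length $2k-3$ in the complete red bipartite graph to give a red $C_{2k+1}$. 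This upgrades your per-vertex dichotomy to a global one: some side $H_i$ (minus the star center $x$) sends \emph{only} blue edges to all of $Z$, whereupon $\bigl( (H_i - x) \cup \{w\} \cup Z,\ H_i - x \bigr)$ is a blue hedgehog on at least $|G| - |H_{3-i}| - 1 \ge 2j+1$ vertices, and a hub chosen inside $H_i - x$ yields the forbidden blue $W_{2j}$. That is how the paper finishes.

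Your complementary case, as you framed it, cannot be closed by blue wheels. Suppose $Z$ splits into $Z_1$ (each vertex blue-complete to the $H_1$ side) and $Z_2$ (each blue-complete to the $H_2$ side), with both nonempty and with red edges from $Z_1$ into $H_2$ and from $Z_2$ into $H_1$. Then ``enlarging the blue cliques'' fails on two counts: vertices of $Z_i$ need not be pairwise blue, so the enlarged sets are not cliques; and no hub is available, since $w$ is a \emph{red} neighbour of every vertex of $Z$ (by definition of $Z$) and so cannot hub any blue $C_{2j}$ using an outside vertex, while the blue cliques themselves have at most $2j-1$ vertices and other outside vertices are blocked by red edges inside $Z$. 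One can write down a coloring of exactly this shape admitting no blue $W_{2j}$ at all; in that regime the contradiction is necessarily a \emph{red} $C_{2k+1}$, obtained through $w$ as above. So the bookkeeping you deferred is not merely delicate --- the plan is aimed at the wrong color, and the route through $w$'s red edges to $Z$ is the indispensable missing step.
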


\begin{proof}
Suppose $H^R$ is bipartite with $H_1$ and $H_2$ the parts of the bipartition. As $G$ does not contain blue copies of $C_{2j}$, each part of the bipartition has size less than $2j$. Without loss of generality, suppose that $|H_1| \leq |H_2|$. As $|H| \ge 3j$, this implies that $|H_2| \ge j+1$.

Note that $E(H_1, H_2)$ cannot have two disjoint blue edges, as that would form blue cycles of every size between $4$ and $|H|$, including $2j$. So the blue edges in $E(H_1, H_2)$ form a (possibly empty) star. Let $x$ be the center of this star, so $E(H_1 - x, H_2 - x)$ only contains red edges.

We claim that we can choose $x$ such that $H_2 - x$ has at least $j+1$ vertices. If $|H_2| \ge j+2$ then this is obvious. If $|H_2| = j+1$, using that $|H| \ge 3j$ we deduce that $|H_1| = 2j-1$. If there is a vertex in $H_2$ with two blue neighbors in $H_1$, they would form a blue copy of $C_{2j}$, which is impossible. So, without loss of generality, in this case we can consider that $x$, the center of the blue star in $E(H_1, H_2)$, is in $H_1$. So, in every case, $|H_2 - x| \ge j+1$.

Let $Z = N^R(w)$, the red neighbors of $w$ (recall that $H = N^B(w)$). We now show that $E(H_1-x,Z)$ or $E(H_2-x,Z)$ only contains blue edges. Otherwise, there exist two red edges $x_1 z_1$ and $x_2 z_2$ with $x_1 \in H_1 - x$ and $x_2 \in H_2 - x$ such that $z_1, z_2 \in Z$. If $z_1 = z_2$, then $x_1 z_1 x_2$ is a red path of length 2 outside $H$. As $E(H_1 - x, H_2 - x)$ only contains red edges, we easily find a red copy of $C_{2k+1}$ using the previously mentioned path, which is impossible. If $z_1 \neq z_2$, then $x_1 z_1 w z_2 x_2$ is a red $(x_1,x_2)$-path of size 4 and we can find a red $(2k+1)$-cycle similarly as before. Let $i \in \{1,2\}$ such that $E(H_i-x,Z)$ only contains blue edges.

We have that $|H_i - x| \ge j+1$. Notice that every vertex $y \in H_i - x$ has every other vertex in $H_i - x$ as blue neighbors, as well as every vertex in $Z$ and $\{w\}$. So, $( H_i - x, H_i - x \cup \{w\} \cup Z )$ is a blue hedgehog with $|H_i - x| \ge j+1$ and $|H_i - x \cup \{w\} \cup Z| \ge |G| - |H_{3-i}| - 1 \ge 2j+1$. So, selecting an arbitrary vertex $y \in H_i - x$ as the hub, we can find a blue copy of $W_{2j}$, a contradiction.
\end{proof}

\begin{lemma} \label{lemma:hbweaklypancyclic}
$H^B$ is weakly pancyclic and contains a triangle.
\end{lemma}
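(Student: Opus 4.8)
The plan is to apply Brandt's theorem (\Cref{lemma:brandt}) to the blue graph $H^B$. We already know from \Cref{lemma:hbnobiparito} that $H^B$ is non-bipartite, so it suffices to verify the edge-count hypothesis $\Vert H^B \Vert > (|H|-1)^2/4 + 1$. Writing $n := |H|$ and using $\Vert H^R \Vert + \Vert H^B \Vert = \binom{n}{2}$, this is equivalent to an upper bound on the number of red edges, namely $\Vert H^R \Vert < (n^2-1)/4 - 1$. So the whole lemma reduces to showing that $H^R$ cannot have too many edges.

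First I would bound $\Vert H^R \Vert$ by applying \Cref{lemma:brandt} contrapositively to the red graph. Suppose, towards a contradiction, that $\Vert H^R \Vert > (n-1)^2/4 + 1$. Since $H^R$ is non-bipartite by \Cref{lemma:hrnobiparito}, \Cref{lemma:brandt} would imply that $H^R$ is weakly pancyclic and contains a triangle; in particular $g(H^R) = 3$. By \Cref{lemma:hrcircunferencia} we have $c(H^R) > 2k+1$, i.e.\ $c(H^R) \ge 2k+2$. Weak pancyclicity then yields a cycle of every length between $3$ and $c(H^R) \ge 2k+2$, so in particular a red $C_{2k+1} \subseteq H^R \subseteq G^R$, contradicting the assumption that $G^R$ is $C_{2k+1}$-free. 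Hence $\Vert H^R \Vert \le (n-1)^2/4 + 1$.

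Consequently $\Vert H^B \Vert = \binom{n}{2} - \Vert H^R \Vert \ge \frac{n(n-1)}{2} - \frac{(n-1)^2}{4} - 1 = \frac{n^2-1}{4} - 1$. The target inequality $\frac{n^2-1}{4} - 1 > \frac{(n-1)^2}{4} + 1$ simplifies to $n > 5$, which holds because $n = |H| \ge 3j \ge 12$ (recall $2 < k < j$, so $j \ge 4$). Thus $\Vert H^B \Vert > (n-1)^2/4 + 1$, and \Cref{lemma:brandt} applied to the non-bipartite graph $H^B$ gives that $H^B$ is weakly pancyclic and contains a triangle, as desired.

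The main idea — and the only genuinely nonroutine step — is the symmetric double use of \Cref{lemma:brandt}: applying its contrapositive to $H^R$ (whose large circumference, guaranteed by \Cref{lemma:hrcircunferencia}, would force the forbidden red $C_{2k+1}$ if it had an edge surplus) caps the red edges, which in turn transfers the edge surplus to $H^B$, where the direct statement of the theorem then applies. The intervening arithmetic is routine once one notices that the constant $(n-1)^2/4 + 1$ leaves a gap of order $n$ between the red upper bound and the blue lower bound.
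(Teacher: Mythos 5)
Your proof is correct and takes essentially the same route as the paper: both arguments combine the non-bipartiteness lemmas (\Cref{lemma:hbnobiparito}, \Cref{lemma:hrnobiparito}) and the circumference bound \Cref{lemma:hrcircunferencia} with Brandt's theorem (\Cref{lemma:brandt}) to rule out the red side and land the edge surplus on $H^B$. The only difference is presentational: the paper uses a pigeonhole (one of the two color classes must have more than $(|H|-1)^2/4+1$ edges) where you derive an explicit cap $\Vert H^R \Vert \leq (|H|-1)^2/4+1$ and transfer the surplus to the blue graph by direct counting.
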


\begin{proof}
Note that there are more than $(|H|-1)^2/4 + 1$ red edges in $H$, or more than $(|H|-1)^2/4 + 1$ blue edges in $H$. By \Cref{lemma:hbnobiparito} and \Cref{lemma:hrnobiparito}, neither $H^R$ nor $H^B$ are bipartite graphs. This, along with \Cref{lemma:brandt} implies that one of the graphs $H^R$ or $H^B$ is weakly pancyclic and contains a triangle. If $H^R$ is weakly pancyclic and contains a triangle, \Cref{lemma:hrcircunferencia} implies that $H^R$ contains a $(2k+1)$-cycle, a contradiction. This proves the lemma.
\end{proof}

\begin{lemma} \label{lemma:gradominimohb}
$\delta^B(H) \ge j$.
\end{lemma}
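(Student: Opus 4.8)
The plan is to argue by contradiction within the standing assumptions that $G^R$ contains no $C_{2k+1}$ and $G^B$ contains no $W_{2j}$. Suppose some vertex $v \in H$ satisfies $\deg^B_H(v) \le j-1$. Since $G$ is complete, $v$ is joined inside $H$ to $|H| - 1 - \deg^B_H(v) \ge |H| - j$ red neighbors; as $|H| \ge 3j$ by \Cref{lemma:highbluedegree}, $v$ has at least $2j$ red neighbors in $H$, and at most $j-1$ blue neighbors there. The goal is to exhibit a red $C_{2k+1}$ through $v$, contradicting the assumption on $G^R$.

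First I would produce a red $(2j)$-cycle that avoids $v$. The induced graph $H-v$ is complete on $|H|-1 \ge 3j-1 = r(C_{2j},C_{2j})$ vertices (\Cref{lemma:ramseyforcicles}), so it contains a monochromatic $C_{2j}$. A blue such cycle, together with the hub $w$ (which is blue-adjacent to all of $H \supseteq V(H-v)$ and lies outside $H$), would form a blue $W_{2j}$, which is impossible; hence there is a red cycle $D$ of length $2j$ with $v \notin V(D)$.

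The main step is then a short counting argument on $D$. Because $v$ has at most $j-1$ blue neighbors in $H$ and $V(D) \subseteq H$, at most $j-1$ vertices of $D$ are blue-adjacent to $v$, so at least $j+1$ of the $2j$ vertices of $D$ are red-adjacent to $v$. Fixing an orientation of $D$, consider the $2j$ arcs of $D$ of length $2k-1$ (one beginning at each vertex); this is a legitimate length since $2k-1 < 2j$. Each vertex of $D$ that is a blue neighbor of $v$ serves as an endpoint of at most two such arcs, so at most $2(j-1) < 2j$ of the arcs have an endpoint blue-adjacent to $v$. Consequently some arc of length $2k-1$ has both endpoints red-adjacent to $v$; adding $v$ and its two red edges to those endpoints closes a red cycle on $2k+1$ vertices, all of whose edges are red and none of which repeats $v$ (as $v \notin V(D)$). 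This red $C_{2k+1}$ is the desired contradiction, so $\delta^B(H) \ge j$.

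I expect the only real obstacle to be guaranteeing that the red cycle supplying the arc does not itself pass through $v$; deleting $v$ before invoking the cycle Ramsey number $r(C_{2j},C_{2j}) = 3j-1$ removes this difficulty cleanly, after which the arc-counting pins down a sub-path of exactly the length needed to complete $C_{2k+1}$.
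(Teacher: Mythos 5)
Your proof is correct and takes essentially the same route as the paper: both delete $v$, use $r(C_{2j},C_{2j}) = 3j-1$ (\Cref{lemma:ramseyforcicles}) to extract a red $C_{2j}$ avoiding $v$ (a blue one would yield a blue $W_{2j}$ with hub $w$), and then exploit the fact that $v$ has at most $j-1$ blue neighbors in $H$ to find two red neighbors of $v$ at distance exactly $2k-1$ along that cycle, closing a red $C_{2k+1}$. Your arc-counting argument is simply a more explicit rendering of the paper's pigeonhole step, and your care in finding the cycle inside $H-v$ makes rigorous a point the paper leaves implicit.
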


\begin{proof}
Suppose not. Then there is a vertex $v \in H$ with less than $j$ neighbors in $H$. As $|H| \ge 3j$, we have that $|H - v| \ge 3j-1 = r(C_{2j})$. As $H$ cannot contain a blue copy of $C_{2j}$, this means that we have a red copy of $C_{2j}$ in $H$. Let $C$ be such a cycle and $V(C) = \{ v_0, v_1, \dotsc, v_{2j-1} \}$ be the set of its vertices . The vertex $v$ has less than $j$ blue neighbors in $H$, in particular, more than half of the vertices in $V(C)$ are red neighbors of $v$. Thus, by the pigeonhole principle, there are two neighbors $x, y$ of $v$ $V(C)$ at distance $2k-1$ in $C$. Joining the red path between $x$ and $y$ with $v$ we obtain a red copy of $C_{2k+1}$, a contradiction.
\end{proof}

\begin{lemma} \label{lemma:hbno2conexo}
$H^B$ is not $2$-connected.
\end{lemma}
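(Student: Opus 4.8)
The plan is to argue by contradiction: I would suppose that $H^B$ is $2$-connected and then produce a blue copy of $C_{2j}$ inside $H = N^B(w)$, which is forbidden since adjoining the hub $w$ (blue-adjacent to all of $H$) would create a blue $W_{2j}$, contradicting the standing assumption.

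First, I would invoke Dirac's circumference bound (\Cref{lemma:dirac}) for the $2$-connected graph $H^B$, obtaining $c(H^B) \ge \min\{2\delta^B(H), |H|\}$. By \Cref{lemma:gradominimohb} we have $\delta^B(H) \ge j$, so $2\delta^B(H) \ge 2j$, and by \Cref{lemma:highbluedegree} we have $|H| \ge 3j \ge 2j$; combining these yields $c(H^B) \ge 2j$.

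Next, I would use the structural information already established for $H^B$. By \Cref{lemma:hbweaklypancyclic}, $H^B$ is weakly pancyclic and contains a triangle, so its girth is exactly $3$. Weak pancyclicity therefore guarantees that $H^B$ contains a cycle of every length between $3$ and $c(H^B)$. Since $c(H^B) \ge 2j$, in particular $H^B$ contains a blue $C_{2j}$.

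Adjoining the vertex $w$, which sends a blue edge to every vertex of $H$ by definition of $H = N^B(w)$, to this blue $C_{2j}$ produces a blue $W_{2j}$ in $G$, the contradiction we sought. This completes the argument, so $H^B$ cannot be $2$-connected. There is no serious obstacle here; the only point to get right is the chain of inequalities ensuring $c(H^B) \ge 2j$, so that weak pancyclicity delivers a cycle of exactly the forbidden even length $2j$ rather than merely some long cycle.
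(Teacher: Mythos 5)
Your proposal is correct and follows essentially the same route as the paper: Dirac's bound (\Cref{lemma:dirac}) combined with $\delta^B(H) \ge j$ from \Cref{lemma:gradominimohb} gives $c(H^B) \ge 2j$, then weak pancyclicity with a triangle (\Cref{lemma:hbweaklypancyclic}) yields a blue $C_{2j}$, which together with the hub $w$ forms the forbidden blue $W_{2j}$. You merely spell out the inequality chain (including $|H| \ge 3j$) that the paper leaves implicit.
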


\begin{proof}
If $H^B$ were $2$-connected, then by \Cref{lemma:dirac}, and by \Cref{lemma:gradominimohb}, $c(H^B) \ge 2j$. So, by \Cref{lemma:hbweaklypancyclic}, $H^B$ contains a $2j$-cycle, that together with the vertex $w$ becomes a blue copy of $W_{2j}$, a contradiction.
\end{proof}

We are now ready to prove \Cref{theorem:secondaryresult}. 

\begin{proof}[Proof of \Cref{theorem:secondaryresult}]
As $H^B$ is not $2$-connected by \Cref{lemma:hbno2conexo}, there exists a $S \subseteq H$ with $|S| \leq 1$ such that $H^B - S$ is disconnected. By \Cref{lemma:gradominimohb} we have that $\delta^B(H-S) \ge k$ and so, every connected component in $H^B - S$ has size at least $k+1$. We must have exactly two connected components in $H^B - S$; if we had more than two then we could find a red copy of $C_{2k+1}$ in $H$. Let $C_1, C_2$ be the sets of vertices of the connected components in $H^B - S$. Then $\{C_1, C_2, S\}$ form a partition of $H$ (with $S$ possibly empty).

By \Cref{lemma:pancyclicalmostbipartite} we can easily see that $C_1$ and $C_2$ induce complete blue subgraphs. If $S$ is empty, then $V(H) = C_1 \cup C_2$ and so, $H^R$ would be a bipartite graph, in contradiction with \Cref{lemma:hrnobiparito}. So, the vertex $s \in S$ must have red neighbors both in $C_1$ and $C_2$, and we find a red copy of $C_{2k+1}$ in $H$, giving the final contradiction.
\end{proof}

\begin{footnotesize}
\bibliographystyle{alpha}
\bibliography{biblo}
\end{footnotesize}

\end{document}